\title[Hall algebras]{Hall algebras in the derived category and 
higher rank DT invariants}
\date{}
\author{Yukinobu Toda}
\DeclareFontFamily{U}{rsfs}{%
\skewchar\font127}
\DeclareFontShape{U}{rsfs}{m}{n}{%
<-6>rsfs5<6-8.5>rsfs7<8.5->rsfs10}{}
\DeclareSymbolFont{rsfs}{U}{rsfs}{m}{n}
\DeclareRobustCommand*\rsfs{%
\@fontswitch\relax\mathrsfs}
\theoremstyle{plain}
\newtheorem{thm}{Theorem}[section]
\newtheorem{prop}[thm]{Proposition}
\newtheorem{lem}[thm]{Lemma}
\newtheorem{defi}[thm]{Definition}
\newtheorem{rmk}[thm]{Remark}
\newtheorem{cor}[thm]{Corollary}
\newtheorem{prop-defi}[thm]{Proposition-Definition}
\newtheorem{thm-defi}[thm]{Theorem-Definition}
\newtheorem{lem-defi}[thm]{Lemma-Definition}
\newtheorem{exam}[thm]{Example}
\newdimen\argwidth
\def\db[#1\db]{
 \setbox0=\hbox{$#1$}\argwidth=\wd0
 \setbox0=\hbox{$\left[\box0\right]$}
  \advance\argwidth by -\wd0
 \left[\kern.3\argwidth\box0 \kern.3\argwidth\right]}
\newcommand{\aA}{\mathcal{A}}
\newcommand{\bB}{\mathcal{B}}
\newcommand{\cC}{\mathcal{C}}
\newcommand{\dD}{\mathcal{D}}
\newcommand{\eE}{\mathcal{E}}
\newcommand{\fF}{\mathcal{F}}
\newcommand{\hH}{\mathcal{H}}
\newcommand{\lL}{\mathcal{L}}
\newcommand{\mM}{\mathcal{M}}
\newcommand{\oO}{\mathcal{O}}
\newcommand{\sS}{\mathcal{S}}
\newcommand{\tT}{\mathcal{T}}
\newcommand{\uU}{\mathcal{U}}
\newcommand{\xX}{\mathcal{X}}
\newcommand{\yY}{\mathcal{Y}}
\newcommand{\Supp}{\mathop{\rm Supp}\nolimits}
\newcommand{\Hom}{\mathop{\rm Hom}\nolimits}
\newcommand{\dR}{\mathbf{R}}
\newcommand{\ch}{\mathop{\rm ch}\nolimits}
\newcommand{\Ext}{\mathop{\rm Ext}\nolimits}
\newcommand{\Spec}{\mathop{\rm Spec}\nolimits}
\newcommand{\rank}{\mathop{\rm rank}\nolimits}
\newcommand{\Coh}{\mathop{\rm Coh}\nolimits}
\newcommand{\cneq}{\mathrel{\raise.095ex\hbox{:}\mkern-4.2mu=}}
\newcommand{\eqcn}{\mathrel{=\mkern-4.5mu\raise.095ex\hbox{:}}}
\newcommand{\ext}{\mathop{\rm ext}\nolimits}
\newcommand{\Aut}{\mathop{\rm Aut}\nolimits}
\newcommand{\DT}{\mathop{\rm DT}\nolimits}
\newcommand{\PT}{\mathop{\rm PT}\nolimits}
\newcommand{\Imm}{\mathop{\rm Im}\nolimits}
\newcommand{\GL}{\mathop{\rm GL}\nolimits}
\begin{document}
\maketitle

\begin{abstract}
We remark that 
the combination of the 
works of~Ben-Bassat-Brav-Bussi-Joyce
and Alper-Hall-Rydh
imply the conjectured 
local description 
of the moduli stacks of semi-Schur objects
in the derived category of coherent sheaves on 
projective Calabi-Yau 3-folds. 
This result was 
assumed in the author's previous papers
to apply wall-crossing formulas of 
DT type invariants in the derived category, 
e.g. DT/PT correspondence, rationality, 
etc. 
We also show that the above result 
is applied to prove the higher rank version 
of DT/PT correspondence and rationality.  
\end{abstract}

\section{Introduction}
\subsection{Moduli 
stacks of semi-Schur objects on Calabi-Yau 3-folds}
The Donaldson-Thomas (DT for short) invariants
were introduced by Thomas~\cite{Thom}
as holomorphic analogue of Casson invariants
on real 3-manifolds. 
They count stable 
sheaves on Calabi-Yau 3-folds, and their rank 
one theory is conjectured (and proved in many cases~\cite{PP})
to be 
related to Gromov-Witten invariants~\cite{MNOP}. 
On the other hand, Joyce-Song~\cite{JS} 
and Kontsevich-Soibelman~\cite{K-S}
introduced generalized DT invariants so that 
they also count strictly semistable 
sheaves, and proved their wall-crossing formula. 
It has been expected that 
the wall-crossing formula is 
also applied for DT type invariants counting 
Bridgeland semistable objects~\cite{Brs1}, 
or weak semistable objects~\cite{Tcurve1}, 
in the derived category of coherent sheaves. 
Although almost all the
technical details are parallel with~\cite{JS}, 
there has been a technical 
issue, 
that is a certain local description of the moduli 
stack of semistable objects in the derived
category. 
That technical issue was assumed in the author's previous 
papers~\cite{Tcurve1}, \cite{Tcurve2}, \cite{Tcurve3}, 
\cite{Tsurvey}, \cite{TodBG}, \cite{Tcurve4}, 
so that the proofs of their main formulas 
on DT invariants 
were not mathematically rigorous
except their Euler characteristic version\footnote{The 
Euler characteristic version means the 
formula for the naive Euler characteristics 
of the moduli spaces of sheaves without the weight 
by the Behrend function.}.  

The first purpose of this 
note is to remark that 
the above technical issue
is now settled,
just 
by combining the  
works of~Ben-Bassat-Brav-Bussi-Joyce~\cite{BBBJ}
and Alper-Hall-Rydh~\cite{AHR}. 
As this fact was not 
explicitly mentioned in the literatures, 
we point it out  
in this article.  
The following statement was 
formulated as a conjecture in~\cite[Conjecture~1.2]{Tcurve2}.
\begin{thm}
\emph{(Ben-Bassat-Brav-Bussi-Joyce~\cite{BBBJ}, Alper-Hall-Rydh~\cite{AHR}, Theorem~\ref{thm:alper})}
\label{thm:critical}
Let $X$ be a smooth projective Calabi-Yau 3-fold 
over $\mathbb{C}$
and $\mM$ be the moduli 
stack of semi-Schur objects 
$E \in D^b \Coh(X)$, i.e. 
they satisfy 
$\Ext^{<0}(E, E)=0$. 
For $[E] \in \mM$, let 
$G$ be a maximal reductive 
subgroup of $\Aut(E)$.
Then there is a $G$-invariant analytic 
open neighborhood $V$ of $0$ in 
$\Ext^1(E, E)$, 
a $G$-invariant holomorphic function $f\colon V\to \mathbb{C}$
with $f(0)=df|_{0}=0$, and a smooth morphism 
of complex analytic stacks
\begin{align*}
\Phi \colon ([\{df=0\}/G], 0) \to (\mM, [E])
\end{align*}
of relative dimension $\dim \Aut(E)- \dim G$. 
\end{thm}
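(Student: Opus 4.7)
The plan is to combine two recent structural results: on one hand, the Darboux-type local models for $(-1)$-shifted symplectic derived stacks established by Ben-Bassat-Brav-Bussi-Joyce~\cite{BBBJ}, and on the other hand, the local structure theorem for algebraic stacks with reductive stabilizers of Alper-Hall-Rydh~\cite{AHR}. Neither result alone produces the desired chart on $\Ext^1(E,E)$ carrying the correct $G$-action; the point is that they fit together naturally.

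The first step is to recall that the semi-Schur condition $\Ext^{<0}(E,E)=0$ ensures that the derived enhancement of $\mM$ near $[E]$ is $1$-truncated, so $\mM$ is an algebraic stack locally of finite type. By Pantev-Toën-Vaquié, its derived enhancement carries a $(-1)$-shifted symplectic structure induced by the Calabi-Yau trivialization $\omega_{X}\cong\oO_X$, and the stabilizer at $[E]$ is $\Aut(E)$. I would then apply the Alper-Hall-Rydh local structure theorem at $[E]$: since $G\subseteq \Aut(E)$ is reductive, there exists a pointed affine $G$-scheme $(U,u_0)$ and a smooth morphism $[U/G]\to \mM$ sending the orbit of $u_0$ to $[E]$, whose relative dimension at $u_0$ equals $\dim\Aut(E)-\dim G$, the contribution of the unipotent radical. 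The $G$-equivariant Zariski tangent space $T_{u_0}U$ is naturally identified with $\Ext^1(E,E)$.

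The second step is to apply the BBBJ Darboux theorem to the chart $[U/G]$. Since $\mM$ is $(-1)$-shifted symplectic and $[U/G]\to \mM$ is smooth, $[U/G]$ inherits a compatible local Lagrangian/symplectic datum, and BBBJ guarantees, in the complex analytic category, a $G$-equivariant isomorphism near $u_0$ between $U$ and the classical critical locus $\{df=0\}\subseteq V$ of a $G$-invariant holomorphic function $f$ on a $G$-invariant analytic open neighborhood $V$ of $0\in \Ext^1(E,E)$, with $f(0)=df|_0=0$. Composing gives the required smooth morphism
\[
\Phi\colon ([\{df=0\}/G], 0)\to (\mM, [E])
\]
of relative dimension $\dim\Aut(E)-\dim G$.

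The main obstacle I anticipate is the compatibility of the two inputs. The AHR theorem produces an algebraic quotient chart, whereas BBBJ produces Darboux coordinates in the analytic topology; one must verify that the Darboux model can be arranged to respect the $G$-action supplied by AHR, so that both $V$ and $f$ are genuinely $G$-equivariant/invariant, and that the critical locus presentation descends to quotient stacks. This equivariance is the reason one must pass to a \emph{maximal reductive} subgroup $G\subseteq \Aut(E)$: the averaging argument needed to equivariantize the Darboux chart requires complete reducibility of $G$-representations, which would fail for the full automorphism group when its unipotent radical is nontrivial.
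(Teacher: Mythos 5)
Your overall architecture coincides with the paper's: apply Alper--Hall--Rydh to produce a smooth chart $[S/G]\to\mM$ with $S$ affine, $G$ reductive and $p\in S$ a $G$-fixed point, and then use the Ben-Bassat--Brav--Bussi--Joyce input to realize $S$ as a critical locus. However, there is a genuine gap at the decisive step. You write that ``BBBJ guarantees, in the complex analytic category, a $G$-equivariant isomorphism near $u_0$ between $U$ and the classical critical locus of a $G$-invariant holomorphic function,'' and then immediately flag the equivariance of the Darboux chart as ``the main obstacle I anticipate.'' That obstacle is exactly the content of the theorem: the BBBJ Darboux theorem produces critical charts Zariski-locally on an atlas, with no group action, and for Schur objects this already suffices; the entire point of the statement for strictly semi-Schur objects is to make $f$ invariant under $G$. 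An appeal to ``averaging'' by complete reducibility is the right moral but is not a proof --- one cannot simply average a function whose critical locus must remain identified with $S$, since averaging changes the critical locus.

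The paper fills this gap with a specific mechanism from Joyce's theory of $d$-critical loci, which you do not invoke. Namely: by~\cite[Corollary~3.19]{BBBJ} the stack $\mM$ carries a canonical $d$-critical structure $s$ (this is the classical shadow of the $(-1)$-shifted symplectic structure); by~\cite[Proposition~2.8]{JoyceD} the pullback $\Phi^{*}s$ along the smooth AHR chart is a $d$-critical structure on $[S/G]$; by~\cite[Example~2.55]{JoyceD} such a structure is the same thing as a $G$-invariant $d$-critical structure on the affine scheme $S$; and finally~\cite[Proposition~2.43, Remark~2.47]{JoyceD} assert that for a reductive group acting on an affine $d$-critical locus with a fixed point, one can find a $G$-invariant critical chart $(S,U,f,i)$ with $\dim U=\dim T_pS$. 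This last existence theorem is where reductivity, affineness of $S$, and the fixed point are actually used, and it is the ingredient your proposal is missing. Note also that this route yields the stronger algebraic statement (a $G$-invariant \emph{regular} function on a smooth affine $U$ together with a $G$-equivariant \'etale map $u\colon U\to\Ext^1(E,E)$, Theorem~\ref{thm:alg}), from which the analytic statement of Theorem~\ref{thm:critical} follows; your formulation of the BBBJ input as intrinsically analytic is not needed and slightly misstates what they prove.
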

The above result was first proved 
when $E \in \Coh(X)$ by Joyce-Song~\cite{JS}
using gauge theory.   
In general, by the work of 
Pantev-To$\ddot{\textrm{e}}$n-Vaquie-Vezzosi~\cite{PTVV}, 
the stack $\mM$
is the truncation of 
a smooth derived stack 
with a $(-1)$-shifted
symplectic structure. Using this fact, 
Ben-Bassat-Brav-Bussi-Joyce~\cite{BBBJ}
showed that $\mM$ has Zariski locally 
an atlas which is written as a critical 
locus of a certain algebraic function. 
In particular, Theorem~\ref{thm:critical} 
was proved by them  
for a Schur object $[E] \in \mM$, 
i.e. $\Hom(E, E)=\mathbb{C}$. 
For a strictly semi-Schur object $[E]\in \mM$, 
the remaining 
issue has been whether we can take 
the function $f$ to be invariant under
the $G$-action. 
This issue was addressed by Bussi~\cite{Bussi}, 
where she 
showed a
result similar to Theorem~\ref{thm:critical}
under the assumption 
that $\mM$ is Zariski locally 
written as a quotient stack of the form 
$[S/\GL_n(\mathbb{C})]$
for a quasi projective variety $S$. 
Still it is not known whether 
the last assumption holds for $\mM$ or not. 
However the work of 
Alper-Hall-Rydh~\cite{AHR}
implies that the stack $\mM$ locally 
near $[E] \in \mM$ 
admits
a smooth morphism of the form 
$[S/G] \to \mM$
for an affine variety $S$ with relative dimension 
$\dim \Aut(E)-\dim G$. 
We will see that the result of~\cite{AHR}
is enough to conclude Theorem~\ref{thm:critical}. 

Now given Theorem~\ref{thm:critical},
all of the 
Hall algebra arguments in~\cite{JS}, \cite{BrI}
are applied 
for any heart $\aA$
of a bounded 
t-structure 
in the derived category of coherent sheaves on 
a Calabi-Yau 3-fold $X$. 
In particular, 
following~\cite{BrI} we can 
construct a  
Poisson algebra homomorphism 
from the regular elements of 
the 
motivic Hall algebra of $\aA$
to the Poisson torus, which 
we will review in Subsection~\ref{subsec:poisson}.
Such a statement is relevant to apply the 
wall-crossing formula
in the derived category.

\subsection{Removing assumptions in the previous papers}
The result of Theorem~\ref{thm:critical}
was conjectured and assumed in 
the 
author's 
previous papers. 
Now we can remove 
that assumption
from 
the results in~\cite[Theorem~1.2]{Tcurve1}, 
\cite[Theorem~1.3]{Tcurve2}, \cite[Theorem~1.3]{Tcurve3}, 
\cite[Theorem~3.11]{Tsurvey}, \cite[Theorem~1.5]{TodBG},
 \cite[Theorem~1.2]{Tcurve4}, 
or change the statement of 
Euler characteristic version to 
that on the honest DT invariants. 

For example, let us focus on the 
DT/PT correspondence in~\cite{Tcurve1}. 
Let $X$ be a smooth projective 
Calabi-Yau 3-fold and 
take 
\begin{align*}
\beta \in H_2(X, \mathbb{Z}), \ n\in \mathbb{Z}. 
\end{align*}
Associated to the above data, 
we have two kinds of curve counting invariants
\begin{align*}
I_{n, \beta} \in \mathbb{Z}, \ 
P_{n, \beta} \in \mathbb{Z}. 
\end{align*}
The invariant $I_{n, \beta}$ 
is the rank one DT invariant~\cite{MNOP}, 
which 
virtually 
counts subschemes $C \subset X$ with 
$\dim C \le 1$, $[C]=\beta$, $\chi(\oO_C)=n$. 
On the other hand, 
$P_{n, \beta}$ 
is the 
\textit{Pandharipande-Thomas (PT for short) stable pair
invariants}~\cite{PT}, which virtually 
counts pairs $(F, s)$ where 
$F$ is a pure one dimensional sheaf with $[F]=\beta$, 
$\chi(F)=n$
and $s \colon \oO_X \to F$ is a morphism 
which is surjective in dimension one.
The above two invariants are known to be related by
\begin{align}\label{DT/PT}
\sum_{n\in \mathbb{Z}}I_{n, \beta} q^n=
M(-q)^{e(X)} \cdot \sum_{n \in \mathbb{Z}} P_{n, \beta}q^n. 
\end{align} 
Here $M(q)$ is the MacMahon function
\begin{align*}
M(q)=\prod_{k\ge 1}(1-q^k)^{-k}. 
\end{align*}
The formula (\ref{DT/PT}) was conjectured in~\cite{PT}, 
its Euler characteristic version was proved in~\cite{Tcurve1}, \cite{StTh}, 
and finally proved by Bridgeland~\cite{BrH}.
As we remarked in~\cite[Appendix, arXiv version]{Tcurve1}, 
the argument of~\cite{Tcurve1} also proves the formula (\ref{DT/PT})
if we knew Theorem~\ref{thm:critical}.  
So we can now prove the formula (\ref{DT/PT}) 
along with the argument of~\cite{Tcurve1}
without 
any assumption. 

In any case, the formula (\ref{DT/PT}) was 
proved by Bridgeland~\cite{BrH} without using Theorem~\ref{thm:critical}, 
so Theorem~\ref{thm:critical} is not 
essential in 
proving (\ref{DT/PT}). 
On the other hand, it seems that 
Theorem~\ref{thm:critical} is essential 
in proving the higher rank version of the formula (\ref{DT/PT}), 
which we discuss in the next subsection. 
Before this, let us discuss the 
difference of the arguments in~\cite{Tcurve1} and~\cite{BrH}.  
In~\cite{Tcurve1}, we
regarded
a subscheme $C \subset X$ as an ideal sheaf $I_C$, 
and a stable pair as a two term complex $(\oO_X \stackrel{s}{\to} F)$. 
As they are rank one objects in the derived category, 
we used the Hall algebra of the heart of some t-structure 
to show the Euler characteristic version of (\ref{DT/PT}). 
On the other hand, in~\cite{BrH}
a subscheme and a stable pair 
were regarded as coherent systems, 
which are one dimensional sheaves together with 
sections. 
The latter viewpoint 
has advantage in the point that everything 
can be worked out in the Hall algebra of 
one dimensional sheaves. 
So the result of Theorem~\ref{thm:critical} for 
coherent sheaves, which was already shown by~\cite{JS}, 
was enough to prove the formula (\ref{DT/PT}). 

However, the above 
interpretation of a sheaf as a coherent 
system is only possible for
a rank one object. 
A higher rank stable sheaf 
is not necessary regarded 
as data $W \otimes \oO_X \to F$ for some finite dimensional 
vector space $W$
and 
a one dimensional sheaf $F$. 
So it is not obvious how to study the higher rank 
DT invariants using Hall algebras of one dimensional 
sheaves as in~\cite{BrH}.

\subsection{Wall-crossing formula for higher rank objects}
The second purpose of this article 
is to  
study higher rank DT invariants, 
for example giving a higher rank analogue of 
the formula (\ref{DT/PT}).  
Here 
we emphasize that, contrary to the rank one case, 
 Theorem~\ref{thm:critical} 
is essential to give a rigorous proof.  
Let us take an ample divisor $\omega$ on $X$ and 
an element
\begin{align}\label{Chern}
(r, D, -\beta, -n) \in H^0(X) \oplus H^2(X) \oplus H^4(X) \oplus H^6(X)
\end{align}
such that $r\ge 1$
and $(r, D \cdot \omega^2)$ coprime. 
Let
\begin{align}\label{intro:D}
\DT(r, D, -\beta, -n) \in \mathbb{Z}
\end{align}
be the DT invariant which virtually 
counts 
$\omega$-slope stable sheaves $E \in \Coh(X)$
whose Chern character coincides 
with (\ref{Chern}). 

We define the notion of a
\textit{PT stable object} as
an object
\begin{align}\label{higherPT}
I^{\bullet} \in D^b \Coh(X)
\end{align}
such that $\hH^i(I^{\bullet})=0$ for $i \neq 0, 1$, 
$\hH^0(I^{\bullet})$ is a
$\omega$-slope stable 
sheaf and $\hH^1(I^{\bullet})$ is zero dimensional.  
This notion appeared in Jason Lo's work~\cite{JLo}
in describing certain polynomial stable objects~\cite{Bay}. 
A PT stable object is a PT stable pair in the rank one case, but 
it is not necessary written 
as a reasonable pair in a higher rank case
(cf.~Subsection~\ref{subsec:PT}). 
We can define the invariant
\begin{align}\label{intro:P}
\PT(r, D, -\beta, -n) \in \mathbb{Z}
\end{align}
which virtually 
counts objects (\ref{higherPT})
whose Chern character is (\ref{Chern}). 
When $(r, D)=(1, 0)$, the 
invariants (\ref{intro:D}), (\ref{intro:P})
coincide with $I_{n, \beta}$, $P_{n, \beta}$ respectively. 
The following is 
a higher rank analogue of the formula (\ref{DT/PT}):  
\begin{thm}\label{thm:higherDTPT}\emph{(Theorem~\ref{thm:DTPTseries})}
For a fixed $(r, D, \beta)$, 
we have the following formula: 
\begin{align}\notag
&\sum_{6n \in \mathbb{Z}} \DT(r, D, -\beta, -n)q^n \\
& \qquad \label{hDTPT}
=M((-1)^r q)^{r \cdot e(X)} \cdot  
\sum_{6n \in \mathbb{Z}} \PT(r, D, -\beta, -n)q^n. 
\end{align}
\end{thm}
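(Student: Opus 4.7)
The plan is to follow the strategy of~\cite{Tcurve1} for the rank one DT/PT correspondence, now fully justified by Theorem~\ref{thm:critical}, and adapt it to the higher rank setting where objects can no longer be reinterpreted as coherent systems in the sense of~\cite{BrH}. First, I would construct a heart $\aA \subset D^b\Coh(X)$ of a bounded t-structure obtained by tilting $\Coh(X)$ at the torsion pair whose torsion part is the zero-dimensional sheaves. Both the $\omega$-slope stable sheaves underlying the DT invariants (after an appropriate shift) and the PT stable objects~(\ref{higherPT}) then lie in $\aA$ with a common Chern character, the PT stable objects being distinguished by having their zero-dimensional part in cohomological degree one.

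Second, I would introduce a one-parameter family of weak stability conditions $\sigma_t$ on $\aA$ interpolating between a stability for which the $\sigma_t$-stable objects in the relevant class are exactly the PT stable objects and one for which they are the shifted $\omega$-slope stable sheaves counted by $\DT(r, D, -\beta, -n)$. The walls in $t$ correspond to strict short exact sequences in $\aA$ whose destabilizing sub- or quotient object is a shift of a zero-dimensional sheaf: in the higher rank setting, the coprimality of $(r, D\cdot \omega^2)$ together with $\omega$-slope stability rules out any other destabilization. At each wall, one writes the resulting identity in the motivic Hall algebra of $\aA$ via the universal wall-crossing machinery of~\cite{JS, BrI}.

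Third, I would apply the integration map from the regular subalgebra of the motivic Hall algebra to the Poisson torus reviewed in Subsection~\ref{subsec:poisson}. This is the step which, in the higher rank case, genuinely requires Theorem~\ref{thm:critical}: the $G$-invariant critical chart description of $\mM$ is exactly what produces the Behrend function identities needed to define the integration map, and no reduction to Hall algebras of one-dimensional sheaves (as in~\cite{BrH}) is available. Summing the resulting numerical wall-crossing identities yields~(\ref{hDTPT}), with the factor $M((-1)^r q)^{r\cdot e(X)}$ arising from the known Behrend-weighted generating series of Hilbert schemes of points on $X$, raised to the rank $r$ and twisted by the sign $(-1)^r$ coming from the Euler pairing between a rank $r$ sheaf and a point class.

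The main obstacle is verifying that no exotic walls occur between the DT and PT chambers and tracking the combinatorics of the wall-crossing infinite sum in the higher rank setup. One must carefully choose the family $\sigma_t$ so that the destabilizing objects at every wall are zero-dimensional sheaves in the required degree, and then show that the ensuing sum of wall contributions assembles precisely into the MacMahon factor with the correct exponent and sign dictated by $r$; it is here that the applicability of the integration map in the full derived category, secured by Theorem~\ref{thm:critical}, plays its essential role.
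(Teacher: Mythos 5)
Your proposal reaches the correct conclusion and shares the paper's overall architecture (a common heart containing both DT and PT objects, a motivic Hall algebra identity, the integration map justified by Theorem~\ref{thm:critical}, and the identification of the MacMahon factor by comparison with the degree-zero series $\DT_{(1,0)}(q,0)=M(-q)^{e(X)}$ with the exponent $r$ and sign $(-1)^r$ coming from the Euler pairing $\chi((0,0,0,-n),(r,D,-\beta,-n'))=rn$), but the middle mechanism is genuinely different. You propose a one-parameter family of weak stability conditions $\sigma_t$ interpolating between the DT and PT chambers, with walls where shifts of zero-dimensional sheaves destabilize; this is the strategy of the rank-one argument in~\cite{Tcurve1}, and carrying it out would require constructing the family, proving Harder--Narasimhan existence, local finiteness of walls, and openness and boundedness of the $\sigma_t$-semistable loci in higher rank. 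The paper deliberately avoids all of this: it works in the tilted heart $\aA_{\mu}=\langle \Coh_{\le\mu}(X),\Coh_{>\mu}(X)[-1]\rangle$ and replaces the wall-crossing family by a single nested torsion pair identity (Lemma~\ref{lem:cid}), namely $\langle \Coh_{\mu}(X),\cC_{\infty}\rangle=\langle\cC_{\infty},\Coh^{P}_{\mu}(X)\rangle$, proved by a noetherian induction peeling off subobjects in $\cC_{\infty}$. This identity immediately gives $\delta_{\DT}(r,D)\ast\delta(\cC_{\infty})=\delta(\cC_{\infty})\ast\delta_{\PT}(r,D)$, i.e.\ the DT element is the conjugate of the PT element by $\exp(\epsilon(\cC_{\infty}))$, and a single application of the Baker--Campbell--Hausdorff formula followed by the integration map yields the exponential factor in one step; no analysis of a wall structure or of "exotic walls" is needed, and the coprimality of $(r,D\cdot\omega^{2})$ enters only through the simplicity of the stable objects (Lemma~\ref{lem:gerbe}, Lemma~\ref{lem:aut}). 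Your route buys a picture of the correspondence as a genuine chamber decomposition, at the cost of substantial extra verification in higher rank; the paper's route buys a much shorter and self-contained proof, which is precisely the simplification announced in its introduction.
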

We note that the formula (\ref{DT/PT}) is 
a special case of (\ref{hDTPT}) 
by setting $(r, D)=(1, 0)$.  
Indeed, the proof is essentially the same as in~\cite{Tcurve1}
in the rank one case, where 
we studied the wall-crossing phenomena in 
the category generated by $\oO_X$ and 
one dimensional sheaves shifted by $[-1]$
in the derived category.  
We will also construct a similar 
abelian category and investigate 
a wall-crossing phenomena. 
However, in this article we 
simplified several arguments by 
considering certain nested torsion pairs 
in the abelian category 
rather than studying the weak stability conditions. 
Also as we work with the Hall algebra of a t-structure in 
the derived category (rather than that of one dimensional sheaves
as in~\cite{BrH}), the use of Theorem~\ref{thm:critical} is 
essential for the proof. 

The generating series of PT invariants was also conjectured 
to be a rational function in~\cite{PT}, which 
is required to formulate the PT/GW correspondence. 
The rationality conjecture was proved 
for the Euler characteristic version in~\cite{Tolim2}, 
and finally proved by Bridgeland~\cite{BrH}. 
In the higher rank case, we 
have the following similar rationality statement: 
\begin{thm}\label{intro:rational}\emph{(Corollary~\ref{cor:rat})}
For a fixed $(r, D, \beta)$, 
we can write
\begin{align*}
\sum_{6n \in \mathbb{Z}} \PT(r, D, -\beta, -n)q^n
=F(q) \cdot G(q^{\frac{1}{6}})
\end{align*}
where $F(q)$ is the Laurent expansion of a rational 
function 
of $q$, and 
$G(q^{\frac{1}{6}})$ is a Laurent 
polynomial in $q^{\frac{1}{6}}$ with integer coefficients. 
\end{thm}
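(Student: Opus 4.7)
The plan is to adapt the strategy of Toda~\cite{Tolim2} and Bridgeland~\cite{BrH} for rank-one PT rationality to the higher rank setting, using the Hall algebra arguments that are now rigorous thanks to Theorem~\ref{thm:critical}. The central idea is to introduce a dual class of higher rank PT stable objects, prove an analogue of Theorem~\ref{thm:higherDTPT} relating DT invariants to the dual PT invariants, and then combine the two DT/PT identities with a Serre duality symmetry to extract a functional equation for the PT series.

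Concretely, I would define a \emph{dual higher rank PT stable object} to be a complex $I^{\bullet} \in D^b\Coh(X)$ with $\hH^i(I^{\bullet})=0$ for $i \neq -1, 0$, such that $\hH^0(I^{\bullet})$ is $\omega$-slope stable and $\hH^{-1}(I^{\bullet})$ is zero dimensional. Applying a suitable derived duality, namely a composition of $\mathbf{R}\HOM(-, \oO_X)$ with an appropriate shift, induces a bijection between PT stable objects of Chern character $(r, D, -\beta, -n)$ and dual PT stable objects of Chern character related to it by a linear map in $(D,\beta,n)$. Writing $\PT^{\vee}(r, D, -\beta, -n)$ for the corresponding counting invariants, Serre duality then yields, after re-identifying the relevant Chern data,
\[
\PT(r, D, -\beta, -n) = \PT^{\vee}(r, D, -\beta, a-n),
\]
for a shift $a \in \frac{1}{6}\mathbb{Z}$ depending linearly on $(r, D, \beta)$.

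Next, a wall-crossing argument parallel to that of Theorem~\ref{thm:higherDTPT}, carried out in a tilted heart containing dual PT objects together with shifted zero dimensional sheaves, should yield the analogous identity
\[
\sum_{6n \in \mathbb{Z}} \DT(r, D, -\beta, -n) q^n = M((-1)^r q)^{r \cdot e(X)} \sum_{6n \in \mathbb{Z}} \PT^{\vee}(r, D, -\beta, -n) q^n.
\]
Comparing with Theorem~\ref{thm:higherDTPT} gives the equality of PT and dual PT series, and substituting the Serre duality identity above yields a functional equation of the form $\sum_n \PT(r, D, -\beta, -n) q^n = q^{a} \sum_n \PT(r, D, -\beta, -n) q^{-n}$. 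Since $\PT(r, D, -\beta, -n)$ vanishes for $n \ll 0$, by boundedness of the moduli of $\omega$-slope stable sheaves with fixed $(r, D)$ together with the constraint that $\hH^1$ is zero dimensional, this functional equation combined with the one-sided support bound forces the generating series to be the Laurent expansion of a rational function in $q$, up to a Laurent polynomial factor in $q^{1/6}$ absorbing the fractional shift; the latter becomes the factor $G(q^{1/6})$ in the statement.

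The main obstacle will be establishing the parallel DT/dual PT wall-crossing identity. This requires constructing an abelian category containing the dual PT stable objects, verifying the analogous boundedness and openness properties, setting up the nested torsion pairs in the Hall algebra, and running the integration map from Subsection~\ref{subsec:poisson}, all mirroring the setup of Theorem~\ref{thm:higherDTPT} but with a reversed tilt of the natural t-structure. Once this parallel identity is in place, the rationality becomes a formal consequence of the functional equation and the support bound on $n$. A secondary technical point is that $\ch_3$ takes values in $\frac{1}{6}\mathbb{Z}$ rather than $\mathbb{Z}$ for higher rank sheaves on a Calabi-Yau 3-fold; this fractional normalization is harmless as it enters only through the monomial factor $G(q^{1/6})$.
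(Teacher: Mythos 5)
Your proposal has a genuine gap at its final step, and the route it takes is not the one the paper follows. The decisive problem is the claim that a functional equation of the form $\sum_n \PT(r,D,-\beta,-n)q^n = q^{a}\sum_n \PT(r,D,-\beta,-n)q^{-n}$, combined with the one-sided vanishing $\PT(r,D,-\beta,-n)=0$ for $n\ll 0$, forces the series to be the Laurent expansion of a rational function. This is false: for a formal Laurent series supported in $n\ge n_0$, the right-hand side is supported in $n\le -n_0+a$, so the identity can only hold as an identity of formal series if both sides are Laurent polynomials --- which the PT series is not, already in rank one. The functional equation only makes sense as an identity of rational functions \emph{after} rationality is known; it cannot be the source of rationality. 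A second problem is that the derived dual $\mathbb{D}=\dR\hH om(-,\oO_X)$ sends Chern character $(r,D,-\beta,-n)$ to $(r,-D,-\beta,n)$ (up to the Todd-class corrections), so for $D\neq 0$ your duality relates $\PT_{r,D}$ to a series attached to $(r,-D)$ rather than producing a self-functional-equation; consistently with this, the paper explicitly remarks that in higher rank the factors $F(q)$ and $G(q^{1/6})$ are \emph{not} invariant under $q\leftrightarrow q^{-1}$, so the symmetry you are trying to exploit is simply absent.

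The paper's actual argument avoids functional equations entirely. It introduces the category $\Coh^{L}_{\mu}(X)=\{E\in\dD_{\mu}: \Hom(\cC_{[0,\infty]},E)=0\}$ and proves the torsion-pair identity $\langle \Coh^{P}_{\mu}(X),\cC_{[0,\infty)}\rangle = \langle \cC_{[0,\infty)},\Coh^{L}_{\mu}(X)\rangle$, which via the integration map yields the factorization $\PT_{r,D}(q,t)=\mathrm{L}_{r,D}(q,t)\cdot\exp\bigl(\sum_{n\ge 0,\beta>0}(-1)^{rn+D\beta-1}(rn-D\beta)N_{n,\beta}q^{n}t^{\beta}\bigr)$ (Theorem~\ref{thm:PT=L}). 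Rationality of the exponential factor in $q$ comes from the periodicity $N_{n,\beta}=N_{n+\beta\cdot\omega,\beta}$ obtained by tensoring with a line bundle of class $\omega$ (Lemma~\ref{lem:Nrational}); this is where duality enters, but only through Proposition~\ref{prop:decom} and Corollary~\ref{cor:inc}, which show $\mathbb{D}(\Coh^{L}_{\mu}(X))\subset\dD_{-\mu}$ and hence that $\ch_3$ of an L-object is bounded both above and below, making $\mathrm{L}_{r,D}$ a Laurent polynomial in $q^{1/6}$ (Lemma~\ref{lem:Lpoly}). That Laurent polynomial is the factor $G(q^{1/6})$, and the $N$-contribution is $F(q)$. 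If you want to repair your approach, the essential missing idea is this intermediate class of L-objects whose count is \emph{polynomial} by a two-sided duality bound, together with the periodicity of the $N_{n,\beta}$; neither is recovered by comparing DT/PT with a dual DT/PT identity.
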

Again the proof is essentially same as in~\cite{Tolim2}, 
but the proof is much simplified and 
we use Theorem~\ref{thm:critical} for the rigorous proof. 
Also contrary to the rank one case, the 
rational functions $F(q)$, $G(q^{\frac{1}{6}})$ are not necessary 
invariant under $q \leftrightarrow q^{-1}$, 
$q^{\frac{1}{6}} \leftrightarrow q^{-\frac{1}{6}}$, 
respectively in the higher rank case. 
By combining Theorem~\ref{thm:higherDTPT} and Theorem~\ref{intro:rational}, 
we obtain the formula: 
\begin{align*}
\sum_{6n \in \mathbb{Z}} \DT(r, D, -\beta, -n)q^n
=M((-1)^r q)^{r \cdot e(X)} \cdot F(q) \cdot G(q^{\frac{1}{6}}). 
\end{align*}
The above formula is a new structure result on DT invariants 
which is applied for any positive rank. 

\subsection{Related works}
The result similar to Theorem~\ref{thm:critical} was once 
announced by Behrend-Getzler~\cite{BG}. 
Recently, Jiang~\cite{YJ1} proved the 
Behrend function identities 
given in Theorem~\ref{thm:Beh} 
using 
the cyclic $L_{\infty}$-algebra 
technique and 
the unpublished work
by Behrend-Getzler~\cite{BG}.  

So far, there exist some articles 
in which higher rank analogue of DT theory or PT
theory has been studied~\cite{Trk2}, \cite{Stop}, \cite{Nhig}, 
\cite{CDP}, \cite{Shes}. 
In these articles, all the 
higher rank objects were of the form 
$(W \otimes \oO_X \to F)$, which 
do not cover all of the stable sheaves 
as we already mentioned. 
So our situation is much more general
than the above previous articles. 

It is a natural problem to 
extend the results of 
Theorem~\ref{thm:higherDTPT}
and Theorem~\ref{intro:rational}
to the motivic DT invariants 
introduced by Kontsevich-Soibelman~\cite{K-S}. 
Still there exist some technical issues in 
this extension, 
e.g. the existence of an orientation data, 
but the numbers of issues are decreasing due to 
the recent progress on
the rigorous foundation of motivic 
DT theory (cf.~\cite{BDM}, \cite{LeQu}, \cite{BM}, \cite{YJ2}).

\subsection{Acknowledgement}
This article was written while the author 
was visiting to Massachusetts Institute of Technology
in 2016. 
I am grateful to Davesh Maulik for 
the valuable discussions. 
This work is supported by World Premier 
International Research Center Initiative
(WPI initiative), MEXT, Japan, 
and Grant-in Aid
for Scientific Research grant (No.~26287002)
from the Ministry of Education, Culture,
Sports, Science and Technology, Japan, 
and
JSPS Program for Advancing Strategic International Networks to Accelerate the Circulation of Talented Researchers.

\section{Hall algebras in the derived category}

\subsection{$d$-critical stacks}
We first recall Joyce's notion of $d$-critical stacks
introduced in~\cite{JoyceD}. 
For any algebraic stack $\xX$, 
Joyce constructed a sheaf 
of vector spaces 
$\sS_{\xX}^0$
satisfying the following
property. 
For any scheme $V$, a
smooth morphism 
$V \to \xX$, 
 and a
closed embedding $i \colon V \hookrightarrow U$
for a smooth scheme $U$, 
there is an exact sequence
\begin{align*}
0 \to \sS_{\xX}^0|_{V} \oplus \mathbb{C}_{V}
 \to i^{-1}\oO_U/I^2 \stackrel{d}{\to}
 i^{-1} \Omega_U/I \cdot i^{-1} \Omega_U. 
\end{align*}
Here $I \subset i^{-1} \oO_U$ is the ideal sheaf
of functions vanishing on $V$, 
and $\mathbb{C}_V$ is the constant sheaf on $V$. 
For example if 
there is a regular function 
$f \colon U \to \mathbb{A}^1$ with 
$V=\{df=0\}$
and 
$f|_{V^{\rm{red}}}=0$, 
then
\begin{align}\label{sec:S}
s=f+I^2 \in \Gamma(V, \sS_{\xX}^0|_{V}).
\end{align}
By definition 
a pair $(\xX, s)$
for an algebraic stack $\xX$ and 
$s \in H^0(\sS_{\xX}^0)$ is called 
a \textit{d-critical stack}
if for any scheme $V$ and a 
smooth morphism $V \to \xX$, 
the section $s|_{V} \in \Gamma(V, \sS_{\xX}^0|_{V})$
is written as (\ref{sec:S}) 
for some data $(U, f, i)$. In this case, 
the data
\begin{align*}
(V, U, f, i)
\end{align*}
is called a \textit{d-critical chart}. 
Roughly speaking, a
$d$-critical stack 
is an algebraic stack which locally has an atlas
given by the critical locus of some function $f$, and 
the section $s$ remembers the function $f$.  

\subsection{Luna \'etale slice theorem for algebraic stacks}
It is well-known that 
the stack of coherent sheaves on a
projective scheme is Zariski locally written as a 
quotient stack. 
However, such a result is not known for 
the stack of objects in the derived category of 
coherent sheaves. 
The following result 
by 
Alper-Hall-Rydh~\cite{AHR}, 
simplified in the $k=\mathbb{C}$ case, 
will be useful to settle the above issue. 
\begin{thm}\emph{(\cite[Theorem~1.2]{AHR})}\label{thm:alper}
Let $\xX$ be a quasi-separated algebraic stack, 
locally of finite type over 
$\mathbb{C}$
with affine geometric stabilizers. 
Let $x \in \xX$ be a point 
and $G \subset \Aut(x)$
a reductive 
subgroup scheme. 
Then there exists an affine 
scheme $S$ with a
$G$-action, a point $p \in S$
fixed by $G$, and 
a smooth morphism
\begin{align*}
\Phi \colon ([S/G], p) \to (\xX, x). 
\end{align*} 
\end{thm}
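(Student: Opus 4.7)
The plan is to mirror the classical Luna \'etale slice theorem by constructing the chart $\Phi$ first formally at $p$ using a Tannakian analysis of the completion of $\xX$ along $BG$, and then algebraizing via equivariant Artin approximation.

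First I would replace $\xX$ by a quasi-compact open neighborhood of $x$. The inclusion $G \hookrightarrow \Aut(x)$ induces, via the residual gerbe at $x$, a closed immersion of the classifying stack $BG \hookrightarrow \xX$ sending the unique point to $x$. Form the coherent completion $\widehat{\xX}_{BG}$ of $\xX$ along this closed substack. Using Tannaka duality for Noetherian algebraic stacks with affine stabilizers (Hall--Rydh) together with equivariant formal GAGA, I would argue that
\begin{equation*}
\widehat{\xX}_{BG} \;\simeq\; [\Spec \widehat{A}\,/\,G]
\end{equation*}
for a complete local Noetherian $G$-equivariant $\mathbb{C}$-algebra $\widehat{A}$ whose maximal ideal $\widehat{\mathfrak{m}}$ is $G$-stable and corresponds to a $G$-fixed point $\widehat{p}$. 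The key input is that $G$ is reductive, so $G$-equivariant coherent sheaves are controlled by $G$-invariants and the Tannakian category $\QCoh(\widehat{\xX}_{BG})$ reconstructs an equivariant ring.

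Next I would invoke a $G$-equivariant version of Artin approximation to pass from this formal model to an honest affine $G$-scheme $S = \Spec A$ with $G$-fixed point $p$ whose completion at $p$ recovers $(\widehat{A}, \widehat{\mathfrak{m}})$ as $G$-algebras. Since the functor of morphisms to $\xX$ is locally of finite presentation, the formal morphism $\widehat{\xX}_{BG} \to \xX$ lifts to an honest $\Phi\colon ([S/G], p) \to (\xX, x)$. Smoothness of $\Phi$ can be verified on tangent spaces at $p$, where it reduces to the equivariant isomorphism of Zariski tangent data built into the construction; shrinking $S$ to a $G$-invariant affine neighborhood of $p$ then propagates smoothness to all of $[S/G]$.

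The main obstacle is the Tannakian identification $\widehat{\xX}_{BG} \simeq [\Spec \widehat{A}/G]$. One must show that the equivariant sheaves on the infinitesimal neighborhoods of $BG \subset \xX$ glue to a Tannakian category equivalent to the category of $G$-representations over a single complete Noetherian ring. Reductivity of $G$ is indispensable: it kills higher cohomology on $BG$ and forces the short exact sequences of infinitesimal thickenings to split equivariantly, producing the $G$-invariant functions that define $\widehat{A}$. Without this input, no quotient description of the completion exists and one obtains only a weaker formal neighborhood. The equivariant Artin approximation in the algebraization step, while delicate, is standard in characteristic zero via Reynolds $G$-averaging of an ordinary approximation.
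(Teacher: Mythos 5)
First, a point of calibration: the paper offers no proof of this statement at all---it is imported verbatim as \cite[Theorem~1.2]{AHR}---so your sketch can only be measured against Alper--Hall--Rydh's actual argument. Your overall strategy (formal local structure at $x$, Tannaka duality, equivariant algebraization) is indeed the skeleton of their proof, but there is a genuine gap at the very first step. You assert that the inclusion $G\subset \Aut(x)$ induces, via the residual gerbe, a \emph{closed immersion} $BG\hookrightarrow \xX$, and you then complete $\xX$ along this closed substack. This fails whenever $G$ is a proper subgroup of $\Aut(x)$: the residual gerbe gives (at best, and only for $x$ closed) a closed immersion $B\Aut(x)\hookrightarrow \xX$, whereas $BG\to B\Aut(x)$ is a smooth surjection with fibre $\Aut(x)/G$, not a monomorphism. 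The case $G\subsetneq \Aut(x)$ is precisely the one this paper needs, where $G$ is the maximal reductive subgroup of a possibly non-reductive $\Aut(E)$ and $\Phi$ is smooth of relative dimension $\dim\Aut(E)-\dim G>0$; there is then no ``coherent completion of $\xX$ along $BG$'' to which Tannaka duality could be applied. What Alper--Hall--Rydh do instead is build, by deformation theory, a compatible tower of smooth morphisms $[S_n/G]\to \xX$ out of infinitesimal thickenings of $BG$, the liftings being unobstructed because linear reductivity of $G$ kills the relevant coherent cohomology on $BG$; only after this do coherent completeness of $[\Spec \widehat{A}/G]$ and Tannaka duality produce the formal chart. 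That inductive construction is the step your outline omits, and it is also where the smoothness of $\Phi$ and its relative dimension actually come from.

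A secondary but real issue is the last step. What is required is equivariant Artin \emph{algebraization}---producing a finite-type affine $G$-scheme $S$ together with a morphism $[S/G]\to \xX$ inducing the given formal datum---not merely approximation, and it is not obtained by Reynolds-averaging a non-equivariant solution: averaging functions does not turn a non-equivariant algebraization into a $G$-scheme mapping to $\xX$. Alper--Hall--Rydh devote a separate appendix to the equivariant algebraization theorem they need. Relatedly, verifying smoothness of $\Phi$ ``on tangent spaces at $p$'' is not sufficient; one needs the infinitesimal lifting property along the whole tower, which in their argument is guaranteed by the way the $S_n$ are constructed rather than checked a posteriori.
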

We can say more on the above result, which 
we mention in the following remark: 
\begin{rmk}\label{rmk:alper}
From the 
construction of $S$ in the proof of~\cite[Theorem~1.2]{AHR}, 
we have the 2-Cartesian diagrams:
\begin{align*}
\xymatrix{
[\{p\}/G] \ar[r]  \ar[d] \ar@{}[dr]|\square & 
[T_x^{[1]} \xX/G] \ar[r] \ar[d] \ar@{}[dr]|\square &
[S/G] \ar[d]^{\Phi} \\
[\{x\}/\Aut(x)] \ar[r] & 
[T_x^{[1]} \xX/\Aut(x)] 
\ar[r] & \xX. 
}
\end{align*}
Here 
$T_x^{[1]}\xX \subset T_{x}\xX$
is the first order infinitesimal neighborhood at zero, 
the top horizontal morphisms
are induced by a
$G$-equivariant embedding 
$T_x^{[1]}\xX \hookrightarrow S$
sending $0$ to $p$, 
and the bottom horizontal morphisms
are the 
natural closed immersions. 
In particular, 
the morphism $\Phi$ is of relative 
dimension $\dim \Aut(x)-\dim G$, 
the morphism
$G=\Aut(p) \to \Aut(x)$
between stabilizers induced by $\Phi$
coincides with the 
inclusion $G \subset \Aut(x)$, and 
the induced map on tangent spaces
\begin{align*}
d\Phi|_{p} \colon 
T_{p}[S/G] \to T_{x} \xX
\end{align*}
is an isomorphism. 
\end{rmk}

\subsection{Proof of Theorem~\ref{thm:critical}}
In what follows, $X$ is a smooth projective
Calabi-Yau 3-fold over $\mathbb{C}$, i.e. 
$K_X=0$ and $H^1(X, \oO_X)=0$. 
We denote by 
$\mM$ 
the stack of objects 
\begin{align*}
E \in D^b \Coh(X), \ \Ext^{<0}(E, E)=0. 
\end{align*}
By the result of Lieblich~\cite{LIE}, 
the stack $\mM$ is 
an algebraic stack locally of finite type. 
Using the theory of Joyce's $d$-critical 
stacks and 
Theorem~\ref{thm:alper},  
one can show 
an algebraic version of 
Theorem~\ref{thm:critical}. 
The following Theorem~\ref{thm:alg}
 is a
generalization of~\cite[Theorem~4.3]{Bussi}, 
and obviously implies Theorem~\ref{thm:critical}. 
\begin{thm}\label{thm:alg}
For $[E] \in \mM$, 
let $G \subset \Aut(E)$ be the maximal reductive subgroup. 
Then 
there exists 
a smooth affine scheme $U$
with a $G$-action, 
a $G$-invariant point $p \in U$, 
a $G$-invariant regular function $f \colon U \to \mathbb{A}^1$
with $f(p)=df|_{p}=0$, 
and a smooth morphism
\begin{align}\label{thm:smooth}
\Phi \colon ([\{df=0\}/G], p) \to (\mM, [E])
\end{align}
with relative dimension $\dim \Aut(E)-\dim G$. 
Moreover, let $g \in G$ 
acts on $\Ext^1(E, E)$ by 
$\epsilon \mapsto g \circ 
\epsilon \circ g^{-1}$. 
Then there is 
a $G$-equivariant \'etale morphism 
$u \colon U \to \Ext^1(E, E)$ with 
$u(p)=0$, and
under the natural identification 
$T_{[E]}\mM=\Ext^1(E, E)$ we have 
\begin{align*}
d\Phi|_{p} =du|_{p}
\colon T_{p}U \to T_{[E]} \mM. 
\end{align*}
\end{thm}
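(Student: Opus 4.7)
The plan is to combine Theorem~\ref{thm:alper} with the $(-1)$-shifted symplectic structure of Pantev--To\"en--Vaqui\'e--Vezzosi~\cite{PTVV}, the algebraic Darboux theorem of Ben-Bassat--Brav--Bussi--Joyce~\cite{BBBJ}, and the $G$-equivariance argument of Bussi~\cite{Bussi}. The key new point is that Theorem~\ref{thm:alper} provides the local quotient presentation of $\mM$ that was previously hypothesised in~\cite{Bussi}, making their strategy run unconditionally.

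First, I would apply Theorem~\ref{thm:alper} to $[E]\in\mM$ and $G\subset\Aut(E)$ to obtain an affine $G$-scheme $S$, a fixed point $q\in S^G$, and a smooth morphism $\Phi_0\colon ([S/G],q)\to(\mM,[E])$ of relative dimension $\dim\Aut(E)-\dim G$. By Remark~\ref{rmk:alper} the induced map on tangent spaces $d\Phi_0|_q\colon T_qS\simto\Ext^1(E,E)$ is a $G$-equivariant isomorphism; combined with the relative dimension formula this forces $\dim_qS=\dim\Ext^1(E,E)$, so at $q$ the affine scheme $S$ is a $G$-equivariant model for $\mM$ of the expected dimension.

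Second, I would equip $\mM$ with the canonical $d$-critical structure $s_{\mM}\in H^0(\sS^0_{\mM})$ of~\cite{BBBJ} coming from the PTVV $(-1)$-shifted symplectic form on the derived enhancement of $\mM$, and pull it back along the smooth morphism $\Phi_0$ to a $G$-invariant $d$-critical structure on the affine scheme $S$. Applying the BBBJ algebraic Darboux theorem at $q$ and then averaging the resulting (a priori non-equivariant) chart over the reductive group $G$ as in~\cite{Bussi} --- an averaging step which uses only that $S$ is affine with a $G$-fixed point, both guaranteed by Theorem~\ref{thm:alper} --- yields a smooth affine $G$-scheme $U$, a $G$-fixed point $p\in U$, and a $G$-invariant regular function $f\colon U\to\mathbb{A}^1$ with $f(p)=df|_p=0$, together with a $G$-equivariant identification of $\{df=0\}$ with $S$ on a $G$-invariant \'etale neighborhood of $q$.

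Finally, since $G$ is reductive, $p\in U$ is $G$-fixed, $U$ is smooth, and $T_pU\cong\Ext^1(E,E)$ as $G$-modules by the first two steps, a $G$-equivariant splitting of $\mathfrak{m}_p/\mathfrak{m}_p^2\to\mathfrak{m}_p$ extended multiplicatively produces, after possibly shrinking $U$, the required $G$-equivariant \'etale morphism $u\colon U\to\Ext^1(E,E)$ with $u(p)=0$ and $du|_p$ equal to the identification above. Composing the natural smooth morphism $[\{df=0\}/G]\to[S/G]$ with $\Phi_0$ then defines $\Phi$, whose relative dimension is $\dim\Aut(E)-\dim G$, and the identity $d\Phi|_p=du|_p$ is automatic by construction. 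I expect the main obstacle to be the second step: the BBBJ Darboux theorem produces $(U,f)$ only non-equivariantly, and the $G$-averaging procedure of~\cite{Bussi} is what converts it into an equivariant chart; it is precisely to run this averaging that the affine quotient presentation furnished by Theorem~\ref{thm:alper} is indispensable.
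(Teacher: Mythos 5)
Your proposal is correct and follows essentially the same route as the paper: apply the Alper--Hall--Rydh slice theorem, pull back the PTVV/BBBJ $d$-critical structure along the smooth morphism $[S/G]\to\mM$, use reductivity of $G$, affineness of $S$ and the $G$-fixed point to produce a $G$-invariant critical chart, and then build the equivariant \'etale map to $\Ext^1(E,E)$ from the tangent space identification. The only cosmetic difference is that the paper gets the equivariant chart directly from Joyce's $d$-critical machinery (a $d$-critical structure on $[S/G]$ is the same as a $G$-invariant one on $S$, which admits $G$-invariant critical charts of minimal dimension $\dim U=\dim T_pS$) rather than via a BBBJ-Darboux-then-average step as in Bussi; you should state explicitly that the chart is taken minimal, since that --- not the relative dimension formula, and not any claim that $\dim_q S=\ext^1(E,E)$ --- is what makes $u\colon U\to\Ext^1(E,E)$ \'etale.
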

\begin{proof}
By~\cite{LIE}, the 
stack $\mM$ is 
locally quasi-separated.  
The geometric stabilizer at $[E] \in \mM$
is $\Aut(E)$, which 
is an affine algebraic group 
as it is an open subscheme of $\Hom(E, E)$. 
Therefore applying Theorem~\ref{thm:alper}
and noting Remark~\ref{rmk:alper}, 
there is an affine scheme $S$ with a
$G$-action, 
a $G$-invariant point $p \in S$
and a smooth morphism
\begin{align}\label{smooth}
\Phi \colon [S/G] \to \mM
\end{align}
which sends $p$ to $[E]$
with relative dimension $\dim \Aut(E)-\dim G$. 

By~\cite[Corollary~3.19]{BBBJ},
the stack $\mM$ extends to the 
$d$-critical stack $(\mM, s)$. 
This fact is based on the result by 
Pantev-T\"oen-Vaquie-Vezzosi~\cite{PTVV}
that the stack $\mM$ is the truncation of 
a smooth derived stack with a $(-1)$-shifted symplectic structure. 
 Since (\ref{smooth}) is a smooth morphism, 
by~\cite[Proposition~2.8]{JoyceD}, 
the pull-back
\begin{align*}
\Phi^{\ast} s \in \Gamma([S/G], \sS_{[S/G]}^0)
\end{align*}
gives the $d$-critical stack 
$([S/G], \Phi^{\ast}s)$. 
But by~\cite[Example~2.55]{JoyceD}, 
a $d$-critical structure 
on the quotient stack $[S/G]$ 
is equivalent to a $G$-invariant 
$d$-critical structure on $S$
defined in~\cite[Definition~2.40]{JoyceD}. 
Since $G$ is reductive, $S$ is affine
and $p\in S$ is fixed by $G$, 
we can apply~\cite[Proposition~2.43, Remark~2.47]{JoyceD}
to conclude the following: 
by shrinking $S$ in
a neighborhood of $p\in S$ if necessary, 
there 
exists a $G$-invariant 
critical chart $(S, U, f, i)$
such that $\dim U=\dim T_{p}S$. 
Here by the definition of
$G$-invariant $d$-critical chart 
in~\cite[Definition~2.40]{JoyceD}, 
$U$ is a smooth scheme with a $G$-action,
$f \colon U \to \mathbb{A}^1$ is a $G$-invariant 
function and 
$i \colon S \hookrightarrow U$
 is a $G$-equivariant 
embedding
such that $S=\{df=0\}$
and  
$f|_{S^{\rm{red}}}=0$. 
Therefore the morphism 
(\ref{smooth})
gives a desired smooth morphism (\ref{thm:smooth}). 

By Remark~\ref{rmk:alper}, 
the morphism
$d\Phi|_{p} \colon 
T_p [S/G] \to T_{[E]}\mM$
is an isomorphism. 
Since 
we have 
\begin{align*}
T_{p}[S/G]=T_p S=T_p U, \ 
T_{[E]}\mM=\Ext^1(E, E)
\end{align*}
and the embedding $i$ is $G$-equivariant, 
the morphism $d\Phi|_{p}$ induces the $G$-equivariant 
isomorphism
$T_p U \stackrel{\cong}{\to} \Ext^1(E, E)$. 
Therefore by shrinking $(S, U)$ if necessary, 
we can construct a desired $G$-equivariant 
\'etale morphism 
$u \colon U \to \Ext^1(E, E)$. 
\end{proof}

\subsection{Behrend function identities}
In~\cite{Beh}, Behrend constructed a
canonical 
constructible function
on any scheme, called the \textit{Behrend function}. 
The Behrend function is naturally 
extended to algebraic stacks~\cite{JS}. 
Using Theorem~\ref{thm:critical}, 
the Behrend function
$\nu_{\mM} \colon \mM \to \mathbb{Z}$ on the 
moduli stack $\mM$ in the previous subsection 
is described as follows: 
\begin{align*}
\nu_{\mM}([E])=(-1)^{\hom(E, E)-\ext^1(E, E)}(1-e(M_f(0)))
\end{align*}
where $M_f(0)$ is the Milnor fiber of 
the function $f \colon V \to \mathbb{C}$ 
at $0 \in V$ 
in 
Theorem~\ref{thm:critical}, 
and $e(-)$ is the topological Euler number. 
The result of Theorem~\ref{thm:critical}
implies the analogue of the 
Behrend function identities 
proved for coherent sheaves in~\cite{JS}. 

We introduce some notation. 
First for a constructible function 
$\nu$ on a scheme $M$, we set
\begin{align*}
\int_{M} \nu \ de \cneq \sum_{m\in \mathbb{Z}} m \cdot e(\nu^{-1}(m)). 
\end{align*}
Next, for $E_1, E_2 \in D^b \Coh(X)$
its Euler pairing is defined by
\begin{align}\label{euler}
\chi(E_1, E_2)\cneq \sum_{i\in \mathbb{Z}}
(-1)^i \ext^i(E_1, E_2). 
\end{align}
Below, we fix 
the heart of a bounded t-structure 
$\aA \subset D^b \Coh(X)$
on $D^b \Coh(X)$. 
\begin{rmk}
Note that any object $E \in \aA$
satisfies $\Ext^{<0}(E, E)=0$ 
by the definition of the t-structure, 
hence determines the point $[E] \in \mM$. 
\end{rmk}
We have the following lemma: 
\begin{lem}\label{lem:euler}
For $E_1, E_2 \in \aA$, we have 
\begin{align*}
\chi(E_1, E_2)=\hom(E_1, E_2)-\ext^1(E_1, E_2)+
\ext^1(E_2, E_1)-\hom(E_2, E_1). 
\end{align*}
\begin{proof}
The lemma follows from 
$\ext^i(E_1, E_2)=0$ for $i<0$
by the definition of the t-structure, and 
the identity
$\ext^i(E_1, E_2)=\ext^{3-i}(E_2, E_1)$
from the Serre duality.  
\end{proof}
\end{lem}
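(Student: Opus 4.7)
The plan is to expand the Euler pairing defined in (\ref{euler}) and collapse the resulting sum to four terms using only two inputs: the vanishing built into the t-structure, and Serre duality for the Calabi--Yau 3-fold $X$. First I would write
\begin{align*}
\chi(E_1, E_2) = \sum_{i \in \mathbb{Z}} (-1)^i \ext^i(E_1, E_2),
\end{align*}
and observe that since $E_1, E_2$ both lie in the heart $\aA$ of a bounded t-structure on $D^b\Coh(X)$, the t-structure axioms give $\ext^i(E_1, E_2) = 0$ for all $i < 0$. This already truncates the sum to begin at $i = 0$.

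Next I would invoke Serre duality. Because $X$ is a smooth projective Calabi--Yau 3-fold, $K_X \cong \oO_X$, so
\begin{align*}
\ext^i(E_1, E_2) = \ext^{3-i}(E_2, E_1)
\end{align*}
for every $i \in \mathbb{Z}$. Applying the previous vanishing with the roles of $E_1$ and $E_2$ reversed then forces $\ext^i(E_1, E_2) = 0$ also for $i > 3$, so the only possibly nonzero terms in $\chi(E_1, E_2)$ are those for $i = 0, 1, 2, 3$. Substituting the two Serre-duality identities $\ext^2(E_1, E_2) = \ext^1(E_2, E_1)$ and $\ext^3(E_1, E_2) = \hom(E_2, E_1)$ into the resulting four-term expression then yields the stated formula. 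There is essentially no obstacle here: the argument is short bookkeeping combining the t-structure axiom with Calabi--Yau Serre duality, and the only point requiring care is the alternation of signs.
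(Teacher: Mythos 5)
Your proposal is correct and follows exactly the paper's own argument: the t-structure vanishing $\ext^{i}(E_1,E_2)=0$ for $i<0$ together with Calabi--Yau Serre duality $\ext^{i}(E_1,E_2)=\ext^{3-i}(E_2,E_1)$ truncates the Euler pairing to $i=0,1,2,3$ and identifies the $i=2,3$ terms with $\ext^1(E_2,E_1)$ and $\hom(E_2,E_1)$. You have merely spelled out the bookkeeping that the paper leaves implicit.
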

Thirdly 
for $E_1, E_2 \in \aA$
and $\xi \in \mathbb{P}(\Ext^1(E_1, E_2))$, we denote 
by $E_{\xi} \in \aA$ the object given by 
the extension class corresponding to $\xi$: 
\begin{align*}
0 \to E_2 \to E_{\xi} \to E_1 \to 0. 
\end{align*}
Under the above preparation, 
we can state the 
generalization of~\cite[Theorem~5.11]{JS} as follows: 
\begin{thm}\label{thm:Beh}
For any heart of a bounded t-structure $\aA \subset D^b \Coh(X)$
and $E_1, E_2 \in \aA$, 
we have the following identities: 
\begin{align*}
&\nu_{\mM}([E_1 \oplus E_2])
=(-1)^{\chi(E_1, E_2)}
\nu_{\mM}([E_1]) \nu_{\mM}([E_2]) \\
&\int_{\xi \in \mathbb{P}(\Ext^1(E_2, E_1))}
\nu_{\mM}(E_{\xi}) \ de 
-
\int_{\xi \in \mathbb{P}(\Ext^1(E_1, E_2))}
\nu_{\mM}(E_{\xi}) \ de \\
&=(\ext^1(E_2, E_1)-\ext^1(E_1, E_2)) \nu_{\mM}([E_1 \oplus E_2]). 
\end{align*}
\end{thm}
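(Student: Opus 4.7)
The plan is to follow the proof of~\cite[Theorem~5.11]{JS} essentially verbatim. Inspecting that argument, the only input from the geometry of coherent sheaves is the local critical-chart description of $\mM$ near $[E]$, which for sheaves was supplied by the gauge-theoretic~\cite[Theorem~5.4]{JS}. Theorem~\ref{thm:critical} provides the identical shape of local model for any object $E$ with $\Ext^{<0}(E,E)=0$, and in particular for any object of any heart of a bounded t-structure, so the whole argument transfers to our setting. Once the model is in hand, the Behrend function is computed by
\[
\nu_{\mM}([E]) = (-1)^{\hom(E,E) - \ext^1(E,E)}\bigl(1 - e(M_f(0))\bigr),
\]
and both identities reduce to manipulations of $G$-equivariant Milnor fibers.

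For the multiplicativity identity, the first step is to apply Theorem~\ref{thm:critical} at $E = E_1 \oplus E_2$. The automorphism group $\Aut(E_1 \oplus E_2)$ contains the torus $T = \mathbb{C}^*_{\lambda_1} \times \mathbb{C}^*_{\lambda_2}$ scaling the summands, and the maximal reductive subgroup $G$ can be chosen to contain $T$. Under the decomposition $\Ext^1(E, E) = \bigoplus_{i,j} \Ext^1(E_i, E_j)$, the torus $T$ acts on the $(i,j)$-summand with weight $\lambda_i^{-1}\lambda_j$, so the $T$-fixed locus in $\Ext^1(E,E)$ is the diagonal $\Ext^1(E_1, E_1) \oplus \Ext^1(E_2, E_2)$. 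Since $f$ is $G$-invariant, hence $T$-invariant, a Thom-Sebastiani/$T$-localization argument on the Milnor fiber in the style of~\cite[\S 5.5]{JS} would give
\[
1 - e(M_f(0)) = \bigl(1 - e(M_{f_1}(0))\bigr)\bigl(1 - e(M_{f_2}(0))\bigr),
\]
where $f_i$ is the diagonal restriction of $f$ to $\Ext^1(E_i,E_i)$, which is precisely the local potential of $\mM$ at $[E_i]$. Combining this with Lemma~\ref{lem:euler}, which rewrites $\hom(E,E)-\ext^1(E,E)$ modulo $2$ as $\chi(E_1, E_2)$ plus the corresponding exponents for $E_1$ and $E_2$, yields the first identity.

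For the second identity, the plan is to analyze the two projective subfamilies $\mathbb{P}(\Ext^1(E_i, E_j)) \subset \mathbb{P}(\Ext^1(E, E))$ inside the projectivization of the $T$-invariant critical chart. For generic $\xi$ in one of these projective spaces, the extension $E_\xi$ has a local model obtained by restricting $f$ along the weight-$\lambda_i^{-1}\lambda_j$ line through $\xi$; a weighted Morse-theoretic computation for $T$-equivariant Milnor fibers, of the kind carried out via~\cite[Theorem~4.11]{JS} in the original paper, would express both integrals as explicit multiples of $1 - e(M_f(0))$. The projective integral over $\mathbb{P}(\Ext^1(E_2, E_1))$ and its counterpart over $\mathbb{P}(\Ext^1(E_1, E_2))$ differ precisely by the difference of dimensions of the two extension spaces, giving the factor $\ext^1(E_2, E_1) - \ext^1(E_1, E_2)$ multiplying $\nu_\mM([E_1 \oplus E_2])$.

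The principal obstacle in the original Joyce-Song argument was constructing a genuinely \emph{$G$-equivariant} holomorphic potential $f$ at a strictly semi-Schur object; the partial results of~\cite{BBBJ, Bussi} stopped short of full equivariance, and without it the torus-localization arguments above are unavailable. Theorem~\ref{thm:critical} supplies exactly this equivariance, so the hard geometric content is resolved, and the remaining sign bookkeeping and $T$-equivariant Milnor-fiber calculations proceed identically to~\cite[\S 5.4--5.5]{JS}.
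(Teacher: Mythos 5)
Your proposal takes essentially the same route as the paper: the paper's proof is precisely the one-line reduction "run the argument of [JS, Theorem 5.11] verbatim, substituting Theorem~\ref{thm:critical} for the gauge-theoretic local model and using Lemma~\ref{lem:euler} for the sign bookkeeping," and your expanded sketch of the torus-localization/Milnor-fiber mechanics inside that argument is consistent with it (the paper even remarks afterward that only the maximal-torus-invariant version of the local potential is needed, matching your emphasis on $T$-equivariance).
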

\begin{proof}
The result is proved for $\aA=\Coh(X)$
in~\cite[Theorem~5.11]{JS}. 
For a general $\aA$, the result follows 
from the same argument of~\cite[Theorem~5.11]{JS}, 
using Theorem~\ref{thm:critical}
instead of~\cite[Theorem~5.5]{JS}
and noting Lemma~\ref{lem:euler}. 
\end{proof}
\begin{rmk}
By the proof of 
Theorem~\ref{thm:critical},
the weaker statement of Theorem~\ref{thm:critical}
holds after replacing $G$ by its maximal torus. 
As proved in~\cite[Theorem~4.2]{Bussi}, 
the 
latter weaker version is enough to prove 
Theorem~\ref{thm:Beh}. 
\end{rmk}

\subsection{Hall algebras}
We recall the notion of 
motivic Hall algebras 
following~\cite{BrI}. 
Let $\sS$ be 
an algebraic stack locally of finite type 
with affine geometric stabilizers. 
By definition, $K(\mathrm{St}/\sS)$ is defined 
to be the 
$\mathbb{Q}$-vector space 
generated by 
isomorphism classes of symbols
\begin{align}\label{symbol}
[\xX \stackrel{\rho}{\to} \sS]
\end{align}
where $\xX$ is an algebraic stack 
of finite type with affine geometric 
stabilizers. 
The relations are generated by (cf.~\cite[Definition~3.10]{BrI})
\begin{enumerate}
\item 
For every pair of $\xX_1, \xX_2$, we have
\begin{align*}
[\xX_1 \sqcup \xX_2 \stackrel{\rho_1 \sqcup \rho_2}{\to}
 \sS]=
[\xX_1 \stackrel{\rho_1}{\to} \sS]+[\xX_2 \stackrel{\rho_2}{\to}
 \sS]. 
\end{align*}
\item For every geometric bijection 
$\rho \colon \xX_1 \to \xX_2$ and 
a morphism $\rho' \colon \xX_2 \to \sS$, 
we have
\begin{align*}
[\xX_1 \stackrel{\rho \circ \rho'}{\to} \sS]
=[\xX_2 \stackrel{\rho'}{\to} \sS]. 
\end{align*}
\item For every pair of Zariski 
locally trivial
fibrations 
$h_i \colon \xX_i \to \yY$ 
and every morphism $g \colon \yY \to \sS$, we have
\begin{align*}
[\xX_1 \stackrel{g \circ h_1}{\to} \sS]=
[\xX_2 \stackrel{g \circ h_2}{\to} \sS]. 
\end{align*}
\end{enumerate}
Let $\aA \subset D^b \Coh(X)$ be the 
heart of a bounded t-structure such that the substack
\begin{align}\label{ObjA}
\oO bj(\aA) \subset \mM
\end{align}
consisting of objects in $\aA$
is an open substack of $\mM$. 
Then the motivic Hall algebra of $\aA$ is defined as
\begin{align}\label{Hall}
H(\aA) \cneq K(\mathrm{St}/\oO bj(\aA)). 
\end{align}
Note that $H(\aA)$ is naturally a 
$K(\mathrm{St}/\mathbb{C}) \cneq 
K(\mathrm{St}/\Spec \mathbb{C})$-module 
by 
\begin{align*}
[\xX \to \Spec \mathbb{C}] \cdot 
[\yY \stackrel{\rho}{\to} \oO bj(\aA)]
=[\xX \times \yY \stackrel{p}{\to} \yY \stackrel{\rho}{\to} \oO bj(\aA)]
\end{align*}
where $p$ is the projection. 
There is an associative 
$K(\mathrm{St}/\mathbb{C})$-algebra
structure $\ast$ 
on $H(\aA)$
based on the Ringel-Hall algebras.
Let $\eE x(\aA)$ be the 
stack of short exact sequences 
\begin{align*}
0 \to E_1 \to E_3 \to E_2 \to 0
\end{align*}
in $\aA$
and $p_i \colon \eE x(\aA) \to 
\oO bj(\aA)$ the 
1-morphism sending 
$E_{\bullet}$ to $E_i$. 
The $\ast$-product on $H(\aA)$ 
is given by 
\begin{align*}
[\xX_1 \stackrel{\rho_1}{\to} \oO bj(\aA)]
\ast [\xX_2 \stackrel{\rho_2}{\to} \oO bj(\aA)]
=[\xX_3 \stackrel{\rho_3}{\to} \oO bj(\aA)]
\end{align*}
where $(\xX_3, \rho_3=p_3 \circ (\rho_1', \rho_2'))$ is given by
the following Cartesian diagram
\begin{align*}
\xymatrix{
\xX_3 \ar[r]^{\hspace{-5mm}(\rho_1', \rho_2')}\ar[d] \ar@{}[dr]|\square
& \eE x(\aA) \ar[d]^{(p_1, p_2)}
  \ar[r]^{p_3} &
\oO bj(\aA) \\
\xX_1 \times \xX_2  \ar[r]^{\hspace{-5mm}(\rho_1, \rho_2)} 
& \oO bj(\aA)^{\times 2}.
& }
\end{align*}
The unit is given by 
$1=[\Spec \mathbb{C} \to \oO bj(\aA)]$
which corresponds to $0\in \aA$. 

Let $\Gamma$ be the image of the Chern character map
\begin{align*}
\Gamma \cneq \Imm (\ch \colon K(X) \to H^{\ast}(X, \mathbb{Q})). 
\end{align*}
Then $\Gamma$ is a finitely generated free abelian group. 
The stack (\ref{ObjA}) decomposes into the disjoint union 
of 
open and closed substacks 
\begin{align*}
\oO bj(\aA)=\coprod_{v \in \Gamma} \oO bj_{v}(\aA)
\end{align*}
where $\oO bj_{v}(\aA)$ is the stack of 
objects in $\aA$ with Chern character $v$. 
The algebra $H(\aA)$ is $\Gamma$-graded
\begin{align*}
H(\aA)=\bigoplus_{v \in \Gamma} H_{v}(\aA)
\end{align*}
where $H_{v}(\aA)$ is spanned by 
$[\xX \to \oO bj(\aA)]$
which factors through 
$\oO bj_{v}(\aA) \subset \oO bj(\aA)$. 

\subsection{Poisson algebra homomorphism}\label{subsec:poisson}
It is easy to see that the affine line
\begin{align*}
\mathbb{L} \cneq [\mathbb{A}^1 \to \Spec \mathbb{C}]
\in K(\mathrm{St}/\mathbb{C})
\end{align*}
is an invertible 
element.
We define
the subalgebra
\begin{align*}
K(\mathrm{Var}/\mathbb{C})[\mathbb{L}^{-1}] 
\subset K(\mathrm{St}/\mathbb{C})
\end{align*}
to be generated by 
$\mathbb{L}^{-1}$ and 
$[Y \to \Spec \mathbb{C}]$
for a variety $Y$. 
The $K(\mathrm{Var}/\mathbb{C})[\mathbb{L}^{-1}]$-submodule
\begin{align}\label{reg}
H^{\rm{reg}}(\aA) \subset H(\aA)
\end{align} 
is defined to be spanned by 
$[Z \to \oO bj(\aA)]$ 
so that $Z$ is a variety. 
An element of $H^{\rm{reg}}(\aA)$ is called \textit{regular}. 
By~\cite[Theorem~5.1]{BrI}, 
the submodule (\ref{reg}) 
is indeed a subalgebra
with respect to the $\ast$-product. 
Moreover the quotient
\begin{align*}
H^{\rm{sc}}(\aA)
 \cneq H^{\rm{reg}}(\aA)/(\mathbb{L}-1)H^{\rm{reg}}(\aA)
\end{align*}
is a commutative algebra. 
Therefore for $f, g \in H^{\rm{sc}}(\aA)$, 
we can define the following bracket
on $H^{\rm{sc}}(\aA)$: 
\begin{align*}
\{f, g\} \cneq
\frac{f \ast g-g \ast f}{\mathbb{L}-1}. 
\end{align*}
By the $\ast$-product together with the above 
bracket $\{-, -\}$, 
we have the Poisson algebra structure on $H^{\rm{sc}}(\aA)$. 

We define another Poisson algebra $C(X)$ to be
\begin{align*}
C(X) \cneq \bigoplus_{v \in \Gamma} \mathbb{\mathbb{Q}} \cdot c_{v}. 
\end{align*}
Note that by the Riemann-Roch theorem, 
the Euler pairing (\ref{euler})
descends to the anti-symmetric bi-linear form
\begin{align*}
\chi \colon \Gamma \times \Gamma \to \Gamma. 
\end{align*}
We will only use the following computation, 
which is an easy consequence of the Riemann-Roch 
theorem: 
\begin{align}\label{RR}
\chi((0, 0, -\beta_1, -n_1), (r, D, -\beta_2, -n_2))
=rn_1-D \beta_1. 
\end{align}
Here and in what follows, we use the notation (\ref{Chern}) 
for the elements in $\Gamma$. 
The
 $\ast$-product on $C(X)$ is defined by 
\begin{align*}
c_{v_1} \ast c_{v_2}=(-1)^{\chi(v_1, v_2)}c_{v_1+v_2}. 
\end{align*}
The Poisson bracket on $C(X)$ is defined by
\begin{align*}
\{c_{v_1}, c_{v_2}\} =(-1)^{\chi(v_1, v_2)}\chi(v_1, v_2) c_{v_1+v_2}. 
\end{align*}
The result
of Theorem~\ref{thm:critical} leads to the following result: 
\begin{thm}\emph{(\cite[Theorem~5.2]{BrI})}
There is a Poisson algebra homomorphism 
\begin{align*}
I \colon H^{\rm{sc}}(\aA) \to C(X)
\end{align*}
such that for a variety $Z$ with 
a morphism $\rho \colon Z \to \oO bj(\aA)$
which factors through $\oO bj_v(\aA)$, 
we have
\begin{align*}
I([Z \stackrel{\rho}{\to} \oO bj(\aA)])=
\left(
\int_{Z} \rho^{\ast} \nu_{\mM} \right)
\cdot c_{v}. 
\end{align*}
Here $\nu_{\mM}$ is the Behrend function on $\mM$
restricted to $\oO bj(\aA)$. 
\end{thm}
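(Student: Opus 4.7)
The plan is to adapt Bridgeland's construction from~\cite{BrI} to an arbitrary heart $\aA$; Theorem~\ref{thm:critical} (previously missing for hearts other than $\Coh(X)$) now licenses the Behrend function identities of Theorem~\ref{thm:Beh}, which are the two crucial ingredients. First I would define $I$ on a generator $[Z \stackrel{\rho}{\to} \oO bj_v(\aA)]$ with $Z$ a variety by the stated formula, extend $\mathbb{Q}$-linearly, and check the three defining relations of $K(\mathrm{St}/\oO bj(\aA))$: disjoint union is immediate from additivity of Euler integrals; geometric bijections preserve $\int(-)\,de$ of constructible functions; Zariski-locally trivial fibrations reduce to the multiplicativity $e(X) = e(F)e(B)$ of topological Euler characteristics, which matches the $K(\Var/\mathbb{C})[\mathbb{L}^{-1}]$-module structure on $H^{\rm{reg}}(\aA)$. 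Since $e(\mathbb{A}^1)=1$, the class $(\mathbb{L}-1)\cdot \xi$ is sent to zero, so $I$ descends to the commutative algebra $H^{\rm{sc}}(\aA)$.

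Next I would verify multiplicativity $I(f \ast g) = I(f)\ast I(g)$ on generators $f = [Z_1 \to \oO bj_{v_1}(\aA)]$ and $g = [Z_2 \to \oO bj_{v_2}(\aA)]$. Stratifying the stack $\xX_3$ that fits in the defining Cartesian square over $Z_1 \times Z_2$ by the isomorphism class of the middle term of the extension, the integration of $\nu_{\mM}$ on each fibre is governed by the first identity in Theorem~\ref{thm:Beh}: the sign $(-1)^{\chi(v_1,v_2)}$ arises on every stratum and matches the sign defining $\ast$ on $C(X)$, while the contributions of the $\Ext^1$- and $\Hom$-torsors cancel after the $(\mathbb{L}-1)$-quotient.

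The main obstacle is verifying the bracket identity $I(\{f,g\}) = \{I(f), I(g)\}$. After writing $f \ast g - g \ast f$ as integrals over the stacks of extensions $E_\xi$ of $E_1$ by $E_2$ and of $E_2$ by $E_1$, passage to the quotient by $\mathbb{L}-1$ converts the affine $\Ext^1(-,-)$-fibres into their projectivizations together with a split-locus correction supported on $E_1 \oplus E_2$. The second identity of Theorem~\ref{thm:Beh} then produces precisely the factor $\ext^1(E_2, E_1) - \ext^1(E_1, E_2)$, which by Lemma~\ref{lem:euler} equals $\chi(v_1, v_2)$, multiplied by $\nu_{\mM}([E_1 \oplus E_2])$. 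Combined with the sign from the previous step this yields $(-1)^{\chi(v_1, v_2)} \chi(v_1, v_2) c_{v_1 + v_2}$, which is exactly $\{c_{v_1}, c_{v_2}\}$. The remainder is formal and parallel to~\cite[\S 5]{BrI}, but crucially Theorem~\ref{thm:critical} is what makes the argument go through for any heart $\aA$ of a bounded t-structure on $D^b \Coh(X)$, rather than only for $\Coh(X)$ itself.
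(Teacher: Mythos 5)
Your proposal is correct and follows essentially the same route as the paper: the paper's entire proof is the observation that Theorem~\ref{thm:Beh} (itself a consequence of Theorem~\ref{thm:critical}) supplies exactly the Behrend function identities required as the hypothesis of \cite[Theorem~5.2]{BrI}, which is precisely the key input you identify before unfolding Bridgeland's construction in detail. The one caveat is that $\ext^1(E_2,E_1)-\ext^1(E_1,E_2)$ equals $\chi(v_1,v_2)$ only after the $\hom$-terms appearing in Lemma~\ref{lem:euler} are absorbed by the stabilizer contributions, a correction that belongs to the formal part you delegate to \cite[\S 5]{BrI}.
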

\begin{proof}
By Theorem~\ref{thm:Beh}, 
the assumption in~\cite[Theorem~5.2]{BrI} is satisfied, 
hence we conclude the result. 
\end{proof}

\section{Higher rank DT/PT correspondence}\label{sec:DTPT}
In this section, we prove Theorem~\ref{thm:higherDTPT}.

\subsection{Donaldson-Thomas invariants}
Below, we fix an ample divisor $\omega$ on 
a Calabi-Yau 3-fold $X$. 
Let us take an element
\begin{align}\label{rdbn}
v=
(r, D, -\beta, -n) \in \Gamma
\end{align}
in the notation (\ref{Chern}). 
Since $\Gamma$ is the image of the Chern character map, 
we may regard
\begin{align*}
r \in \mathbb{Z}, \ D \in H^2(X, \mathbb{Z}), \
2\beta \in H^4(X, \mathbb{Z}), \ 6n \in \mathbb{Z}. 
\end{align*}
We also assume that 
\begin{align}\label{coprime}
r\in \mathbb{Z}_{\ge 1}, \ 
\mathrm{g.c.d.}(r, D \cdot \omega^2)=1. 
\end{align}
Recall that for 
a coherent sheaf $E$ on $X$, 
its slope function 
$\mu_{\omega}$ is defined 
by
\begin{align*}
\mu_{\omega}(E) \cneq \frac{c_1(E) \cdot \omega^2}{\rank(E)} \in \mathbb{Q} \cup \{\infty\}. 
\end{align*}
A sheaf $E$ is called 
$\mu_{\omega}$-\textit{(semi)stable} if for any 
non-trivial subsheaf $F \subset E$, we have
\begin{align*}
\mu_{\omega}(F) <(\le) \mu_{\omega}(E/F). 
\end{align*}
By the coprime condition (\ref{coprime}), the 
moduli space
\begin{align}\label{MDT}
M_{\rm{DT}}(r, D, -\beta, -n)
\end{align}
of $\mu_{\omega}$-semistable sheaves with Chern character 
(\ref{rdbn})
consists of $\mu_{\omega}$-stable sheaves, 
and it is a projective scheme~\cite{Hu}. 
Also by the CY3 condition of $X$, 
the moduli space (\ref{MDT})
is equipped with 
a symmetric perfect obstruction theory
and the associated zero dimensional virtual class~\cite{Thom}, ~\cite{BBr}.
The \textit{Donaldson-Thomas (DT) invariant} is defined 
by
\begin{align}\label{def:DT}
\DT(r, D, -\beta, -n) \cneq 
\int_{[M_{\rm{DT}}(r, D, -\beta, -n)]^{\rm{vir}}}1. 
\end{align}
By fixing $(r, D)$, we define the generating series
\begin{align}\label{DT:series}
\DT_{r, D}(q, t) \cneq 
\sum_{\beta, n} \DT(r, D, -\beta, -n) q^n t^{\beta}. 
\end{align}

\subsection{Higher rank Pandharipande-Thomas theory}\label{subsec:PT}
We introduce the higher rank version of 
\textit{Pandharipande-Thomas (PT) theory}. 
The following notion was 
found in Lo's work~\cite{JLo} in the analysis of 
Bayer's polynomial stability conditions~\cite{Bay}. 
\begin{defi}\label{def:PT}
An object $I^{\bullet} \in D^b \Coh(X)$ is called PT-(semi)stable
if the following conditions are satisfied:
\begin{enumerate}
\item $\hH^i(I^{\bullet})=0$ for $i \neq 0, 1$. 
\item $\hH^0(I^{\bullet})$ is $\mu_{\omega}$-(semi)stable and 
$\hH^1(I^{\bullet})$ is zero dimensional. 
\item $\Hom(Q[-1], I^{\bullet})=0$ for any zero dimensional 
sheaf $Q$. 
\end{enumerate}
\end{defi}
Let us consider its relationship to the 
rank one PT theory in~\cite{PT}. 
\begin{exam}
Let $\eE$ be a locally free $\mu_{\omega}$-(semi)stable 
sheaf on $X$, 
$F$ a pure one dimensional sheaf on $X$
and $s \colon \eE \to F$
a morphism which is surjective in dimension one. 
Then the object
\begin{align}\label{PT:obj}
I^{\bullet}=(\eE \stackrel{s}{\to} F) \in D^b \Coh(X)
\end{align}
with $\eE$ located in degree zero is a PT (semi)stable object. 
If $\eE=\oO_X$, then it is nothing  
but the stable pair in~\cite{PT}. 
\end{exam}
If an object $I^{\bullet} \in D^b \Coh(X)$ is 
rank one, 
then it is proved in~\cite[Lemma~3.11]{Tcurve1}
that $I^{\bullet}$ is PT semistable 
if and only if it is a two term complex (\ref{PT:obj})
with $\eE$ a line bundle. 
The same argument easily shows the following: 
\begin{lem}
A PT-semistable object $I^{\bullet} \in D^b \Coh(X)$ with 
$\rank(I^{\bullet})>0$ is
quasi-isomorphic to a two term complex of the form (\ref{PT:obj})
if and only if $\hH^0(I^{\bullet})^{\vee \vee}$ is locally 
free. 
\end{lem}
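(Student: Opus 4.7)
The plan is to establish the two directions separately, following the rank-one argument of \cite[Lemma~3.11]{Tcurve1}. Write $G := \hH^0(I^{\bullet})$ and $Q := \hH^1(I^{\bullet})$, so PT-semistability gives the truncation triangle $G \to I^{\bullet} \to Q[-1]$ with $G$ torsion-free $\mu_{\omega}$-semistable and $Q$ zero-dimensional.

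The ``only if'' direction is the routine one. Assuming $I^{\bullet} \cong (\eE \stackrel{s}{\to} F)$ in the stated form, the triangle $I^{\bullet} \to \eE \stackrel{s}{\to} F$ identifies $G$ with $\ker(s)$ and exhibits $T := \eE/G = \imm(s) \subset F$ as a subsheaf of the pure sheaf $F$, hence pure one-dimensional. Since $T$ has pure codimension two on the Calabi-Yau threefold $X$, depth considerations give $\mathcal{E}xt^i(T, \oO_X) = 0$ for $i < 2$; dualizing $0 \to G \to \eE \to T \to 0$ twice then yields $G^{\vee\vee} \cong \eE^{\vee\vee} = \eE$, which is locally free.

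For the ``if'' direction, set $\eE := G^{\vee\vee}$, locally free by hypothesis. The first step is to construct a morphism $\psi : I^{\bullet} \to \eE$ in $D^b\Coh(X)$ lifting the canonical inclusion $\iota : G \hookrightarrow \eE$ on $\hH^0$. Applying $\Hom(-, \eE)$ to the truncation triangle places the obstruction in $\Ext^2(Q, \eE)$, and Serre duality on $X$ identifies $\Ext^i(Q, \eE) \cong \Ext^{3-i}(\eE, Q)^{\vee}$; for $i = 1, 2$ these groups vanish because $\eE$ is locally free and $Q$ is zero-dimensional, so $\psi$ exists and is unique. Define $F := \Cone(\psi)$. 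The long exact cohomology sequence of the triangle $I^{\bullet} \to \eE \to F$, combined with injectivity of $\iota$ and $\hH^1(\eE) = 0$, shows that $F$ is concentrated in degree zero and fits into an extension $0 \to T \to F \to Q \to 0$ with $T = \eE/G$.

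It remains to verify that $F$ is pure one-dimensional, that $\eE \to F$ is surjective in dimension one, and that $\eE$ is $\mu_{\omega}$-semistable. The last point follows because reflexification preserves slope and any saturated subsheaf of $\eE$ corresponds to a subsheaf of $G$ of the same slope; the surjectivity in dimension one is automatic since the cokernel of $\eE \to F$ is the zero-dimensional sheaf $Q$. For purity, given any zero-dimensional sheaf $F_0$, apply $\Hom(F_0, -)$ to the triangle $I^{\bullet} \to \eE \to F$: one has $\Hom(F_0, \eE) = 0$ since $\eE$ is torsion-free, $\Hom(F_0, I^{\bullet}) = 0$ via the truncation triangle, and crucially $\Ext^1(F_0, I^{\bullet}) = \Hom(F_0[-1], I^{\bullet}) = 0$ by condition~(3) of Definition~\ref{def:PT}. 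Hence $\Hom(F_0, F) = 0$, so $F$ has no zero-dimensional subsheaf and is pure one-dimensional. The main obstacle I expect is orchestrating these Ext-vanishings so that $\psi$ exists and $F$ is simultaneously pure; the key leverage in both places is local-freeness of $\eE$ combined with PT condition~(3), and producing $F$ as a cone rather than as an explicit extension avoids the more delicate problem of matching a class in $\Ext^1(Q, T)$ with the extension class of $I^{\bullet}$ in $\Ext^2(Q, G)$.
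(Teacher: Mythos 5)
Your proof is correct and is essentially the adaptation of the rank-one argument of \cite[Lemma~3.11]{Tcurve1} that the paper invokes without writing out: map $I^{\bullet}$ to the locally free double dual $\eE=\hH^0(I^{\bullet})^{\vee\vee}$ (the obstruction lies in $\Ext^2(Q,\eE)\cong\Ext^1(\eE,Q)^{\vee}=0$), take the cone $F$, and use condition (3) of PT-stability together with torsion-freeness of $\eE$ to see that $F$ admits no zero-dimensional subsheaf, hence is pure. The remaining checks (the converse via $\mathcal{E}xt^{<2}(T,\oO_X)=0$ for $T$ of codimension two, semistability of the reflexive hull, and surjectivity in dimension one) are all in order.
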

If the rank is bigger than one, 
a reflexive sheaf 
may not be locally free. 
The following example gives an 
exotic PT stable object. 
\begin{exam}
Let $\uU$ be a $\mu_{\omega}$-stable reflexive 
sheaf which is not locally free at $x \in X$. 
Then $\Ext^2(\oO_x, U)=\Ext^1(U, \oO_x)^{\vee} \neq 0$.
The object $I^{\bullet}$ given by a non-trivial extension  
\begin{align*}
\uU \to I^{\bullet} \to \oO_x[-1]
\end{align*}
is a PT stable object. 
However it is easy to see that $I^{\bullet}$ is 
not quasi-isomorphic to (\ref{PT:obj})
for any torsion free sheaf $\eE$
and a one dimensional sheaf $F$. 
\end{exam}
Let
\begin{align}\label{MPT}
M_{\rm{PT}}(r, D, -\beta, -n)
\end{align}
be the moduli space of 
PT-semistable objects in $D^b \Coh(X)$
with Chern character (\ref{rdbn}).
By the coprime condition (\ref{coprime}), 
the moduli space (\ref{MPT}) consists of 
only PT-stable objects. 
By~\cite{JLo2}, the 
moduli space (\ref{MPT}) is a proper algebraic 
space of finite type. 
Because of the CY3 condition of $X$, it 
is also equipped with a symmetric perfect obstruction theory~\cite{HT2}, 
and the associate zero dimensional virtual fundamental class. 
Similarly to (\ref{def:DT}), 
 we can define the invariant
\begin{align}\label{def:PT}
\PT(r, D, -\beta, -n) \cneq 
\int_{[M_{\rm{PT}}(r, D, -\beta, -n)]^{\rm{vir}}}1. 
\end{align}
Also similarly to (\ref{DT:series}), 
we consider the generating series
\begin{align}\label{PT:series}
\PT_{r, D}(q, t) \cneq 
\sum_{\beta, n} \PT(r, D, -\beta, -n) q^n t^{\beta}. 
\end{align}
\subsection{Tilting of $\Coh(X)$}
Recall that a \textit{torsion pair} on an
abelian 
category $\aA$ is a pair of 
full subcategories $(\tT, \fF)$ on $\aA$
such that 
\begin{enumerate}
\item We have $\Hom(T, F)=0$ for $T \in \tT$, $F \in \fF$. 
\item For any $E \in \aA$, there is an exact sequence
$0 \to T \to E \to F\to 0$
with $T \in \tT$ and $F \in \fF$. 
\end{enumerate}
The category $\tT$ is called the \textit{torsion part} 
of the torsion pair $(\tT, \fF)$. 
For a torsion pair $(\tT, \fF)$ on $\aA$, 
its \textit{tilting}
is defined by
\begin{align*}
\aA^{\dag} \cneq \langle \fF, \tT[-1] \rangle
\subset D^b(\aA). 
\end{align*}
Here $\langle \ast \rangle$ means the extension closure. 
The tilting $\aA^{\dag}$ is known to be the 
heart of a bounded t-structure on $D^b(\aA)$
(cf.~\cite{HRS}). 
More generally, we introduce the following notion:
\begin{defi}\label{def:nest}
Let $\aA$ be an 
exact category and 
$\fF_1, \fF_2, \cdots, \fF_n$
full subcategories of $\aA$. 
Then we write
\begin{align*}
\aA=\langle \fF_1, \fF_2, \cdots, \fF_n \rangle
\end{align*}
if the following
conditions are satisfied:
\begin{enumerate}
\item We have $\Hom(F_i, F_j)=0$ for
$F_i \in \fF_i$ and $F_j \in \fF_j$ with $i<j$. 
\item For any $E \in \aA$, there is a filtration 
\begin{align}\label{filt}
0=E_0 \subset E_1 \subset \cdots \subset E_n=E
\end{align}
in $\aA$
such that $E_i/E_{i-1} \in \fF_i$. 
\end{enumerate}
\end{defi}
\begin{rmk}\label{rmk:filt}
In the above definition, the 
$n=2$ case corresponds to the torsion pair. 
Similarly to the torsion pair, 
the filtration (\ref{filt})
is unique if it exists. 
The filtration (\ref{filt}) is an analogue of the 
Harder-Narasimhan filtration in 
some stability condition. 
Also for any $1\le i\le n$, 
we have the torsion pair in $\aA$
\begin{align*}
\aA=\left\langle \langle \fF_1, \cdots, \fF_i \rangle, 
\langle \fF_{i+1}, \cdots, \fF_{n} \rangle \right\rangle.
\end{align*} 
\end{rmk}

Let $(X, \omega)$ be as in the previous subsections. 
For an interval $I \subset \mathbb{R} \cup \{\infty\}$, we set
\begin{align*}
\Coh_{I}(X) &\cneq \left\langle E \in \Coh(X): \begin{array}{c}
E \mbox{ is } \mu_{\omega} \mbox{-semistable } \\
\mbox{ with }
\mu_{\omega}(E) \in I \end{array}
\right\rangle \cup \{0\}. 
\end{align*}
Let us take the numerical class as in 
(\ref{rdbn}). 
We set
\begin{align}\label{def:mu}
\mu \cneq \frac{D \cdot \omega^2}{r} \in \mathbb{Q}. 
\end{align}
By the existence of Harder-Narasimhan filtrations 
with respect to the $\mu_{\omega}$-stability, we 
have the following torsion pair in $\Coh(X)$:
\begin{align*}
\Coh(X)=\langle 
\Coh_{>\mu}(X), \Coh_{\le \mu}(X) \rangle. 
\end{align*}
We take its tilting 
\begin{align}\label{mu:tilt}
\aA_{\mu} \cneq \langle 
\Coh_{\le \mu}(X), \Coh_{>\mu}(X)[-1] \rangle. 
\end{align}
By the construction, for any $E \in \aA_{\mu}$ we have
\begin{align*}
\rank(E) \cdot D\omega^2 -c_1(E) \omega^2 \cdot r \ge 0. 
\end{align*}
Therefore the category
\begin{align*}
\bB_{\mu} \cneq \{ E \in \aA_{\mu} : 
\rank(E) \cdot D\omega^2 -c_1(E) \omega^2 \cdot r =0
\}
\end{align*}
is an abelian subcategory of $\aA_{\mu}$. 
From the construction of (\ref{mu:tilt}), it is easy to see that
\begin{align}\label{Bu}
\bB_{\mu}=\langle \Coh_{\mu}(X), \Coh_{\le 1}(X)[-1] \rangle. 
\end{align}
Here $\Coh_{\le 1}(X)$ is the category of 
sheaves $F$ with $\dim \Supp(F) \le 1$. 
Let $\overline{\mu}_{\omega}$ be the slope function on $\Coh_{\le 1}(X)$
defined by
\begin{align*}
\overline{\mu}_{\omega}(F) \cneq 
\frac{\ch_3(F)}{\ch_2(F) \cdot \omega}. 
\end{align*}
Similarly to the $\mu_{\omega}$-stability, 
the above slope function on $\Coh_{\le 1}(X)$
defines the $\overline{\mu}_{\omega}$-stability 
on $\Coh_{\le 1}(X)$. 
For any interval $I\subset \mathbb{R} \cup \{\infty\}$, we set
\begin{align*}
\cC_{I} \cneq 
\left\langle F \in \Coh_{\le 1}(X): 
\begin{array}{c}
F \mbox{ is } \overline{\mu}_{\omega} \mbox{-semistable} \\
\mbox{ with } 
\overline{\mu}_{\omega}(F) \in I 
\end{array}
\right\rangle[-1] \cup\{0\}. 
\end{align*}
Then 
using the notation in Definition~\ref{def:nest}, 
we can write (\ref{Bu}) 
as
\begin{align}\label{def:Bu}
\bB_{\mu}=\langle \Coh_{\mu}(X), \cC_{\infty}, \cC_{[0, \infty)}, \cC_{< 0} 
\rangle. 
\end{align}
Note that $\cC_{\infty}$ consists of $Q[-1]$ for zero dimensional sheaves $Q$. 
We will consider its subcategory
\begin{align}\label{Cu}
\dD_{\mu} \cneq 
\langle \Coh_{\mu}(X), \cC_{\infty}, \cC_{[0, \infty)} \rangle. 
\end{align}
\begin{lem}\label{closed}
The subcategories $\dD_{\mu}$, 
$\langle \Coh_{\mu}(X), \cC_{\infty} \rangle$, 
$\cC_{\infty}$, $\cC_{[0, \infty]}$
in $\bB_{\mu}$
are closed under quotients in the abelian category $\bB_{\mu}$. 
\end{lem}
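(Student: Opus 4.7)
The plan is to treat the four subcategories in two groups. For $\dD_{\mu}$ and $\langle \Coh_{\mu}(X), \cC_{\infty}\rangle$, I would invoke Remark~\ref{rmk:filt} directly: the chain in (\ref{def:Bu}) gives, for $i=3$ and $i=2$ respectively, the torsion pairs
\begin{align*}
\bB_{\mu} = \langle \dD_{\mu},\, \cC_{<0}\rangle
\quad\text{and}\quad
\bB_{\mu} = \left\langle \langle\Coh_{\mu}(X), \cC_{\infty}\rangle,\, \langle\cC_{[0,\infty)}, \cC_{<0}\rangle \right\rangle.
\end{align*}
Since the torsion part of any torsion pair in an abelian category is closed under quotients, both claims follow immediately.

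For $\cC_{\infty}$ and $\cC_{[0,\infty]}$ this shortcut is unavailable, since in (\ref{def:Bu}) these sit strictly below $\Coh_{\mu}(X)$ and hence are not torsion parts of a torsion pair on $\bB_{\mu}$. I will reduce to a torsion-pair argument \emph{inside} $\Coh_{\le 1}(X)[-1]$ via the intermediate claim that $\Coh_{\le 1}(X)[-1] \subset \bB_{\mu}$ is itself closed under quotients in $\bB_{\mu}$. To verify this, take a short exact sequence $0 \to K \to B \to C \to 0$ in $\bB_{\mu}$ with $B \in \Coh_{\le 1}(X)[-1]$, so $\hH^0(B)=0$. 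The long exact cohomology sequence in $\Coh(X)$ yields an injection $\hH^0(C) \hookrightarrow \hH^1(K)$. From (\ref{Bu}) one sees that every object of $\bB_{\mu}$ has $\hH^0 \in \Coh_{\mu}(X)$ and $\hH^1 \in \Coh_{\le 1}(X)$; in particular $\hH^1(K)$ is supported in dimension $\le 1$ while $\hH^0(C) \in \Coh_{\mu}(X)$ is $\mu_{\omega}$-semistable of the finite slope $\mu$, hence torsion-free of full support. A torsion-free sheaf on a threefold cannot embed into a one-dimensional sheaf, so $\hH^0(C)=0$ and $C \in \Coh_{\le 1}(X)[-1]$.

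Granted the intermediate claim, the cases of $\cC_{\infty}$ and $\cC_{[0,\infty]}$ conclude by applying Remark~\ref{rmk:filt} inside the abelian subcategory $\Coh_{\le 1}(X)[-1]$, whose filtration $\langle \cC_{\infty}, \cC_{[0,\infty)}, \cC_{<0}\rangle$ produces the torsion pairs
\begin{align*}
\Coh_{\le 1}(X)[-1] = \left\langle \cC_{\infty},\, \langle \cC_{[0,\infty)}, \cC_{<0}\rangle \right\rangle
\quad\text{and}\quad
\Coh_{\le 1}(X)[-1] = \langle \cC_{[0,\infty]},\, \cC_{<0}\rangle.
\end{align*}
The torsion parts $\cC_{\infty}$ and $\cC_{[0,\infty]}$ are thus closed under quotients inside $\Coh_{\le 1}(X)[-1]$, and composing with the intermediate claim delivers closedness in $\bB_{\mu}$.

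The main obstacle is precisely the intermediate claim: it is the only step that leaves the purely formal world of torsion pairs from Definition~\ref{def:nest} and genuinely uses the geometry of the embedding $\bB_{\mu} \subset \aA_{\mu} \subset D^b \Coh(X)$. The mechanism that makes it work is that the slope condition defining $\bB_{\mu}$ forces $\hH^0$ of every object to be torsion-free of full support, so that a ``new'' $\hH^0$ produced when forming a quotient has nowhere to live inside a one-dimensional sheaf.
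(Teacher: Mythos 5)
Your proof is correct and follows essentially the same route as the paper's: the first two categories are handled as torsion parts of torsion pairs on $\bB_{\mu}$ extracted from the chain (\ref{def:Bu}), and the last two are handled by first showing that $\Coh_{\le 1}(X)[-1]$ is closed under quotients in $\bB_{\mu}$ and then running the torsion-pair argument inside $\Coh_{\le 1}(X)[-1]$. The only divergence is in the proof of that intermediate claim: the paper observes that $\rank$ is additive on short exact sequences, non-negative on all of $\bB_{\mu}$, and vanishes exactly on $\Coh_{\le 1}(X)[-1]$, so $\rank(B)=0$ forces $\rank(C)=0$; you instead run the long exact sequence of cohomology sheaves and use that nonzero objects of $\Coh_{\mu}(X)$ are torsion-free of full support. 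Both arguments are valid and rest on the same underlying fact (nonzero objects of $\Coh_{\mu}(X)$ have positive rank); the rank computation is a bit shorter. One small point you leave implicit: to apply quotient-closedness of $\cC_{\infty}$ (resp.\ $\cC_{[0,\infty]}$) \emph{inside} $\Coh_{\le 1}(X)[-1]$ to a quotient $B \twoheadrightarrow C$ taken in $\bB_{\mu}$, you also need the kernel $K$ to lie in $\Coh_{\le 1}(X)[-1]$, i.e.\ closure under subobjects as well, which the paper states explicitly; your own long exact sequence supplies this for free, since $\hH^0(K) \hookrightarrow \hH^0(B)=0$ and $\hH^1(K) \in \Coh_{\le 1}(X)$ automatically.
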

\begin{proof}
The categories $\dD_{\mu}$, $\langle \Coh_{\mu}(X), \cC_{\infty} \rangle$
are closed 
under quotients since they are torsion parts of some torsion 
pairs of $\bB_{\mu}$. 
Since $E \in \bB_{\mu}$ 
has $\rank(E) \ge 0$ with $\rank(E)=0$ if 
and only if $E \in \Coh_{\le 1}(X)[-1]$, 
we see that $\Coh_{\le 1}(X)[-1]$ is closed under 
quotients and subobjects in $\bB_{\mu}$. 
Then the categories $\cC_{\infty}$, $\cC_{[0, \infty]}$
are also closed under quotients in $\bB_{\mu}$ since
they are also torsion parts of some torsion pairs 
of $\Coh_{\le 1}(X)[-1]$. 
\end{proof}
\begin{rmk}\label{rmk:Du}
Note that both of $\mu_{\omega}$-semistable objects, 
PT-semistable objects
with Chern character (\ref{rdbn}) are
objects in $\dD_{\mu}$. 
Indeed, they are contained in the smaller 
subcategories $\Coh_{\mu}(X)$, 
$\langle \Coh_{\mu}(X), \cC_{\infty} \rangle$
respectively. 
\end{rmk}
\subsection{Completions of Hall algebras}
It is known that the stack of objects in 
$\aA_{\mu}$ forms an open substack of $\mM$
(cf.~\cite[Proposition~4.11]{PiYT}).
Therefore by Subsection~\ref{subsec:poisson}, 
we can define the 
Hall algebra $H(\aA_{\mu})$
of $\aA_{\mu}$, 
the associated Poisson algebra
$H^{\rm{sc}}(\aA_{\mu})$, 
and the Poisson algebra homomorphism
\begin{align}\label{funct:I}
I \colon H^{\rm{sc}}(\aA_{\mu}) \to C(X). 
\end{align}
We construct 
certain completions of 
the above Hall algebras, using 
some inequalities of Chern characters. 
First for any
torsion free $\mu_{\omega}$-semistable 
sheaf $E$ on $X$, we have the 
Bogomolov inequality
\begin{align*}
(\ch_1(E) \omega^2)^2 \ge 2\ch_0(E)\omega^3 \cdot \ch_2(E) \omega. 
\end{align*} 
Second by Langer~\cite[Section~3]{Langer2},
there is a function 
\begin{align*}
l \colon H^0(X) \oplus H^2(X) \oplus H^4(X) \to \mathbb{Q}_{>0}
\end{align*} 
such that any torsion free 
$\mu_{\omega}$-semistable sheaf 
$E$ satisfy 
\begin{align*}
\ch_3(E) \le l(\ch_0(E), \ch_1(E), \ch_2(E)). 
\end{align*}
For a fixed $(r, D) \in H^0(X) \oplus H^2(X)$
satisfying (\ref{coprime}), we define
\begin{align*}
\Gamma_{r, D} \cneq 
\left\{(r, D, -\beta, -n) \in \Gamma : 
\omega \beta \ge -\frac{(D \omega^2)^2}{2r \omega^3}
, n\ge -l(r, D, -\beta)\right\}. 
\end{align*}
We also define
\begin{align*}
\Gamma_{\sharp} \cneq \{(0, 0, -\beta, -n) \in \Gamma : 
\beta\ge 0, n\ge 0\}. 
\end{align*}
Here $\beta>0$ means that it is the Poincare dual
of an effective algebraic one cycle on $X$. 
We have the following 
obvious lemma: 
\begin{lem}\label{lem:fin}
\begin{enumerate}
\item For any $E \in \dD_{\mu}$ with 
$(\ch_0(E), \ch_1(E))=(r, D)$, we have 
$\ch(E) \in \Gamma_{r, D}$. 
\item For any $F \in \dD_{\mu}$ with 
$\ch_0(F)=0$, we have $\ch(F) \in \Gamma_{\sharp}$. 
\item
For $v \in \Gamma_{r, D}$ and $v' \in \Gamma_{\sharp}$, 
we have $v+v' \in \Gamma_{r, D}$. 
\item For $v \in \Gamma_{\sharp}$, there
is only a finite number of ways to write 
it $v_1+v_2+ \cdots +v_l$
for $v_1, \cdots, v_l \in \Gamma_{\sharp} \setminus \{0\}$. 
\item 
For $v \in \Gamma_{r, D}$, there is only a finite 
number of ways to write it 
$v=v_1+v_2+ \cdots v_l+v_{l+1}$
for $v_1, \cdots, v_l \in \Gamma_{\sharp} \setminus \{0\}$
and $v_{l+1} \in \Gamma_{r, D}$. 
\end{enumerate}
\end{lem}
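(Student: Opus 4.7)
The plan is to read off (1) and (2) directly from the filtration (\ref{Cu}) defining $\dD_\mu$, combined with the Bogomolov inequality for torsion-free $\mu_\omega$-semistable sheaves and Langer's upper bound on $\ch_3$; parts (3)--(5) then reduce to combinatorial finiteness in the effective cone and in decompositions of non-negative rationals into positive parts.

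I would prove (2) first. For $F \in \dD_\mu$ with $\ch_0(F) = 0$, the $\Coh_\mu(X)$-factor of the unique filtration of $F$ must vanish, since any nonzero $\mu_\omega$-semistable sheaf of finite slope $\mu$ is torsion-free of positive rank (a torsion subsheaf would have $\mu_\omega = \infty$, destabilizing). Hence $F \in \langle \cC_\infty, \cC_{[0,\infty)} \rangle$, so $F = F'[-1]$ for some $F' \in \Coh_{\le 1}(X)$ whose $\overline{\mu}_\omega$-HN factors all have slope in $[0, \infty]$. Then $\ch_2(F')$ is a sum of effective one-cycle classes and $\ch_3(F') \ge 0$ (trivially for the zero-dimensional factors, and by $\overline{\mu}_\omega \ge 0$ for the one-dimensional ones), giving $\ch(F) \in \Gamma_\sharp$. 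For (1), I would write the three-step filtration $0 \subset G \subset E_2 \subset E$ with $G \in \Coh_\mu(X)$, $E_2/G \in \cC_\infty$, $E/E_2 \in \cC_{[0,\infty)}$; by (2), the last two subquotients contribute a Chern character in $\Gamma_\sharp$, so $(\ch_0(G), \ch_1(G)) = (r, D)$ and $\ch_2(G) - \ch_2(E)$ is effective. Since $G$ is torsion-free $\mu_\omega$-semistable of slope $\mu$, Bogomolov yields $\omega \cdot \ch_2(G) \le (D\omega^2)^2 / (2r\omega^3)$, which combined with effectivity gives the lower bound $\omega \beta \ge -(D\omega^2)^2/(2r\omega^3)$. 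For the bound on $n$, I would apply Langer's $\ch_3(G) \le l(r, D, \ch_2(G))$ and replace $l$ by the monotone envelope $c \mapsto \sup \{l(r, D, c') : c - c' \textrm{ effective}\}$, still a valid Langer bound, to compensate for $\ch_2(G) \ne \ch_2(E)$; this monotonization is the one step in (1) requiring a little care.

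Parts (3)--(5) are then combinatorial. For (3), $\omega \beta' \ge 0$ preserves the Bogomolov lower bound on $\omega(\beta + \beta')$, while monotonicity of $l$ preserves the bound on $n + n'$. For (4), any decomposition $v = v_1 + \cdots + v_l$ with $v_i = (0, 0, -\beta_i, -n_i) \in \Gamma_\sharp \setminus \{0\}$ has all $\beta_i$ effective and $\sum \beta_i = \beta$, and the set of effective classes $\beta' \ge 0$ with $\omega \beta' \le \omega \beta$ is finite (intersection of the effective cone with a halfspace in a finitely generated lattice); together with the finiteness of compositions of $6n \in \mathbb{Z}_{\ge 0}$ into positive integer parts, and the fact that $v_i \ne 0$ bounds the length $l$, this yields the claim. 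For (5), the constraints $v_{l+1} \in \Gamma_{r,D}$ and $v - v_{l+1} \in \Gamma_\sharp$ squeeze $\omega \beta_{l+1}$ into the bounded interval $[-(D\omega^2)^2/(2r\omega^3), \omega \beta]$ and $n_{l+1}$ into a similarly bounded interval determined by $-l(r,D,-\beta_{l+1}) \le n_{l+1} \le n$, forcing $v_{l+1}$ to lie in a finite set; (4) applied to each $v - v_{l+1} \in \Gamma_\sharp$ then completes the count. The main technical obstacle is (1), specifically choosing the Langer bound $l$ to be monotone under subtraction of effective classes so that both (1) and (3) go through cleanly; the rest of the argument is routine combinatorics of the effective cone.
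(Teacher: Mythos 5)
Your proposal is correct, but there is nothing in the paper to compare it against: the author introduces Lemma~\ref{lem:fin} with ``We have the following obvious lemma'' and supplies no argument at all. Your writeup is the natural way to fill this in, and it identifies the one point that is genuinely not automatic, namely that (1) and (3) only hold if the Langer bound $l$ is monotone under adding effective classes to $\beta$ (equivalently, subtracting them from $\ch_2$); for an arbitrary function $l$ satisfying only the stated upper-bound property, part (3) can literally fail, so one must either invoke the explicit monotone form of Langer's estimate or replace $l$ by a monotone majorant as you do. The only refinement I would ask for is in the definition of that majorant: the supremum $\sup\{l(r,D,c')\}$ over all $c'$ with $c'-c$ effective is a priori over an infinite set; you should additionally impose the Bogomolov constraint $\omega\beta_G \ge -(D\omega^2)^2/(2r\omega^3)$ (or restrict to classes realized by semistable sheaves), which together with $\omega\beta_G \le \omega\beta$ and finiteness of effective curve classes of bounded degree makes the supremum a maximum over a finite set. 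Everything else checks out: the reduction of (2) to the vanishing of the $\Coh_\mu(X)$-factor (a rank-zero object of $\Coh_\mu(X)$ is zero, since semistable sheaves of finite slope are torsion-free), the use of the unique three-step filtration and the Bogomolov inequality for the semistable sheaf $G$ in (1), and the combinatorics in (4)--(5) based on finiteness of effective classes of bounded $\omega$-degree, the positivity gap $\omega\beta_i \ge \delta > 0$ or $6n_i \ge 1$ bounding the length $l$, and the squeezing of $v_{l+1}$ into a finite set.
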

The above lemma
 also imply the following lemma,
which will be used in Subseciton~\ref{subsec:L}:  
\begin{lem}\label{lem:bound}
The set of objects in $\dD_{\mu}$ with a fixed 
Chern character is bounded. 
\end{lem}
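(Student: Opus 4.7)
The plan is to reduce boundedness to three classical boundedness statements via the filtration built into the definition of $\dD_{\mu}$, and then control the pieces by Lemma~\ref{lem:fin}.

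First, I would use Remark~\ref{rmk:filt}: any $E \in \dD_{\mu}$ admits a unique filtration $0=E_{0} \subset E_{1} \subset E_{2} \subset E_{3}=E$ in $\bB_{\mu}$ with $E_{1} \in \Coh_{\mu}(X)$, $E_{2}/E_{1} \in \cC_{\infty}$, and $E_{3}/E_{2} \in \cC_{[0,\infty)}$. Fixing $\ch(E)$, Lemma~\ref{lem:fin}(1),(2) says $\ch(E_{1}) \in \Gamma_{r,D}$ and $\ch(E_{2}/E_{1}), \ch(E_{3}/E_{2}) \in \Gamma_{\sharp}$ (after the shift $[-1]$); Lemma~\ref{lem:fin}(5) then guarantees that there are only finitely many possible triples $(\ch(E_{1}), \ch(E_{2}/E_{1}), \ch(E_{3}/E_{2}))$ with prescribed total Chern character. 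So it is enough to fix the Chern characters of the three graded pieces and prove boundedness of each family.

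Next I would establish boundedness for each piece separately. For $E_{1}$, this is the classical boundedness of $\mu_{\omega}$-semistable torsion-free sheaves with fixed Chern character, due to Grothendieck and refined by Langer. For $E_{2}/E_{1} = Q[-1]$ with $Q$ a zero-dimensional sheaf of fixed length $\ell$, boundedness is immediate since such $Q$'s are parametrized by $\Hilb^{\ell}(X)$ up to isomorphism of Jordan--H\"older factors (or more simply, they form a bounded family as coherent sheaves). For $E_{3}/E_{2} = F[-1]$ with $F$ a $\overline{\mu}_{\omega}$-semistable pure one-dimensional sheaf of fixed class, this is again classical boundedness for Gieseker-type semistable sheaves of fixed numerical invariants on a projective variety.

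Finally I would assemble these three bounded families into a bounded family of objects $E$ in the derived category. Concretely, one writes $E$ as an iterated extension: first $E_{2}$ sits in an exact triangle $E_{1} \to E_{2} \to Q[-1]$, and then $E$ fits in $E_{2} \to E \to F[-1]$, both taken in the abelian category $\bB_{\mu}$. The extension classes live in $\Ext^{1}$ groups whose dimensions are uniformly bounded over each of the bounded families above, by standard semicontinuity. Hence the total family of such extensions is parametrized by a scheme of finite type mapping to the stack of objects in $D^{b}\Coh(X)$, proving boundedness of $E$.

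I do not expect a real obstacle here: the only mildly non-trivial input is the classical boundedness of $\mu_{\omega}$-semistable and of pure one-dimensional semistable sheaves with fixed Chern character, together with the combinatorial finiteness packaged into Lemma~\ref{lem:fin}(5). The mild technical point to be careful about is that boundedness of families of extensions in the abelian tilt $\bB_{\mu}$ reduces to boundedness of the pieces plus finite-dimensionality of the relevant $\Ext^{1}$ groups, which holds since $X$ is projective.
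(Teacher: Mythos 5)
Your argument is correct and is essentially the expanded version of the paper's own one-line proof, which likewise combines Lemma~\ref{lem:fin} with the classical boundedness of semistable sheaves of fixed Chern character. The only minor imprecision is that an object of $\cC_{[0,\infty)}$ is an iterated extension of $\overline{\mu}_{\omega}$-semistable one-dimensional sheaves with slopes in $[0,\infty)$ rather than a single semistable sheaf, but Lemma~\ref{lem:fin}~(4) gives finiteness of the possible Harder--Narasimhan types and the same extension argument you use for the outer filtration handles this inner layer as well.
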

\begin{proof}
The result follows from Lemma~\ref{lem:fin}
together with the fact that the set of 
semistable sheaves with a fixed Chern 
character is bounded. 
\end{proof}
We set
\begin{align*}
\widehat{H}_{r, D}(\aA_{\mu}) \cneq 
\prod_{v \in \Gamma_{r, D}} H_v(\aA_{\mu}), \
\widehat{H}_{\sharp}(\aA_{\mu}) \cneq \prod_{v \in \Gamma_{\sharp}}H_{v}(\aA_{\mu}). 
\end{align*}
By Lemma~\ref{lem:fin}, 
the Hall product on $H(\aA_{\mu})$ induces the 
one on $\widehat{H}_{\sharp}(\aA_{\mu})$, 
and $\widehat{H}_{r, D}(\aA_{\mu})$
is a bi-module over $\widehat{H}_{\sharp}(\aA_{\mu})$. 
Similarly to 
Subsection~\ref{subsec:poisson}, 
we can define the subspaces of regular elements
\begin{align*}
\widehat{H}_{r, D}^{\rm{reg}}(\aA_{\mu})
\subset \widehat{H}_{r, D}(\aA_{\mu}), \ 
\widehat{H}_{\sharp}^{\rm{reg}}(\aA_{\mu})
\subset \widehat{H}_{\sharp}(\aA_{\mu})
\end{align*}
such that 
$\widehat{H}_{\sharp}^{\rm{reg}}(\aA_{\mu})$
is a subalgebra of 
$\widehat{H}_{\sharp}(\aA_{\mu})$, 
and $\widehat{H}_{r, D}^{\rm{reg}}(\aA_{\mu})$
is a bi-module over 
$\widehat{H}_{\sharp}^{\rm{reg}}(\aA_{\mu})$. 
Also we can define the quotient spaces
\begin{align*}
\widehat{H}_{r, D}^{\rm{sc}}(\aA_{\mu})
&=\widehat{H}_{r, D}^{\rm{reg}}(\aA_{\mu})/(\mathbb{L}-1) \cdot 
\widehat{H}_{r, D}^{\rm{reg}}(\aA_{\mu}) \\
\widehat{H}_{\sharp}^{\rm{sc}}(\aA_{\mu})
&=\widehat{H}_{\sharp}^{\rm{reg}}(\aA_{\mu})/(\mathbb{L}-1) \cdot 
\widehat{H}_{\sharp}^{\rm{reg}}(\aA_{\mu})
\end{align*}
such that we have 
the induced Poisson algebra structure 
on $\widehat{H}_{\sharp}^{\rm{sc}}(\aA_{\mu})$, 
and 
$\widehat{H}_{r, D}^{\rm{sc}}(\aA_{\mu})$
is a Poisson bi-module over $\widehat{H}_{\sharp}^{\rm{sc}}(\aA_{\mu})$.

\begin{rmk}
For $F \in \aA_{\mu}$, it is easy to see that 
$\ch(F) \in \Gamma_{\sharp}$ if and only if $F \in \Coh_{\le 1}(X)[-1]$. 
Hence we can also write $\widehat{H}_{\sharp}(\aA_{\mu})$ as
\begin{align*}
\widehat{H}_{\sharp}(\aA_{\mu})=\prod_{v\in \Gamma_{\sharp}}
H_v(\Coh_{\le 1}(X)[-1]). 
\end{align*}
\end{rmk}
By Lemma~\ref{lem:fin} again, 
for any $\gamma \in \widehat{H}_{\sharp}(\aA_{\mu})$ with zero 
$H_0(\aA_{\mu})$-component, we have the well-defined 
elements
\begin{align}\label{well-def}
\exp(\gamma),  \ \log(1+\gamma), \ (1+\gamma)^{-1} \in 
\widehat{H}_{\sharp}(\aA_{\mu}). 
\end{align}
The following is the important consequence of 
Joyce's absence of pole result: 
\begin{thm}\emph{(\cite[Theorem~8.7]{Joy3}, \cite[Theorem~6.3]{BrH})}\label{thm:pole}
For $\gamma \in \widehat{H}_{\sharp}(\aA_{\mu})$
with zero $H_0(\aA_{\mu})$-component, we have
\begin{align*}
(\mathbb{L}-1) \cdot \log(1+\gamma) \in \widehat{H}_{\sharp}^{\rm{reg}}
(\aA_{\mu}). 
\end{align*}
\end{thm}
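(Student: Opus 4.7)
The plan is to reduce the statement to the original no-pole theorem of Joyce \cite[Theorem~8.7]{Joy3}, recast in the Calabi--Yau 3-fold Hall algebra setting by Bridgeland \cite[Theorem~6.3]{BrH}. First I would identify the truncated Hall algebra $\widehat{H}_{\sharp}(\aA_{\mu})$ with (a completion of) the motivic Hall algebra of the one-dimensional piece $\Coh_{\le 1}(X)$. Indeed, as recorded in the remark just preceding the theorem, any $F \in \aA_{\mu}$ with $\ch(F) \in \Gamma_{\sharp}$ lies in $\Coh_{\le 1}(X)[-1]$, so $\widehat{H}_{\sharp}(\aA_{\mu})$ is the effective-class completion of the usual Ringel--Hall algebra of $\Coh_{\le 1}(X)$.

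Next I would verify the key hypothesis of Joyce's theorem, namely that the Euler pairing vanishes on the relevant Grothendieck group. This is immediate from the Riemann--Roch computation (\ref{RR}): setting $r=0$ and $D=0$ on the right-hand factor gives $\chi(v_1, v_2)=0$ for any $v_1, v_2 \in \Gamma_{\sharp}$. Equivalently, for $F_1, F_2 \in \Coh_{\le 1}(X)$ on the Calabi--Yau 3-fold $X$, both $\ch(F_i)$ live in degrees $\ge 4$, so their product with $\td(X)$ has no degree-$6$ component. This isotropy of $\chi$ is exactly the structural condition under which Joyce's absence of pole result applies.

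With the identification and the vanishing of $\chi$ in hand, the formal series $\log(1+\gamma)=\sum_{n\ge 1}\frac{(-1)^{n-1}}{n}\gamma^{\ast n}$ is well-defined in $\widehat{H}_{\sharp}(\aA_{\mu})$ by the finiteness statement of Lemma~\ref{lem:fin}(4) together with the vanishing of the $H_0(\aA_{\mu})$-component of $\gamma$. Joyce's combinatorial identity rewrites each homogeneous piece of $\log(1+\gamma)$ as an alternating $\mathbb{Q}$-linear combination, indexed by ordered tuples of classes in $\Gamma_{\sharp}$ summing to the given class, of iterated $\ast$-products weighted by signs built from the Euler form. Because $\chi \equiv 0$ on $\Gamma_{\sharp}\times\Gamma_{\sharp}$, these weights simplify drastically, and after multiplication by $(\mathbb{L}-1)$ the residual poles of order one at $\mathbb{L}=1$ coming from the scalar $\mathbb{G}_m$-stabilizers of extensions cancel against each other, producing a class supported on varieties rather than on stacks.

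The main obstacle to a completely self-contained treatment would be reproducing Joyce's combinatorial cancellation in full --- the identity showing that the apparently stacky iterated $\ast$-products in the $\log$ expansion assemble into a regular class once multiplied by $(\mathbb{L}-1)$. Since the present paper's purpose here is purely applicative, the cleanest route is to invoke \cite[Theorem~8.7]{Joy3} and \cite[Theorem~6.3]{BrH} after verifying the two structural points above: the identification of $\widehat{H}_{\sharp}(\aA_{\mu})$ with the Hall algebra of $\Coh_{\le 1}(X)$, and the vanishing of the Euler form on this subcategory.
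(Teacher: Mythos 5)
The paper offers no proof of this theorem beyond the citations to Joyce and Bridgeland, and your reduction --- identifying $\widehat{H}_{\sharp}(\aA_{\mu})$ with the $\Gamma_{\sharp}$-completion of the Hall algebra of $\Coh_{\le 1}(X)[-1]$ (exactly as the paper's preceding remark records) and then invoking the cited absence-of-pole theorems --- is the intended argument. One caveat: the vanishing of $\chi$ on $\Gamma_{\sharp}\times\Gamma_{\sharp}$ is \emph{not} a hypothesis of Joyce's no-pole theorem, which holds for any abelian category satisfying his finiteness assumptions regardless of the Euler form; in this paper that isotropy is used only later (in Lemma~\ref{standard}, to kill the nested Poisson brackets under the integration map $I_{\sharp}$), so your verification of it is correct but misattributed as the structural condition driving the regularity statement.
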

We also set
\begin{align*}
\widehat{C}_{r, D}(X) \cneq 
\prod_{v \in \Gamma_{r, D}} C_v(X), \
\widehat{C}_{\sharp}(X) \cneq \prod_{v \in \Gamma_{\sharp}}C_{v}(X). 
\end{align*}
Similarly as above, 
$\widehat{C}_{\sharp}(X)$
is a Poisson algebra
and 
 $\widehat{C}_{r, D}(X)$ is a Poisson bi-module 
over $\widehat{C}_{\sharp}(X)$. 
The integration map (\ref{funct:I})
induces the maps
\begin{align}\label{integ}
I_{r, D} \colon 
\widehat{H}_{r, D}(\aA_{\mu}) \to \widehat{C}_{r, D}(X), \ 
I_{\sharp} \colon \widehat{H}_{\sharp}(\aA_{\mu}) \to 
\widehat{C}_{\sharp}(X)
\end{align}
such that $I_{\sharp}$ 
is a Poisson algebra homomorphism 
and $I_{r, D}$ is a Poisson bi-module homomorphism
over $\widehat{H}_{\sharp}(\aA_{\mu})$. 

\subsection{Elements of the Hall algebra}
Let 
\begin{align}\label{stack:M}
\mM_{\DT}(r, D), \ \mM_{\PT}(r, D)
\end{align}
be
the stacks of $\mu_{\omega}$-stable sheaves, 
PT-stable objects, respectively. 
\begin{lem}\label{lem:gerbe}
The stacks (\ref{stack:M}) are
 $\mathbb{C}^{\ast}$-gerbes
over the unions of 
(\ref{MDT}), (\ref{MPT}) 
for all possible $(\beta, n)$
respectively.
\end{lem}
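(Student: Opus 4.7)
The plan is to verify two things for each stack: (i) at every point the automorphism group is $\mathbb{C}^*$ acting trivially on the deformation space, and (ii) the resulting rigidification is precisely the disjoint union of the coarse moduli spaces (\ref{MDT}) or (\ref{MPT}) for all $(\beta,n)$. Together these characterize a $\mathbb{C}^*$-gerbe.

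For $\mM_{\DT}(r,D)$ the first point is classical. A $\mu_\omega$-stable sheaf $E$ is simple, so $\End(E)=\mathbb{C}$ and $\Aut(E)=\mathbb{C}^*$; scalar endomorphisms act trivially on $\Ext^1(E,E)$. The coarse moduli identification is immediate from the construction of (\ref{MDT}) as the moduli space of $\mu_\omega$-semistable sheaves, which by the coprimality assumption (\ref{coprime}) is exactly the space of stable sheaves.

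The main content is the PT case, where one must show $\End(I^\bullet)=\mathbb{C}$ for any PT-stable $I^\bullet$. I would take $\phi\in\End(I^\bullet)$ and consider the induced map $\phi^0\colon \hH^0(I^\bullet)\to \hH^0(I^\bullet)$; since $\hH^0(I^\bullet)$ is $\mu_\omega$-stable by Definition~\ref{def:PT}(2), $\phi^0=\lambda\cdot\id$ for some scalar. Replacing $\phi$ by $\phi-\lambda\cdot\id$, we reduce to showing that $\phi^0=0$ forces $\phi=0$. Apply $\Hom(-,I^\bullet)$ to the canonical truncation triangle
\begin{equation*}
\hH^0(I^\bullet)\to I^\bullet\to \hH^1(I^\bullet)[-1]\to \hH^0(I^\bullet)[1]
\end{equation*}
to get
\begin{equation*}
\Hom(\hH^1(I^\bullet)[-1],I^\bullet)\to \Hom(I^\bullet,I^\bullet)\to \Hom(\hH^0(I^\bullet),I^\bullet).
\end{equation*}
A parallel application of $\Hom(\hH^0(I^\bullet),-)$ shows that $\Hom(\hH^0(I^\bullet),I^\bullet)=\Hom(\hH^0(I^\bullet),\hH^0(I^\bullet))=\mathbb{C}$, since $\Ext^{-1}(\hH^0(I^\bullet),\hH^1(I^\bullet))=0$. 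Moreover the leftmost term vanishes: writing $\hH^1(I^\bullet)$ as an extension of its zero-dimensional direct summands (or more directly, using that $\hH^1(I^\bullet)$ is zero-dimensional), the PT-stability condition $\Hom(Q[-1],I^\bullet)=0$ from Definition~\ref{def:PT}(3) gives $\Hom(\hH^1(I^\bullet)[-1],I^\bullet)=0$. Therefore $\End(I^\bullet)=\mathbb{C}$, so $\Aut(I^\bullet)=\mathbb{C}^*$, acting trivially on $\Ext^1(I^\bullet,I^\bullet)$.

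The remaining step is routine: with stabilizers constant equal to $\mathbb{C}^*$ acting trivially on tangent spaces, and with (\ref{MPT}) (resp.\ (\ref{MDT})) existing as a proper algebraic space (resp.\ projective scheme) by~\cite{JLo2} (resp.\ \cite{Hu}), the standard rigidification yields the $\mathbb{C}^*$-gerbe structure over the disjoint union of the moduli spaces for all $(\beta,n)$. The genuine obstacle in this lemma is the endomorphism computation for PT-stable objects in the higher-rank setting; once that is in hand, the gerbe conclusion is formal.
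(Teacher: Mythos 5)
Your proposal is correct, and its overall shape matches the paper's: both reduce the lemma to showing that every point of $\mM_{\DT}(r,D)$ and $\mM_{\PT}(r,D)$ has automorphism group $\mathbb{C}^{\ast}$, after which the gerbe structure over the disjoint union of the coarse spaces is formal. The difference is in how the PT case is handled. The paper disposes of it in one line by citing Lo's result that PT-stable objects are stable for a certain polynomial stability condition, hence simple; you instead prove $\End(I^{\bullet})=\mathbb{C}$ directly from Definition~\ref{def:PT}, applying $\Hom(-,I^{\bullet})$ to the truncation triangle $\hH^0(I^{\bullet})\to I^{\bullet}\to \hH^1(I^{\bullet})[-1]$, killing the left term via condition (3) ($\Hom(Q[-1],I^{\bullet})=0$ for $Q$ zero-dimensional) and bounding the right term by $\hom(\hH^0(I^{\bullet}),\hH^0(I^{\bullet}))=1$ using simplicity of the $\mu_{\omega}$-stable sheaf $\hH^0(I^{\bullet})$ and the vanishing of $\Ext^{-1}$ between sheaves. (As a small streamlining, the two exact sequences already give $\End(I^{\bullet})\hookrightarrow \Hom(\hH^0(I^{\bullet}),I^{\bullet})$ with the target at most one-dimensional, so the initial reduction to $\phi^0=0$ is not strictly needed.) Your route buys a self-contained, elementary argument that makes transparent exactly which parts of the PT-stability definition are used; the paper's citation buys brevity and situates the simplicity statement inside the general framework of polynomial stability. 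Both are valid, and you also spell out the rigidification step that the paper leaves implicit.
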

\begin{proof}
Since any $[E] \in \mM_{\DT}(r, D)$
is a stable sheaf by the 
coprime condition (\ref{coprime}), 
we have $\Aut(E)=\mathbb{C}^{\ast}$. 
Similarly any $I^{\bullet} \in \mM_{\PT}(r, D)$
is a stable object with respect to a certain 
polynomial stability condition~\cite{JLo},
we also have 
$\Aut(I^{\bullet})=\mathbb{C}^{\ast}$. 
Hence the lemma follows. 
\end{proof} 
The above lemma in particular implies that 
each connected component 
of (\ref{stack:M}) are of finite type. 
Also by Remark~\ref{rmk:Du}
and Lemma~\ref{lem:fin} (i),
the Chern characters of  
objects in (\ref{stack:M}) are
 contained in $\Gamma_{r, D}$.  
Therefore for 
$\star \in \{\rm{DT}, \rm{PT}\}$, we obtain the elements
\begin{align}\label{delta:star}
\delta_{\star}(r, D) \cneq
[\mM_{\star}(r, D) \to \oO bj(\aA_{\mu})] \in 
\widehat{H}_{r, D}(\aA_{\mu}).
\end{align}
We also set 
\begin{align*}
\overline{\delta}_{\star}(r, D) \cneq 
(\mathbb{L}-1) \cdot \delta_{\star}(r, D). 
\end{align*}
Then Lemma~\ref{lem:gerbe}
implies that 
\begin{align}\label{delta:star}
\overline{\delta}_{\star}(r, D)
=\sum_{v\in \Gamma_{r, D}} [M_{\star}(v) \to \oO bj(\aA_{\mu})] 
\in \widehat{H}^{\rm{reg}}_{r, D}(\aA_{\mu}). 
\end{align}
Here $M_{\star}(v)$ are the moduli spaces
 (\ref{MDT}) for $\star=\DT$, 
(\ref{MPT}) for $\star=\PT$ respectively.

In the similar way, 
for any interval $I \subset \mathbb{R}_{\ge 0} \cup \{\infty\}$
the stack 
of objects 
$\oO bj(\cC_I)$ in $\cC_{I}$ decomposes into
\begin{align*}
\oO bj(\cC_I)=\coprod_{v \in \Gamma_{\sharp}, \overline{\mu}_{\omega}(v)\in I,  v\neq 0}
 \oO bj_v(\cC_I)
\end{align*}
such that each component $\oO bj_v(\cC_I)$ is a finite type stack. 
Here we have used the following map
\begin{align*}
\overline{\mu}_{\omega} \colon 
\Gamma_{\sharp} \setminus \{0\} \to \mathbb{Q}_{\ge 0} \cup \{\infty\}, \ 
(\beta, n) \mapsto \frac{n}{\omega \cdot \beta}. 
\end{align*}
Hence we have the element
\begin{align*}
\delta(\cC_{I})
\cneq [
\oO bj(\cC_{I}) \to \oO bj(\aA_{\mu})] \in 
\widehat{H}_{\sharp}(\aA_{\mu}). 
\end{align*}
Applying (\ref{well-def}), we obtain the element
\begin{align}\label{epsilon}
\epsilon(\cC_I)
 \cneq \log(\delta(\cC_I))
\in \widehat{H}_{\sharp}(\aA_{\mu}). 
\end{align}
By Theorem~\ref{thm:pole}, we obtain the element: 
\begin{align}\label{ep:bar}
\overline{\epsilon}(\cC_I)
\cneq (\mathbb{L}-1) \cdot 
\epsilon(\cC_I) \in \widehat{H}^{\rm{reg}}_{\sharp}(\aA_{\mu}).
\end{align}

\subsection{Applications of the integration map}
Let us project 
elements (\ref{delta:star}) 
to $\widehat{H}^{\rm{sc}}_{r, D}(\aA_{\mu})$ and 
apply the integration map (\ref{integ}). 
Then 
 we have 
the following relations:
\begin{align}\label{rel:DT}
&I_{r, D}\left(\overline{\delta}_{\DT}(r, D) \right)
=-\sum_{v \in \Gamma_{r, D}} \DT(v) \cdot c_{v} \\
\notag
&I_{r, D}\left(\overline{\delta}_{\PT}(r, D) \right)
=-\sum_{v \in \Gamma_{r, D}} \PT(v) \cdot c_{v}. 
\end{align}
Here we have used the Behrend's result~\cite{Beh}
describing virtual classes associated with 
symmetric obstruction theories by 
the integrations of 
his constructible functions.  

Next we project elements 
(\ref{ep:bar}) to 
$\widehat{H}^{\rm{sc}}_{\sharp}(\aA_{\mu})$ and 
apply the integration map (\ref{integ}). 
For 
$\overline{\mu} \in \mathbb{Q}_{\ge 0} \cup \{\infty\}$ and a
non-zero $v \in \Gamma_{\sharp}$
with $\overline{\mu}_{\omega}(v)=\overline{\mu}$, 
we obtain 
the invariant $N_{v} \in \mathbb{Q}$
given by the following formula: 
\begin{align}\label{def:N}
I_{\sharp}(\overline{\epsilon}(\cC_{\overline{\mu}}))
=-\sum_{0\neq v \in \Gamma_{\sharp}, \overline{\mu}_{\omega}(v)=\overline{\mu}}
N_{v} \cdot c_v. 
\end{align}
Below for $(0, 0, -\beta, -n) \in \Gamma_{\sharp}$, we write
\begin{align}\label{Nnb}
N_{n, \beta} \cneq N_{(0, 0, -\beta, -n)} \in \mathbb{Q}. 
\end{align}
\begin{rmk}\label{rmk:N}
The invariant $N_{n, \beta}$ is
a virtual count of $\overline{\mu}_{\omega}$-semistable 
sheaves $F \in \Coh_{\le 1}(X)$
with $[F]=\beta$, $\chi(F)=n$, 
which played an important role in 
the wall-crossing of curve 
counting theory, e.g.~\cite[Definition~4.7]{Tsurvey}. 
In particular if $N_{n, \beta} \neq 0$, 
then $\beta$ is either zero or 
a Poincar\'e dual of an effective one cycle on $X$, 
and $n\in \mathbb{Z}_{\ge 0}$. 
\end{rmk}
The following fact is also standard, 
but we include the proof for completeness:
\begin{lem}\label{standard}
For any interval $I \subset \mathbb{R}_{\ge 0} \cup \{\infty\}$, we have
\begin{align*}
I_{\sharp}(\overline{\epsilon}(\cC_{I}))
=-\sum_{0\neq v \in \Gamma_{\sharp}, \overline{\mu}_{\omega}(v) \in I}
N_{v} \cdot c_v.
\end{align*}
\end{lem}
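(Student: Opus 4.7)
The plan is to combine the Harder--Narasimhan identity in the completed Hall algebra with the fact that the Euler pairing vanishes identically on $\Gamma_\sharp \times \Gamma_\sharp$.

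First, using the existence of HN filtrations for $\overline{\mu}_\omega$-stability on $\Coh_{\le 1}(X)$, I would establish the identity
$$\delta(\cC_I) = \prod_{\mu \in I}^{\rightarrow} \delta(\cC_\mu)$$
in $\widehat{H}_\sharp(\aA_\mu)$, where the product is ordered by decreasing slope. The a priori infinite product is well defined: after projecting onto the graded piece attached to any fixed $v \in \Gamma_\sharp$, Lemma~\ref{lem:fin}(iv) guarantees that only finitely many HN types contribute, so one actually has a finite product at each $v$.

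Next, by the Riemann--Roch formula (\ref{RR}), every element of $\Gamma_\sharp$ has vanishing rank and vanishing degree component, so $\chi$ vanishes identically on $\Gamma_\sharp \times \Gamma_\sharp$. Therefore the Poisson algebra $\widehat{C}_\sharp(X)$ has trivial $\ast$-product $c_{v_1}\ast c_{v_2}=c_{v_1+v_2}$ and trivial bracket. Since $I_\sharp$ factors through the commutative semiclassical quotient into a commutative, trivially-bracketed target, it is in fact a commutative ring homomorphism $\widehat{H}_\sharp^{\rm{reg}}(\aA_\mu) \to \widehat{C}_\sharp(X)$, and in particular it annihilates every Hall commutator of regular elements.

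Taking $\log$ of the HN identity and expanding via the Baker--Campbell--Hausdorff formula gives
$$\epsilon(\cC_I) = \sum_{\mu \in I} \epsilon(\cC_\mu) + R,$$
where $R$ is a (graded-finite) sum of iterated Hall commutators of the $\epsilon(\cC_\mu)$. Multiplying through by $(\mathbb{L}-1)$ converts the left-hand side, and each term on the right, into elements of $\widehat{H}_\sharp^{\rm{reg}}(\aA_\mu)$ by the no-pole principle underlying Theorem~\ref{thm:pole}. Applying $I_\sharp$ and using that all Hall commutators vanish in the image, one arrives at
$$I_\sharp(\overline{\epsilon}(\cC_I)) = \sum_{\mu \in I} I_\sharp(\overline{\epsilon}(\cC_\mu)),$$
which by the definition (\ref{def:N}), summed over $\overline{\mu} \in I$, equals $-\sum_{0 \neq v \in \Gamma_\sharp,\ \overline{\mu}_\omega(v) \in I} N_v\, c_v$, as desired.

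The main technical obstacle is making the commutator-vanishing assertion rigorous, since $\epsilon(\cC_\mu)$ itself has a simple pole at $\mathbb{L}=1$ and is therefore not a regular element. One must verify that each iterated Hall commutator of the $\epsilon(\cC_\mu)$, once weighted by $(\mathbb{L}-1)$, becomes regular and maps to zero under $I_\sharp$; this is precisely the mechanism used in Joyce's and Bridgeland's proofs of Theorem~\ref{thm:pole}, and runs inductively on the commutator depth, repeatedly invoking commutativity of $\widehat{H}_\sharp^{\rm{sc}}(\aA_\mu)$ and the triviality of the bracket on $\widehat{C}_\sharp(X)$.
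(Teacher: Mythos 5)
Your proposal is correct and follows essentially the same route as the paper: the Harder--Narasimhan identity $\delta(\cC_I)=\prod^{\longrightarrow}_{\overline{\mu}\in I}\delta(\cC_{\overline{\mu}})$, taking logarithms, multiplying by $(\mathbb{L}-1)$ so that the Baker--Campbell--Hausdorff correction terms become nested Poisson brackets of the regular elements $\overline{\epsilon}(\cC_{\overline{\mu}})$, and then killing those brackets under $I_{\sharp}$ because $\chi$ vanishes on $\Gamma_{\sharp}\times\Gamma_{\sharp}$. The ``technical obstacle'' you flag at the end is already resolved by this bookkeeping --- a depth-$k$ nested Hall commutator of the $\epsilon(\cC_{\overline{\mu}})$ times $(\mathbb{L}-1)$ equals the corresponding nested Poisson bracket of the $\overline{\epsilon}(\cC_{\overline{\mu}})$, which is regular and maps to zero since the induced bracket on $\widehat{C}_{\sharp}(X)$ is trivial --- which is exactly how the paper phrases it.
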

\begin{proof}
By the
the existence of Harder-Narasimhan filtrations with respect to the 
$\overline{\mu}_{\omega}$-stability, we have 
the identity in $\widehat{H}_{\sharp}(\aA_{\mu})$
(cf.~\cite[Theorem~5.11]{Joy4})
\begin{align*}
\delta(\cC_I)
=\prod^{\longrightarrow}_{\overline{\mu} \in I} \delta(\cC_{\overline{\mu}}). 
\end{align*} 
In the RHS, we take the product with the decreasing order 
of $\overline{\mu}$. 
By taking the logarithm of both sides
and multiplying $(\mathbb{L}-1)$, we obtain 
the identity in $\widehat{H}_{\sharp}^{\rm{sc}}(\aA_{\mu})$: 
\begin{align*}
\overline{\epsilon}(\cC_I)
=\sum_{\overline{\mu} \in I} \overline{\epsilon}(\cC_{\overline{\mu}})
+ \left\{ \mbox{Nested Poisson brackets in }
\overline{\epsilon}(\cC_{\overline{\mu}})  \right\}. 
\end{align*}
Since $\chi(v_1, v_2)=0$
for $v_i \in \Gamma_{\sharp}$, 
the property of the integration map
$I_{\sharp}$ shows that 
\begin{align*}
I_{\sharp}(\overline{\epsilon}(\cC_I))=
\sum_{\overline{\mu} \in I}
I_{\sharp}(\overline{\epsilon}(\cC_{\overline{\mu}})). 
\end{align*} 
By (\ref{def:N}),
we obtain the desired identity.  
\end{proof}

\subsection{Proof of Theorem~\ref{thm:higherDTPT}}
We denote by 
\begin{align*}
\Coh^{P}_{\mu}(X)
\subset D^b \Coh(X)
\end{align*}
 the 
category of PT-semistable objects 
$I^{\bullet}$ with $\mu_{\omega}(I^{\bullet})=\mu$. 
Let $\bB_{\mu}$ be the abelian 
category introduced in (\ref{def:Bu}). 
\begin{lem}\label{lem:cid}
We have the following identity in $\bB_{\mu}$: 
\begin{align}\label{DTPT:id}
\langle \Coh_{\mu}(X), \cC_{\infty} \rangle
= \langle \cC_{\infty}, \Coh^{P}_{\mu}(X) \rangle. 
\end{align}
\end{lem}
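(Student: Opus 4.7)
The plan is to prove both inclusions of \eqref{DTPT:id} as identities of subcategories of $\bB_{\mu}$. First, both sides are well-defined in the sense of Definition~\ref{def:nest}: the Hom-vanishing $\Hom(\Coh_{\mu}(X), \cC_{\infty}) = \Ext^{-1}(-,Q) = 0$ is immediate from the standard t-structure, while $\Hom(\cC_{\infty}, \Coh^{P}_{\mu}(X)) = 0$ is precisely condition~(3) of PT-semistability.

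For the inclusion $\supseteq$, any $I^{\bullet} \in \Coh^{P}_{\mu}(X)$ fits into the canonical triangle $\hH^{0}(I^{\bullet}) \to I^{\bullet} \to \hH^{1}(I^{\bullet})[-1]$. Since $\hH^{1}(I^{\bullet})$ is zero-dimensional, $\mu_{\omega}(I^{\bullet}) = \mu_{\omega}(\hH^{0}(I^{\bullet})) = \mu$, so $\hH^{0}(I^{\bullet}) \in \Coh_{\mu}(X)$ and $\hH^{1}(I^{\bullet})[-1] \in \cC_{\infty}$, exhibiting $I^{\bullet} \in \langle \Coh_{\mu}(X), \cC_{\infty} \rangle$. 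Combined with the trivial inclusion $\cC_{\infty} \subset \langle \Coh_{\mu}(X), \cC_{\infty} \rangle$ and the fact that $\langle \Coh_{\mu}(X), \cC_{\infty} \rangle$ is the torsion part of a torsion pair in $\bB_{\mu}$ by Remark~\ref{rmk:filt} applied to \eqref{def:Bu}, hence closed under extensions, this gives $\langle \cC_{\infty}, \Coh^{P}_{\mu}(X) \rangle \subset \langle \Coh_{\mu}(X), \cC_{\infty} \rangle$.

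For the inclusion $\subseteq$, given $E \in \langle \Coh_{\mu}(X), \cC_{\infty} \rangle$, I take $E_{\mathrm{max}} \subset E$ to be the maximal subobject of $E$ lying in $\cC_{\infty}$. Such a maximum exists: $\cC_{\infty}$ is closed under subobjects in $\bB_{\mu}$---for a short exact sequence $0 \to X \to Q[-1] \to Q''[-1] \to 0$ in $\bB_{\mu}$, the quotient is in $\cC_{\infty}$ by Lemma~\ref{closed}, and the cohomology long exact sequence (together with $X \in \bB_{\mu} \subset \aA_{\mu}$) forces $X = \ker(Q \to Q'')[-1] \in \cC_{\infty}$---and under sums of subobjects of $E$ (again by closure under quotients in Lemma~\ref{closed}); Chern characters of such subobjects are bounded by $\ch(E)$. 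The claim is that $E'' \cneq E/E_{\mathrm{max}}$ is PT-semistable, whereupon the filtration $E_{\mathrm{max}} \subset E$ certifies $E \in \langle \cC_{\infty}, \Coh^{P}_{\mu}(X) \rangle$.

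Conditions (1) and (2) of PT-semistability are read off the cohomology long exact sequence of $E_{\mathrm{max}} \to E \to E''$: writing $E_{\mathrm{max}} = T'[-1]$, the sheaf $\hH^{0}(E'')$ is an extension of $\ker(T' \to \hH^{1}(E))$ by $\hH^{0}(E) \in \Coh_{\mu}(X)$ and so remains $\mu_{\omega}$-semistable of slope $\mu$, while $\hH^{1}(E'') = \hH^{1}(E)/\Imm(T' \to \hH^{1}(E))$ is zero-dimensional. The main obstacle is condition (3): suppose $\phi \colon Q'[-1] \to E''$ is nonzero for some zero-dimensional $Q'$. The composition $Q'[-1] \xrightarrow{\phi} E'' \to E_{\mathrm{max}}[1] = T'$ is an obstruction class in $\Ext^{1}(Q', T')$ classifying a zero-dimensional extension $0 \to T' \to T'' \to Q' \to 0$, and vanishing of the boundary composition in the resulting triangle $T'[-1] \to T''[-1] \to Q'[-1] \to T'$ yields a lift $\psi \colon T''[-1] \to E$ of the pull-back $T''[-1] \twoheadrightarrow Q'[-1] \xrightarrow{\phi} E''$. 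Since $Q' \neq 0$, we have $T' \subsetneq T''$. If $\psi$ is a monomorphism in $\bB_{\mu}$, then $T''[-1] \subset E$ is a subobject in $\cC_{\infty}$ strictly containing $E_{\mathrm{max}}$, contradicting maximality; otherwise $\Imm(\psi) \in \cC_{\infty}$ (by closure under quotients) is contained in $E_{\mathrm{max}}$ by maximality, so $\psi$ factors through $E_{\mathrm{max}}$ and the composition $T''[-1] \twoheadrightarrow Q'[-1] \xrightarrow{\phi} E''$ vanishes, forcing $\phi = 0$---again a contradiction. Hence $E'' \in \Coh^{P}_{\mu}(X)$.
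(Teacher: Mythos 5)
Your proof is correct, but it takes a genuinely different route from the paper's. The paper first records the characterization $\Coh^{P}_{\mu}(X)=\{E\in\langle\Coh_{\mu}(X),\cC_{\infty}\rangle : \Hom(\cC_{\infty},E)=0\}$, so that conditions (1) and (2) of PT-semistability for the quotient come for free from Lemma~\ref{closed} (the left-hand side of (\ref{DTPT:id}) is the torsion part of a torsion pair, hence closed under quotients in $\bB_{\mu}$); it then produces the filtration by repeatedly quotienting $E$ by the image of a nonzero map from $\cC_{\infty}$, terminating the process by noetherianity of $\bB_{\mu}$ (inherited from $\aA_{\mu}$). You instead build the maximal $\cC_{\infty}$-subobject $E_{\mathrm{max}}$ in a single step, replacing the noetherian input by a boundedness-of-length argument (in effect Lemma~\ref{lem:fin} plus Langer's bound), and then verify PT-semistability of $E/E_{\mathrm{max}}$ by hand; your obstruction-class argument for condition (3) is essentially a re-proof that an extension of two $\cC_{\infty}$-subobjects is again one, which the paper's induction absorbs implicitly. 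Two small refinements: for condition (2) it is cleaner to cite closure of $\langle\Coh_{\mu}(X),\cC_{\infty}\rangle$ under quotients rather than argue that an arbitrary extension of a zero-dimensional sheaf by an object of $\Coh_{\mu}(X)$ is $\mu_{\omega}$-semistable --- such an extension could a priori acquire a zero-dimensional subsheaf and fail slope-semistability under the paper's convention, and it is the membership $E''\in\aA_{\mu}$ that rules this out; and in your final dichotomy the clause ``strictly containing $E_{\mathrm{max}}$'' should be ``contained in $E_{\mathrm{max}}$ by maximality, contradicting $\length(T'')>\length(T')$'' --- though your second branch already covers every case, so nothing is actually lost.
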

\begin{proof}
By the definition of PT semistable objects, 
we have
\begin{align*}
\Coh^{P}_{\mu}(X)=\{ E \in 
\langle \Coh_{\mu}(X), \cC_{\infty} \rangle
 : \Hom(\cC_{\infty}, E)=0\}.  
\end{align*}
It is enough to show that any object 
$E$ in the LHS of (\ref{DTPT:id})
fits into an exact sequence 
\begin{align}\label{TEF}
0 \to T \to E \to F \to 0
\end{align}
in  
$\bB_{\mu}$
with $T \in \cC_{\infty}$
and $F \in \Coh^P_{\mu}(X)$. 
Suppose that $E \notin \Coh^P_{\mu}(X)$. 
Then there is a non-zero morphism 
$T \to E$ for some
$T \in \cC_{\infty}$. 
By Lemma~\ref{closed}, 
$\cC_{\infty}$ is closed under 
quotients in $\bB_{\mu}$, 
so we may assume that 
$T \to E$ is injective in 
$\bB_{\mu}$. Then 
the quotient 
$F=E/T$ in $\bB_{\mu}$
is an object in the LHS of (\ref{DTPT:id})
by Lemma~\ref{closed} again.  
Because $\aA_{\mu}$ is noetherian 
(cf.~the proof of~\cite[Lemma~3.2.4]{BMT}), 
the abelian category
$\bB_{\mu}$ is also noetherian, 
so this process must terminate.  
Hence we obtain the exact sequence (\ref{TEF}). 
\end{proof}
Let us consider the series (\ref{DT:series}), (\ref{PT:series}). 
We are now ready to prove 
Theorem~\ref{thm:higherDTPT}: 
\begin{thm}\label{thm:DTPTseries}
We have the following identity: 
\begin{align*}
\DT_{r, D}(q, t)=M((-1)^r q)^{r \cdot e(X)} \cdot \PT_{r, D}(q, t). 
\end{align*}
\end{thm}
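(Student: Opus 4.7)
The plan is to derive the formula from a single identity in the completed Hall algebra $\widehat{H}_{r,D}(\aA_{\mu})$ obtained from the two torsion-pair presentations of $\langle \Coh_{\mu}(X), \cC_{\infty} \rangle$ supplied by Lemma~\ref{lem:cid}, and then to transport this identity across the integration map to the Poisson algebra of generating series.

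First I would exploit the equality $\langle\Coh_{\mu}(X),\cC_{\infty}\rangle = \langle\cC_{\infty},\Coh^{P}_{\mu}(X)\rangle$ to write the characteristic element of this subcategory as a Hall product in two ways:
\[
\delta(\Coh_{\mu}(X)) \ast \delta(\cC_{\infty}) \;=\; \delta(\cC_{\infty}) \ast \delta(\Coh^{P}_{\mu}(X))
\]
in $\widehat{H}(\aA_{\mu})$. Restricting to the $\Gamma_{r,D}$-graded piece---using that objects in $\cC_{\infty}$ carry Chern character $(0,0,0,-n_1)$, and that the coprime condition (\ref{coprime}) collapses $\mu$-semistability to $\mu$-stability and PT-semistability to PT-stability on this piece---this sharpens to
\[
\overline{\delta}_{\DT}(r,D) \ast \delta(\cC_{\infty}) \;=\; \delta(\cC_{\infty}) \ast \overline{\delta}_{\PT}(r,D).
\]

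Next, writing $\delta(\cC_{\infty}) = \exp_{\ast}(\epsilon(\cC_{\infty}))$, I would recast this as a conjugation, expand via the adjoint action, and descend to the semi-classical quotient. Theorem~\ref{thm:pole} ensures that $\overline{\epsilon}(\cC_{\infty}) = (\mathbb{L}-1)\epsilon(\cC_{\infty})$ is regular, so each iterated commutator $[\epsilon,\ldots,[\epsilon,\overline{\delta}_{\PT}]\ldots]_{\ast}$ lies in $\widehat{H}^{\rm{reg}}_{r,D}(\aA_{\mu})$ and modulo $(\mathbb{L}-1)$ becomes an iterated Poisson bracket. Applying the integration map, which is a Poisson bi-module homomorphism by Subsection~\ref{subsec:poisson}, produces
\[
I_{r,D}\bigl(\overline{\delta}_{\DT}(r,D)\bigr) \;=\; \exp\bigl(\{I_{\sharp}(\overline{\epsilon}(\cC_{\infty})),\,-\}\bigr)\,I_{r,D}\bigl(\overline{\delta}_{\PT}(r,D)\bigr).
\]
By (\ref{RR}), $\chi((0,0,0,-n_1),(r,D,-\beta,-n_2)) = rn_1$, so $\{c_{v_1},c_{v_2}\} = (-1)^{rn_1}\,rn_1\,c_{v_1+v_2}$; under the identification $c_{(r,D,-\beta,-n)} \leftrightarrow q^n t^{\beta}$ the operator $\{I_{\sharp}(\overline{\epsilon}(\cC_{\infty})),-\}$ becomes multiplication by the scalar series $g(q) = -r\sum_{n\geq 1} N_{n,0}\, n\, ((-1)^r q)^n$, whose exponential acts as multiplication by $\exp(g(q))$. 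Combined with (\ref{rel:DT}) this would give $\DT_{r,D}(q,t) = \exp(g(q))\cdot\PT_{r,D}(q,t)$.

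Finally, to identify $\exp(g(q))$ with the MacMahon factor, I would exploit that $N_{n,0}$ depends only on $X$ and not on $(r,D,\beta)$: specializing the entire derivation to $(r,D,\beta)=(1,0,0)$, the known formula $\sum_n I_{n,0}\,q^n = M(-q)^{e(X)}$ for rank-one DT of the trivial curve class (MNOP/Behrend--Fantechi) together with $\PT_{1,0}(q,0)=1$ forces the formal-series identity $-\sum_{n\geq 1} N_{n,0}\, n\, y^n = e(X)\log M(y)$. Substituting $y=(-1)^r q$ then yields $g(q) = r\cdot e(X)\log M((-1)^r q)$, hence $\exp(g(q)) = M((-1)^r q)^{r\cdot e(X)}$, which is the claimed identity. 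The main obstacle is the transition in the second paragraph: the honest passage from the Hall-algebra conjugation to its Poisson-exponential image requires Theorem~\ref{thm:pole} (and ultimately Theorem~\ref{thm:critical}) in an essential way, to ensure the BCH-style expansion is well-defined term-by-term in the regular subalgebra and descends compatibly through $I_{r,D}$; once that is in place, the remainder is bookkeeping with (\ref{RR}) and generating series.
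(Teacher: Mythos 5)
Your proposal is correct and follows essentially the same route as the paper: the Hall-algebra identity from Lemma~\ref{lem:cid}, conjugation by $\exp(\epsilon(\cC_{\infty}))$, descent to the semi-classical quotient via Theorem~\ref{thm:pole}, application of the integration map with the Riemann--Roch computation (\ref{RR}), and finally pinning down the exponential factor by specializing to $(r,D)=(1,0)$, $t=0$ and comparing with the known degree-zero rank-one DT series. The only cosmetic difference is that you restrict the full identity $\delta(\Coh_{\mu}(X))\ast\delta(\cC_{\infty})=\delta(\cC_{\infty})\ast\delta(\Coh^{P}_{\mu}(X))$ to the $(r,D)$-graded piece, whereas the paper writes the graded identity directly.
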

\begin{proof}
By Lemma~\ref{lem:cid},
we have the following identity 
in $\widehat{H}_{r, D}(\aA_{\mu})$: 
\begin{align*}
\delta_{\DT}(r, D)\ast \delta(\cC_{\infty})
=\delta(\cC_{\infty}) \ast \delta_{\PT}(r, D). 
\end{align*} 
Using (\ref{epsilon}),
we obtain the identity:  
\begin{align}\label{del:DTPT}
\delta_{\DT}(r, D) =
\exp(\epsilon(\cC_{\infty}))
\ast \delta_{\PT}(r, D) \ast \exp(-\epsilon(\cC_{\infty})). 
\end{align}
For $a \in \widehat{H}_{\sharp}(\aA_{\mu})$, its 
adjoint action on 
$\widehat{H}_{r, D}(\aA_{\mu})$ is denoted by
\begin{align*}
\mathrm{Ad}(a) \circ x  \cneq
a \ast x -x \ast a \colon \widehat{H}_{r, D}(\aA_{\mu})
\to \widehat{H}_{r, D}(\aA_{\mu}). 
\end{align*}
Then applying the 
Baker-Campbell-Hausdorff formula to (\ref{del:DTPT}), 
we obtain 
\begin{align*}
\delta_{\DT}(r, D) =
\exp\left(\mathrm{Ad}(\epsilon(\cC_{\infty}))
 \right) \circ 
\delta_{\PT}(r, D).
\end{align*}
We multiply $(\mathbb{L}-1)$ to both sides, 
and project them to $\widehat{H}_{r, D}^{\rm{sc}}(\aA_{\mu})$. 
By writing the adjoint action 
of $a \in \widehat{H}_{\sharp}^{\rm{sc}}(\aA_{\mu})$ as
\begin{align*}
\mathrm{Ad}^{\rm{sc}}(a) \circ x \cneq \{a, x\} \colon 
\widehat{H}_{r, D}^{\rm{sc}}(\aA_{\mu})
\to \widehat{H}_{r, D}^{\rm{sc}}(\aA_{\mu})
\end{align*}
we obtain the following identity in 
$\widehat{H}_{r, D}^{\rm{sc}}(\aA_{\mu})$: 
\begin{align}\label{de:id}
\overline{\delta}_{\DT}(r, D) 
=\exp\left(\mathrm{Ad}^{\rm{sc}}(\overline{\epsilon}(\cC_{\infty}))
 \right) \circ 
\overline{\delta}_{\PT}(r, D).
\end{align}
By applying the integration map $I_{r, D}$ in (\ref{integ}), 
using (\ref{def:N}) and the computation (\ref{RR}),
we obtain the formula
\begin{align*}
\DT_{r, D}(q, t)=
\exp\left( \sum_{n>0} (-1)^{rn-1}rn N_{n, 0}q^n \right)
\cdot \PT_{r, D}(q, t). 
\end{align*}
By substituting $(r, D)=(1, 0)$, $t=0$, and 
comparing the computation 
of the series 
$\DT_{(1, 0)}(q, 0)$ in~\cite{BBr}, \cite{LP}, \cite{Li}, 
we obtain 
\begin{align*}
\exp\left( \sum_{n>0} (-1)^{n-1}n N_{n, 0}q^n \right)
=M(-q)^{e(X)}. 
\end{align*}
Therefore we obtain the desired identity. 
\end{proof}

\section{Rationality of higher rank PT invariants}
In this section, we prove Theorem~\ref{intro:rational}. 
\subsection{Category $\Coh_{\mu}^{L}(X)$}
As before, let $\mu$ be the rational number (\ref{def:mu}), 
and $\dD_{\mu}$ be the category~(\ref{Cu}). 
We define the following category
\begin{align*}
\Coh^{L}_{\mu}(X) \cneq 
\{ E \in \dD_{\mu} : \Hom(F, E)=0 \mbox{ for any }
F \in \cC_{[0, \infty]}\}. 
\end{align*}
We prepare some lemmas on the above category. 
\begin{lem}\label{L:id}
We have the following identity: 
\begin{align*}
\dD_{\mu}=\langle \cC_{[0, \infty]}, \Coh^L_{\mu}(X) \rangle. 
\end{align*}
\end{lem}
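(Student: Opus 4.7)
The plan is to run the same torsion-pair argument as in Lemma~\ref{lem:cid}: to exhibit $(\cC_{[0,\infty]}, \Coh^L_\mu(X))$ as a decomposition of $\dD_\mu$ in the sense of Definition~\ref{def:nest}. Condition (i) there, namely $\Hom(\cC_{[0,\infty]}, \Coh^L_\mu(X)) = 0$, is immediate from the very definition of $\Coh^L_\mu(X)$, so all the work is in producing, for each $E \in \dD_\mu$, a short exact sequence $0 \to T \to E \to F \to 0$ in $\bB_\mu$ with $T \in \cC_{[0,\infty]}$ and $F \in \Coh^L_\mu(X)$.

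First I would assemble the closure properties in $\bB_\mu$ that the argument needs. By Lemma~\ref{closed}, both $\cC_{[0,\infty]}$ and $\dD_\mu$ are closed under quotients, and $\Coh_{\le 1}(X)[-1]$ (the rank-zero part of $\bB_\mu$) is closed under quotients and subobjects. Closure of $\cC_{[0,\infty]}$ under extensions in $\bB_\mu$ reduces, via additivity of rank, to extension-closedness inside $\Coh_{\le 1}(X)[-1]$, which is in turn a direct consequence of the existence of Harder-Narasimhan filtrations for $\overline{\mu}_\omega$-stability: an extension of two objects whose HN factors all have slope in $[0,\infty]$ has the same property. Noetherianness of $\aA_\mu$, already invoked in the proof of Lemma~\ref{lem:cid}, restricts to noetherianness of the full subcategory $\bB_\mu$.

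With these in hand I would take $E \in \dD_\mu$ and consider the family $\mathcal{T}_E$ of subobjects $T \hookrightarrow E$ in $\bB_\mu$ with $T \in \cC_{[0,\infty]}$. It is non-empty, and by noetherianness admits a maximal element $T_{\max}$. Set $F = E/T_{\max}$; closure of $\dD_\mu$ under quotients gives $F \in \dD_\mu$. To check $F \in \Coh^L_\mu(X)$ I would assume given a non-zero $\phi \colon S \to F$ with $S \in \cC_{[0,\infty]}$. Its image $S' = \imm(\phi) \subset F$ in $\bB_\mu$ lies in $\cC_{[0,\infty]}$ by closure under quotients, and its preimage $T' \subset E$ along the epimorphism $E \twoheadrightarrow F$ fits into a short exact sequence
\begin{align*}
0 \to T_{\max} \to T' \to S' \to 0
\end{align*}
in $\bB_\mu$. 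Closure of $\cC_{[0,\infty]}$ under extensions then places $T' \in \cC_{[0,\infty]}$, contradicting maximality of $T_{\max}$. Hence $F \in \Coh^L_\mu(X)$.

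The argument is entirely parallel to that of Lemma~\ref{lem:cid}, and the only mildly delicate point is the closure of $\cC_{[0,\infty]}$ under extensions in $\bB_\mu$; this is not obviously inherited from a torsion pair of $\bB_\mu$, but follows from Harder-Narasimhan for $\overline{\mu}_\omega$-stability on $\Coh_{\le 1}(X)$ combined with the rank description of $\Coh_{\le 1}(X)[-1] \subset \bB_\mu$. Beyond that, no input beyond Lemma~\ref{closed} and the noetherian property of $\bB_\mu$ enters the proof.
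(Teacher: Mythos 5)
Your proof is correct and follows essentially the same route as the paper, which simply reruns the argument of Lemma~\ref{lem:cid} with $\dD_{\mu}$, $\cC_{[0,\infty]}$, $\Coh_{\mu}^{L}(X)$ in place of $\langle \Coh_{\mu}(X), \cC_{\infty}\rangle$, $\cC_{\infty}$, $\Coh_{\mu}^{P}(X)$: you phrase it via a maximal subobject in $\cC_{[0,\infty]}$ where the paper iteratively quotients and invokes noetherianity of $\bB_{\mu}$ for termination, but these are the same argument. Your explicit remark that extension-closedness of $\cC_{[0,\infty]}$ in $\bB_{\mu}$ reduces via additivity of rank to extension-closedness inside $\Coh_{\le 1}(X)[-1]$ is a point the paper leaves implicit, and is correctly justified.
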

\begin{proof}
Using Lemma~\ref{closed} and replacing
$\langle \Coh_{\mu}(X), \cC_{\infty} \rangle$,  
$\cC_{\infty}$, $\Coh_{\mu}^{P}(X)$
by $\dD_{\mu}$, $\cC_{[0, \infty]}$, $\Coh_{\mu}^L(X)$
in the proof of Lemma~\ref{lem:cid}, 
we obtain the desired identity. 
\end{proof}
\begin{lem}\label{lem:aut}
For any $E \in \Coh^L_{\mu}(X)$ with
$(\ch_0(E), \ch_1(E))=(r, D)$, we have 
$\Aut(E)=\mathbb{C}^{\ast}$. 
\end{lem}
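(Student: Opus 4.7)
The plan is to exploit a torsion pair decomposition of $\dD_{\mu}$ together with the coprime hypothesis (\ref{coprime}) in order to reduce the automorphism computation to the $\mu_{\omega}$-stability of a suitable subobject.

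First, I would apply Remark \ref{rmk:filt} to the nested structure $\dD_{\mu}=\langle \Coh_{\mu}(X), \cC_{\infty}, \cC_{[0,\infty)}\rangle$ with $i=1$, yielding the torsion pair $\dD_{\mu}=\langle \Coh_{\mu}(X), \cC_{[0,\infty]}\rangle$, since $\cC_{[0,\infty]}=\langle \cC_{\infty}, \cC_{[0,\infty)}\rangle$ inside $\bB_{\mu}$. For the given $E \in \Coh^L_{\mu}(X)\subset \dD_{\mu}$ this produces a unique exact sequence
\begin{equation*}
0 \to E_1 \to E \to T \to 0
\end{equation*}
in $\bB_{\mu}$ with $E_1 \in \Coh_{\mu}(X)$ and $T \in \cC_{[0,\infty]}$. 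Since $\cC_{[0,\infty]}\subset\Coh_{\le 1}(X)[-1]$, the object $T$ is of rank $0$ with vanishing first Chern class, so $E_1$ inherits $(\ch_0(E_1),\ch_1(E_1))=(r,D)$. The coprimality hypothesis (\ref{coprime}) then forces the $\mu_{\omega}$-semistable sheaf $E_1$ to be $\mu_{\omega}$-stable, hence $\End(E_1)=\mathbb{C}$ and $\Aut(E_1)=\mathbb{C}^{\ast}$.

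Next, I would show that any $\phi \in \Aut(E)$ preserves $E_1$: the composite $E_1 \hookrightarrow E \xrightarrow{\phi} E \twoheadrightarrow T$ lies in $\Hom(\Coh_{\mu}(X),\cC_{[0,\infty]})=0$ by the torsion pair property, so $\phi(E_1)\subseteq E_1$, and the same reasoning applied to $\phi^{-1}$ gives $\phi(E_1)=E_1$. Hence $\phi|_{E_1}=c\cdot\id_{E_1}$ for some $c\in\mathbb{C}^{\ast}$. Setting $\psi=\phi-c\cdot\id_{E}\in\End(E)$, we have $\psi|_{E_1}=0$, so $\psi$ factors as $E\xrightarrow{\pi}T\xrightarrow{\tilde{\psi}}E$ for a unique $\tilde{\psi}\in\Hom(T,E)$. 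But the very definition of $\Coh^L_{\mu}(X)$ yields $\Hom(\cC_{[0,\infty]},E)=0$, so $\tilde{\psi}=0$; therefore $\phi=c\cdot\id_E$, which gives $\Aut(E)=\mathbb{C}^{\ast}$.

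The argument is largely mechanical once the correct torsion pair is identified; the main point to be careful about is ensuring that $\langle \cC_{\infty},\cC_{[0,\infty)}\rangle$ really coincides with $\cC_{[0,\infty]}$ inside $\bB_{\mu}$, so that the defining vanishing $\Hom(\cC_{[0,\infty]},E)=0$ can be applied to the torsion-free quotient $T$. This follows because both $\cC_{\infty}$ and $\cC_{[0,\infty)}$ arise from $\overline{\mu}_{\omega}$-slope considerations inside $\Coh_{\le 1}(X)[-1]$, and $\cC_{[0,\infty]}$ is by definition the extension closure of all $\overline{\mu}_{\omega}$-semistable objects with slope in $[0,\infty]$.
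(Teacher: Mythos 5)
Your proof is correct, but it follows a genuinely different route from the paper's. The paper takes an arbitrary nonzero endomorphism $\phi$ of $E$, sets $F=\imm \phi$ and $G=\ker\phi$ in $\bB_{\mu}$, and shows directly that $F=E$ and $G=0$: if $F\subsetneq E$, the coprime condition (\ref{coprime}) forces $\rank(F)=0$ or $\rank(G)=0$, and each case is excluded using Lemma~\ref{closed}, the vanishing $\Hom(\cC_{[0,\infty]},E)=0$, and (in the second case) the equality $\ch(G)=\ch(E/F)$ comparing a nonzero object of $\cC_{<0}$ with one of $\cC_{[0,\infty]}$; hence $\End(E)$ is a finite-dimensional division algebra over $\mathbb{C}$ and therefore equals $\mathbb{C}$. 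You instead use the canonical torsion-pair sequence $0\to E_1\to E\to T\to 0$ coming from $\dD_{\mu}=\langle\Coh_{\mu}(X),\cC_{[0,\infty]}\rangle$, deduce from coprimality that $E_1$ is $\mu_{\omega}$-stable with $\End(E_1)=\mathbb{C}$, and propagate scalarity to $E$ via $\Hom(\Coh_{\mu}(X),\cC_{[0,\infty]})=0$ and the defining vanishing $\Hom(T,E)=0$. Both arguments rest on the same two inputs (the coprime condition and the definition of $\Coh^{L}_{\mu}(X)$); yours additionally imports the standard simplicity of slope-stable sheaves, and in return is more structural --- it exhibits every endomorphism as a scalar plus a map factoring through $T$, so it too yields $\End(E)=\mathbb{C}$ --- while the paper's is more self-contained and uniform with its other d\'evissage arguments (e.g.\ Lemma~\ref{lem:cid}). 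Two minor remarks: the identification $\langle\cC_{\infty},\cC_{[0,\infty)}\rangle=\cC_{[0,\infty]}$ that you flag is indeed the point to check and follows from Harder--Narasimhan theory exactly as you say; and $T$ lies in $\Coh_{\le 1}(X)[-1]$, so calling it the ``torsion-free quotient'' is a terminological slip that does not affect the argument.
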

\begin{proof}
It is enough to show that any non-zero 
morphism $\phi \colon E \to E$ is 
an isomorphism. 
Let $0\neq F \subset E$ be the image of 
$\phi$ in $\bB_{\mu}$, 
and $G \subset F$ the kernel of $\phi$
in $\bB_{\mu}$. 
Suppose that $F \subsetneq E$.
Because of the coprime condition (\ref{coprime}), 
we have either $\rank(F)=0$ or $\rank(G)=0$. 
In the $\rank(F)=0$ case, 
we have $F \in \Coh_{\le 1}(X)[-1]$. 
On the other hand, 
we have $F \in \dD_{\mu}$ by Lemma~\ref{closed}, 
 hence $F \in \cC_{[0, \infty]}$. 
But this contradicts to  
$E\in \Coh^L_{\mu}(X)$. 
In the $\rank(G)=0$ case, 
the condition $E \in \Coh^L_{\mu}(X)$ implies that
$G \in \cC_{<0}$. 
Also by Lemma~\ref{closed},
we have $E/F \in \cC_{[0, \infty]}$. 
Since $\ch(G)=\ch(E/F)$, 
we have the contradiction. 
Therefore we must have $F=E$. 
But then $G \in \bB_{\mu}$
satisfies $\ch(G)=0$, hence 
$G=0$. 
Therefore $\phi$ is an isomorphism.  
\end{proof}
By Lemma~\ref{L:id}, the abelian 
category $\bB_{\mu}$ is written as
\begin{align}\label{decom:L}
\bB_{\mu}=\langle \cC_{[0, \infty]}, \Coh_{\mu}^{L}(X), \cC_{<0} \rangle. 
\end{align}
Let $\mathbb{D}$ be the dualizing functor
on $D^b \Coh(X)$
defined by 
\begin{align*}
\mathbb{D}(E) \cneq \dR \hH om(E, \oO_X). 
\end{align*}
The next proposition shows that the category 
$\Coh_{\mu}^{L}(X)$ behaves well under the duality:  
\begin{prop}\label{prop:decom}
The abelian category $\bB_{-\mu}$ is written as
\begin{align}\label{decom:Bu}
\bB_{-\mu}=
\langle \cC_{(0, \infty]}, \mathbb{D}(\Coh_{\mu}^{L}(X)), 
\cC_{\le 0} \rangle. 
\end{align}
\end{prop}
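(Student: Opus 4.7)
The plan is to verify the three axioms of Definition~\ref{def:nest} for the proposed decomposition of $\bB_{-\mu}$: (a) each summand lies in $\bB_{-\mu}$, (b) Hom-vanishing holds between consecutive summands, and (c) every $E\in\bB_{-\mu}$ admits the corresponding three-step filtration. The principal tool is the contravariant involution $\mathbb{D}=\dR\HOM(-,\oO_X)$ on the CY3 $X$, which satisfies $\mathbb{D}^2\simeq\id$ and the adjunction $\Hom(G,\mathbb{D}(E))\simeq\Hom(E,\mathbb{D}(G))$.

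First I would verify $\mathbb{D}(\Coh_\mu^L(X))\subset\bB_{-\mu}$. For $E\in\Coh_\mu^L(X)$, the defining vanishing $\Hom(\cC_{[0,\infty]},E)=0$ in $\bB_\mu$ translates by Serre duality to $\Ext^2(E,T)=0$ for $T\in\Coh_{\le 1}(X)$ with $\overline{\mu}_\omega(T)\ge 0$. Using the sub-filtration analysis of $\cC_{[0,\infty]}$-subobjects, this forces $\hH^1(E)=0$ and places $\hH^0(E)\in\Coh_\mu(X)$ with the codimension-one defect $T_0\cneq\hH^0(E)^{\vee\vee}/\hH^0(E)$ being a pure one-dimensional sheaf whose $\overline{\mu}_\omega$-Harder-Narasimhan factors all have slope $<0$. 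Applying $\mathbb{D}$ and tracking the $\mathcal{E}\!\mathit{xt}^i(E,\oO_X)$-sheaves via the short exact sequence $0\to\hH^0(E)\to\hH^0(E)^{\vee\vee}\to T_0\to 0$, one obtains $\hH^0(\mathbb{D}(E))=\hH^0(E)^\vee$ (reflexive, slope $-\mu$), $\hH^1(\mathbb{D}(E))$ an extension of $T_0^D$ (pure one-dimensional with $\overline{\mu}_\omega(T_0^D)>0$) by a zero-dimensional piece, and $\hH^i(\mathbb{D}(E))=0$ for $i\notin\{0,1\}$. The Chern character computation $(\ch_0,\ch_1)(\mathbb{D}(E))=(r,-D)$ then confirms $\mathbb{D}(E)\in\bB_{-\mu}$.

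For (b), the vanishing $\Hom(\cC_{(0,\infty]},\cC_{\le 0})=0$ is standard in $\Coh_{\le 1}(X)[-1]$ via $\overline{\mu}_\omega$-stability. For the other two, I would use the adjunction $\Hom(G,\mathbb{D}(E))\simeq\Hom(E,\mathbb{D}(G))$ together with the explicit computation of $\mathbb{D}$ on the pieces: $\mathbb{D}(F[-1])=F^D[-1]$ for pure one-dimensional $F$, with the slope flip $\overline{\mu}_\omega(F^D)=-\overline{\mu}_\omega(F)$, and $\mathbb{D}(Q[-1])=Q'[-2]$ for zero-dimensional $Q$ (paired by $\Ext^2$). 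These reduce the required Hom-vanishings to consequences of the defining vanishing $\Hom(\cC_{[0,\infty]},E)=0$ of $\Coh_\mu^L(X)$ and the Serre-dual Ext-vanishings constraining $\hH^0(E)^{\vee\vee}/\hH^0(E)$. For (c), I would start from the decomposition $\bB_{-\mu}=\langle\cC_{[0,\infty]},\Coh_{-\mu}^L(X),\cC_{<0}\rangle$ given by Lemma~\ref{L:id} applied to $-\mu$, refine $\cC_{[0,\infty]}=\langle\cC_{(0,\infty]},\cC_0\rangle$ by $\overline{\mu}_\omega$-HN, and then reorganize the resulting four-term filtration so that $\cC_0$ migrates into the last slot as part of $\cC_{\le 0}=\langle\cC_0,\cC_{<0}\rangle$; the cohomological description of $\mathbb{D}(\Coh_\mu^L(X))$ obtained in step one (namely $\hH^0$ reflexive of slope $-\mu$, $\hH^1$ in the $\cC_{(0,\infty]}$-underlying locus) lets one absorb the reshuffled data into the new middle piece, identifying it with $\mathbb{D}(\Coh_\mu^L(X))$ via the contravariant involution.

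The main obstacle will be step (c): precisely pinning down the middle category after the reshuffle as exactly $\mathbb{D}(\Coh_\mu^L(X))$, which requires dualizing the defining subobject-free characterization of $\Coh_\mu^L(X)$ via $\mathbb{D}$ to a dual quotient-free characterization on the $\bB_{-\mu}$-side, and controlling the interaction of the derived duality with the extensions between the $\cC_0$-piece and objects with reflexive degree-zero cohomology.
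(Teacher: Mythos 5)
There is a genuine gap, concentrated in your step (c), and a secondary error in step (a). For (a): your claim that the defining vanishing $\Hom(\cC_{[0,\infty]},E)=0$ forces $\hH^1(E)=0$ is not correct. That condition only excludes \emph{subobjects} of $E$ lying in $\cC_{[0,\infty]}$; it says nothing about quotients, and an object of $\dD_{\mu}=\langle \Coh_{\mu}(X),\cC_{\infty},\cC_{[0,\infty)}\rangle$ acquires nonzero $\hH^1$ precisely from the quotient pieces. The paper's own ``exotic'' example, a nonsplit extension $\uU\to I^{\bullet}\to\oO_x[-1]$ with $\uU$ reflexive and not locally free at $x$, has $\hH^1(I^{\bullet})=\oO_x\neq 0$ and can perfectly well have no $\cC_{[0,\infty]}$-subobjects. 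Since the rest of your plan (the computation of the cohomology sheaves of $\mathbb{D}(E)$, and the identification of the middle piece in (c)) leans on this description, the error propagates. For (c), which you correctly flag as the main obstacle: starting from $\bB_{-\mu}=\langle\cC_{[0,\infty]},\Coh_{-\mu}^{L}(X),\cC_{<0}\rangle$ and ``migrating'' $\cC_{0}$ from the first slot to the last is not a harmless reshuffle. The middle category of a three-step decomposition is exactly the objects with no subobject in the first piece and no quotient in the third, so moving $\cC_{0}$ across replaces $\Coh_{-\mu}^{L}(X)$ (no $\cC_{0}$-subobjects) by a genuinely different category (no $\cC_{0}$-quotients), and showing that this new middle piece equals $\mathbb{D}(\Coh_{\mu}^{L}(X))$ is precisely the content of the proposition. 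Your plan restates this as something to be ``absorbed'' and ``identified via the contravariant involution'' but contains no argument for it.

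The paper avoids both issues by a transport-of-structure argument: it first shows that $\mathbb{D}$ restricts to an exact anti-equivalence $\bB_{\mu}\simeq\bB_{-\mu}^{\dag}$, where $\bB_{-\mu}^{\dag}=\langle\Coh_{-\mu}^{P}(X),\cC_{<\infty},\cC_{\infty}[-1]\rangle$ is the tilt of $\bB_{-\mu}$ at $\cC_{\infty}$; the only nontrivial input is the identification of $\mathbb{D}(\Coh_{\mu}^{P}(X))$ inside $\bB_{-\mu}$ quoted from the literature, together with $\mathbb{D}(\cC_{<\infty})=\cC_{<\infty}$ and $\mathbb{D}(\cC_{\infty})=\cC_{\infty}[-1]$. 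Once this equivalence is in hand, the known decomposition $\bB_{\mu}=\langle\cC_{[0,\infty]},\Coh_{\mu}^{L}(X),\cC_{<0}\rangle$ transports verbatim, with the order of the factors reversed by contravariance, to a four-term decomposition of $\bB_{-\mu}^{\dag}$, and untilting $\cC_{\infty}[-1]$ back merges it into $\cC_{(0,\infty]}$ to give (\ref{decom:Bu}). This makes your steps (a) and (c) automatic rather than things to be checked by hand. If you want to salvage a direct verification, you would need to replace your cohomological description of $\Coh_{\mu}^{L}(X)$ by the purely categorical one and, for (c), actually run the tilting comparison between the two middle pieces --- at which point you will have reconstructed the paper's argument.
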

\begin{proof}
Note that $\bB_{\mu}$ is also written as
\begin{align*}
\bB_{\mu}=\langle \cC_{\infty}, \Coh_{\mu}^{P}(X), 
\cC_{<\infty} \rangle
\end{align*}
by Lemma~\ref{lem:cid}. 
Let $\bB_{\mu}^{\dag}$ be the tilting 
at $\cC_{\infty}$, i.e.
\begin{align*}
\bB_{\mu}^{\dag}
\cneq \langle \Coh_{\mu}^{P}(X), 
\cC_{<\infty}, \cC_{\infty}[-1] \rangle. 
\end{align*} 
We first claim that $\mathbb{D}$ induces the equivalence
\begin{align}\label{B:id}
\mathbb{D} \colon \bB_{\mu} \stackrel{\sim}{\to}
\bB_{-\mu}^{\dag}. 
\end{align}
Indeed 
we have $\mathbb{D}(\cC_{<\infty})=\cC_{<\infty}$ 
and $\mathbb{D}(\cC_{\infty})=\cC_{\infty}[-1]$. 
Also by~\cite[Lemma~4.16]{PiYT}, the following holds:
\begin{align*}
\mathbb{D}(\Coh_{\mu}^{P}(X))
=\{ E \in \bB_{-\mu} \colon 
\Hom(\Coh_{\le 1}(X)[-1], E)=0\}. 
\end{align*}
In particular, we have 
$\mathbb{D}(\Coh_{\mu}^P(X)) \subset
\bB_{-\mu}^{\dag}$, 
therefore $\mathbb{D}(\bB_{\mu}) \subset 
\bB_{-\mu}^{\dag}$
holds. 
Similar argument shows that 
$\mathbb{D}(\bB_{-\mu}^{\dag}) \subset \bB_{\mu}$, 
hence the equivalence (\ref{B:id})
holds.  
Applying (\ref{B:id}) to the description (\ref{decom:L})
and noting that $\mathbb{D}(\cC_{[0, \infty)})=\cC_{\le 0}$, 
$\mathbb{D}(\cC_{<0})=\cC_{(0, \infty)}$, we obtain the identity
\begin{align*}
\bB_{-\mu}^{\dag}
=\langle \cC_{(0, \infty)}, \mathbb{D}(\Coh_{\mu}^L(X)), \cC_{\le 0}, \cC_{\infty}[-1] \rangle. 
\end{align*}
By tilting $\cC_{\infty}[-1]$ back, 
we obtain the desired identity (\ref{decom:Bu}). 
\end{proof}

The following corollary obviously follows from 
Proposition~\ref{prop:decom}: 
\begin{cor}\label{cor:inc}
We have $\mathbb{D}(\Coh_{\mu}^{L}(X)) \subset \dD_{-\mu}$. 
\end{cor}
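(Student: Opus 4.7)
The plan is to reduce the inclusion to a single vanishing $\Hom(\mathbb{D}(E),\cC_{<0})=0$, which will then follow immediately from Proposition~\ref{prop:decom}.

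First I would recast $\dD_{-\mu}$ as the torsion part of a torsion pair on $\bB_{-\mu}$. Applying Remark~\ref{rmk:filt} to the identity (\ref{def:Bu}) written for $-\mu$,
\begin{align*}
\bB_{-\mu}=\langle \Coh_{-\mu}(X),\cC_{\infty},\cC_{[0,\infty)},\cC_{<0}\rangle,
\end{align*}
gives the torsion pair $(\dD_{-\mu},\cC_{<0})$ on $\bB_{-\mu}$. Since the torsion part of any torsion pair $(\tT,\fF)$ equals $\{E:\Hom(E,\fF)=0\}$ (any nonzero map to the torsion-free part would factor through its own torsion-free quotient, contradicting uniqueness of the filtration), we obtain the clean characterization
\begin{align*}
\dD_{-\mu}=\{E\in \bB_{-\mu}:\Hom(E,\cC_{<0})=0\}.
\end{align*}

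Next I would exploit Proposition~\ref{prop:decom} in the same way. The identity (\ref{decom:Bu}) together with Remark~\ref{rmk:filt} yields the torsion pair
\begin{align*}
\bB_{-\mu}=\langle\;\langle \cC_{(0,\infty]},\mathbb{D}(\Coh_{\mu}^{L}(X))\rangle,\;\cC_{\le 0}\;\rangle
\end{align*}
on $\bB_{-\mu}$. For any $E\in \Coh_{\mu}^{L}(X)$, the object $\mathbb{D}(E)$ sits in $\mathbb{D}(\Coh_{\mu}^{L}(X))$, which is contained in the torsion part of this pair, and hence $\Hom(\mathbb{D}(E),\cC_{\le 0})=0$ by the same characterization of torsion parts as above.

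Finally, since $\cC_{<0}\subset \cC_{\le 0}$, the vanishing $\Hom(\mathbb{D}(E),\cC_{\le 0})=0$ specializes to $\Hom(\mathbb{D}(E),\cC_{<0})=0$, and by the first step this places $\mathbb{D}(E)$ in $\dD_{-\mu}$, giving the desired inclusion. There is no real obstacle here; the whole content of the corollary is the bookkeeping observation that the two torsion pairs on $\bB_{-\mu}$ arising from (\ref{def:Bu}) and Proposition~\ref{prop:decom} are compatible in the sense that the torsion-free part of the latter contains that of the former.
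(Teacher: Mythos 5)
Your argument is correct and is precisely the deduction the paper leaves implicit (the paper simply states that the corollary ``obviously follows'' from Proposition~\ref{prop:decom}): identify $\dD_{-\mu}$ as the torsion part of the pair $(\dD_{-\mu},\cC_{<0})$ on $\bB_{-\mu}$, note from the decomposition (\ref{decom:Bu}) that $\Hom(\mathbb{D}(E),\cC_{\le 0})=0$ for $E\in\Coh_{\mu}^{L}(X)$, and conclude via $\cC_{<0}\subset\cC_{\le 0}$. No gaps; this is exactly the intended bookkeeping.
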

\subsection{$L$-invariants}\label{subsec:L}
Let
\begin{align}\label{ML}
\mM_{\rm{L}}(r, D) \subset \oO bj(\aA_{\mu})
\end{align}
be the substack of objects $E \in \Coh_{\mu}^L(X)$
with $(\ch_0(E), \ch_1(E))=(r, d)$. 
We will not pursue to prove that (\ref{ML}) is an
open substack of finite type. 
Instead, the
definition of $\Coh_{\mu}^{L}(X)$
and 
Lemma~\ref{lem:bound} 
easily imply that the 
$\mathbb{C}$-valued points of the 
stack (\ref{ML})
on each component $\oO bj_{v}(\aA_{\mu})$ 
forms a constructible subset. 
Therefore by using the motivic relation, 
we obtain the element
\begin{align*}
\delta_{\rm{L}}(r, D) \cneq [\mM_{\rm{L}}(r, D) \subset \oO bj(\aA_{\mu})] 
\in \widehat{H}_{r, D}(\aA_{\mu}). 
\end{align*}
Also Lemma~\ref{lem:aut}
implies that
\begin{align*}
\overline{\delta}_{\rm{L}}(r, D) \cneq (\mathbb{L}-1) \cdot 
\delta_{\rm{L}}(r, D) 
\in \widehat{H}_{r, D}^{\rm{reg}}(\aA_{\mu}). 
\end{align*} 
By projecting it
to  
$\widehat{H}_{r, D}^{\rm{sc}}(\aA_{\mu})$
and applying (\ref{integ}), 
we obtain the integer valued invariants
\begin{align*}
\mathrm{L}(r, D, -\beta, -n) \in \mathbb{Z}
\end{align*}
by the formula
\begin{align*}
I_{r, D}(\overline{\delta}_{\rm{L}}(r, D))
=-\sum_{v\in \Gamma_{r, D}} \mathrm{L}(v) \cdot c_{v}. 
\end{align*}

\subsection{Properties of the generating series}
We discuss some properties of the
generating series of $N_{n, \beta}$ and 
$\mathrm{L}(r, D, -\beta, -n)$. 
For the former invariants, we have the following:
\begin{lem}\label{lem:Nrational}
For a fixed $\beta>0$, the generating series
\begin{align}\label{N:rational}
\sum_{n \ge 0} N_{n, \beta}q^n, \ 
\sum_{n\ge 0} n N_{n, \beta}q^n
\end{align}
are rational functions in $q$. 
\end{lem}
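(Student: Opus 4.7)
The plan is to establish a periodicity of $N_{n,\beta}$ in $n$ induced by tensoring with line bundles, and then to deduce rationality by a direct calculation.

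For any line bundle $L \in \Pic(X)$, the auto-equivalence $F \mapsto F \otimes L$ of $\Coh_{\le 1}(X)$ shifts the numerical invariants of a one-dimensional sheaf by $(\beta, n) \mapsto (\beta, n + c_1(L) \cdot \beta)$ while preserving $\beta$. Although $-\otimes L$ does not preserve $\overline{\mu}_{\omega}$-semistability pointwise---the shift in slope depends on the class of the subsheaf, not just on $\beta$---the generalized DT invariants $N_{n,\beta}$ are nevertheless invariant under this transformation. Indeed, on each $\beta$-component the auto-equivalence identifies the characteristic function of $\overline{\mu}_{\omega}$-semistables of slope $n/(\omega\beta)$ with that of slope $(n + c_1(L)\cdot\beta)/(\omega\beta)$, and a wall-crossing argument combined with Joyce's no-pole theorem (Theorem~\ref{thm:pole}) and the canonicity of the Behrend function yields
\[
N_{n,\beta} = N_{n + c_1(L)\cdot\beta,\, \beta}.
\]
This periodicity is essentially due to Bridgeland~\cite{BrH} and the author~\cite{Tolim2}.

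Since $\omega$ is ample and $\beta > 0$, we have $\omega\cdot\beta > 0$; hence the homomorphism $\Pic(X) \to \mathbb{Z}$, $L \mapsto c_1(L)\cdot\beta$, has image a nonzero subgroup $k\mathbb{Z}$ for some positive integer $k$ (necessarily dividing $\omega\cdot\beta$). Thus $N_{n,\beta}$ is $k$-periodic in $n$, and
\[
\sum_{n \ge 0} N_{n,\beta}\, q^n = \frac{\sum_{j=0}^{k-1} N_{j,\beta}\, q^j}{1 - q^k}
\]
is a rational function of $q$. For the second generating series, I would use
\[
\sum_{n\ge 0} n N_{n,\beta}\, q^n = q\,\frac{d}{dq} \sum_{n\ge 0} N_{n,\beta}\, q^n,
\]
which exhibits it as the $q$-derivative of a rational function, hence again rational.

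The main obstacle is the rigorous verification of the periodicity, since $-\otimes L$ does not preserve $\overline{\mu}_{\omega}$-semistability in the strict sense. The resolution, already present in the cited works, is to work at the level of the motivic Hall algebra, where the auto-equivalence acts canonically on the elements $\delta(\cC_{\overline{\mu}})$ and commutes with $\log$, $(\mathbb{L}-1)\cdot(-)$, and the integration map $I_{\sharp}$; the change of stability condition is then absorbed by Joyce's wall-crossing formalism.
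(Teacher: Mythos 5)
Your overall strategy --- periodicity of $N_{n,\beta}$ in $n$ via tensoring with a line bundle, followed by a geometric--series summation and $q\frac{d}{dq}$ --- is exactly the paper's, but you have manufactured an obstacle that is not there and then left the resulting (much harder) step unproven. You assert that $-\otimes L$ never preserves $\overline{\mu}_{\omega}$-semistability pointwise because ``the shift in slope depends on the class of the subsheaf.'' That is true for a general $L$, but the whole point of the paper's argument is to choose $L$ with $c_1(L)=\omega$. For a one-dimensional sheaf $F$ one has $\ch_2(F\otimes L)=\ch_2(F)$ and $\ch_3(F\otimes L)=\ch_3(F)+c_1(L)\cdot\ch_2(F)$, so with $c_1(L)=\omega$ the slope of \emph{every} object of $\Coh_{\le 1}(X)$ shifts by exactly $+1$:
\begin{align*}
\overline{\mu}_{\omega}(F\otimes L)
=\frac{\ch_3(F)+\omega\cdot\ch_2(F)}{\ch_2(F)\cdot\omega}
=\overline{\mu}_{\omega}(F)+1.
\end{align*}
Since the shift is uniform, it applies to all subsheaves and quotients as well, so $-\otimes L$ is an auto-equivalence of $\Coh_{\le 1}(X)$ taking $\overline{\mu}_{\omega}$-semistables of slope $\overline{\mu}$ to those of slope $\overline{\mu}+1$, and the periodicity $N_{n,\beta}=N_{n+\omega\cdot\beta,\beta}$ follows immediately, with no wall-crossing needed.

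By contrast, your version of the periodicity, $N_{n,\beta}=N_{n+c_1(L)\cdot\beta,\beta}$ for arbitrary $L$ (giving the possibly smaller period $k$), genuinely does require comparing counts of semistable objects for two different stability conditions on the same class, i.e.\ the full independence-of-polarization statement for the $N$-invariants. You gesture at ``a wall-crossing argument combined with Joyce's no-pole theorem and the canonicity of the Behrend function'' but do not carry it out; as written this is the entire content of the lemma and cannot be left as an assertion. The fix is simply to specialize to $c_1(L)=\omega$, where the difficulty evaporates; the final rational-function computation (geometric series with period $\omega\cdot\beta>0$, then $q\frac{d}{dq}$ for the second series) is fine and matches the paper.
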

\begin{proof}
Since the $\overline{\mu}_{\omega}$-stability 
is preserved under taking the 
tensor product with a line bundle $\lL$
satisfying $c_1(\lL)=\omega$, we have 
$N_{n, \beta}=N_{n+\beta \cdot \omega, \beta}$. 
By noting Remark~\ref{rmk:N} and $\beta \cdot \omega>0$, 
the desired property of (\ref{N:rational}) is an 
easy consequence of this periodicity. 
\end{proof}
\begin{rmk}
In fact $N_{n, \beta}$ further satisfies
$N_{n, \beta}=N_{-n, \beta}$
and this fact also shows that 
the second series in (\ref{N:rational}) 
is a rational function invariant under
$q\leftrightarrow 1/q$, 
as proved in~\cite[Lemma~4.6]{Tolim2}. 
However the first series in (\ref{N:rational}) 
is not invariant under $q \leftrightarrow 1/q$. 
\end{rmk}
As for the L-invariants, we have the following: 
\begin{lem}\label{lem:Lpoly}
For a fixed $(r, D, \beta)$, the 
generating series
\begin{align}\label{series:Ln}
\sum_{6n \in \mathbb{Z}} \mathrm{L}(r, D, -\beta, -n)q^n
\end{align}
is a Laurent polynomial in $q^{\frac{1}{6}}$. 
\end{lem}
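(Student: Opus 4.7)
The plan is to combine the bound already built into the definition of $\dD_\mu$ with a dual bound obtained from Corollary~\ref{cor:inc}, so that for fixed $(r,D,\beta)$ only finitely many values of $n$ can contribute.

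First, I would read off a lower bound on $n$. Since $\Coh_\mu^L(X)\subset \dD_\mu$ by definition, any $E\in \Coh_\mu^L(X)$ with $\ch(E)=(r,D,-\beta,-n)$ satisfies $\ch(E)\in \Gamma_{r,D}$ by Lemma~\ref{lem:fin}(i), which immediately gives
\begin{align*}
n\ge -l(r,D,-\beta).
\end{align*}
In particular, $\mathrm{L}(r,D,-\beta,-n)=0$ whenever $n<-l(r,D,-\beta)$.

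Next, I would use the dualizing functor $\mathbb{D}$ to produce an analogous upper bound. By Corollary~\ref{cor:inc}, $\mathbb{D}(E)\in \dD_{-\mu}$. Since $\ch_i(\mathbb{D}(E))=(-1)^i\ch_i(E)$ on a Calabi--Yau $3$-fold, the Chern character of $\mathbb{D}(E)$ equals $(r,-D,-\beta,n)$, so $(\ch_0,\ch_1)=(r,-D)$ (a pair which still satisfies the coprimality assumption~(\ref{coprime})). Applying Lemma~\ref{lem:fin}(i) to $\mathbb{D}(E)\in \dD_{-\mu}$ yields $\ch(\mathbb{D}(E))\in \Gamma_{r,-D}$, and the Langer-type bound in the definition of $\Gamma_{r,-D}$ reads
\begin{align*}
n\le l(r,-D,-\beta).
\end{align*}
Note that the Bogomolov part of the condition, which involves $(\pm D\omega^2)^2$, is insensitive to the sign flip and gives the same inequality on $\omega\beta$ as before, so no additional constraint on $\beta$ is required.

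Combining the two inequalities, for fixed $(r,D,\beta)$ the values of $n\in \tfrac{1}{6}\mathbb{Z}$ with $\mathrm{L}(r,D,-\beta,-n)\ne 0$ lie in the bounded interval $[-l(r,D,-\beta),\,l(r,-D,-\beta)]$, hence form a finite set. The generating series~(\ref{series:Ln}) is therefore a finite Laurent sum in $q^{1/6}$ with integer coefficients, as claimed. There is no serious obstacle in this argument: the only point to verify carefully is the sign rule $\ch_i(\mathbb{D}(E))=(-1)^i\ch_i(E)$ and the fact that the Bogomolov bound is symmetric under $D\leftrightarrow -D$, both of which are routine.
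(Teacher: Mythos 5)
Your argument is correct and is essentially the paper's own proof: the lower bound on $n$ comes from $\ch(E)\in\Gamma_{r,D}$ via Lemma~\ref{lem:fin}~(i), and the upper bound comes from applying the same lemma to $\mathbb{D}(E)\in\dD_{-\mu}$ (Corollary~\ref{cor:inc}), which is exactly the paper's statement that $(r,\pm D,-\beta,\mp n)\in\Gamma_{r,\pm D}$. Your additional checks of the sign rule for $\ch_i(\mathbb{D}(E))$ and the symmetry of the Bogomolov bound under $D\leftrightarrow -D$ are correct and only make the argument more explicit.
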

\begin{proof}
Suppose that $\mathrm{L}(r, D, -\beta, -n)\neq 0$. 
Then by Lemma~\ref{lem:fin} (i)
and Corollary~\ref{cor:inc},
we have
 $(r, \pm D, -\beta, \mp n) \in \Gamma_{r, \pm D}$. 
Hence
the definition of $\Gamma_{r, D}$
implies that 
$\mathrm{L}(r, D, -\beta, -n)=0$ for 
a fixed $(r, D, \beta)$ and 
$\lvert n \rvert \gg 0$. 
This implies that the series (\ref{series:Ln})
is a Laurent polynomial in $q^{\frac{1}{6}}$. 
\end{proof}

\subsection{Proof of rationality}
We define the following generating series
\begin{align*}
\mathrm{L}_{r, D}(q, t)
\cneq \sum_{\beta, n}
\mathrm{L}(r, D, -\beta, -n)q^n t^{\beta}. 
\end{align*}
The following result is a  
higher rank version of the 
product expansion formula given in~\cite{Tolim2}, \cite{BrH}: 
\begin{thm}\label{thm:PT=L}
We have the following product expansion formula: 
\begin{align*}
\PT_{r, D}(q, t)&=\mathrm{L}_{r, D}(q, t) \\
&\cdot 
\prod_{n\ge 0, \beta>0}
\exp\left(
N_{n, \beta}\{(-1)^r q\}^n \{(-1)^D t\}^{\beta} \right)^{D\beta-rn}. 
\end{align*}
\end{thm}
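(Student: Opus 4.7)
The plan is to derive Theorem~\ref{thm:PT=L} by producing two different nested-torsion-pair filtrations of the abelian category $\dD_{\mu}$ restricted to rank~$r$, divisor~$D$ numerical class, equating them in the Hall algebra, and then applying the integration map. This is in direct analogy with the derivation of Theorem~\ref{thm:DTPTseries}, but using Lemma~\ref{L:id} in place of Lemma~\ref{lem:cid} to trade $\Coh_{\mu}^{P}(X)$ for $\Coh_{\mu}^{L}(X)$.

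First I would combine Lemma~\ref{lem:cid} with the defining decomposition $\dD_{\mu}=\langle \Coh_{\mu}(X),\cC_{\infty},\cC_{[0,\infty)}\rangle$ to get the PT-type filtration $\dD_{\mu}=\langle \cC_{\infty},\Coh_{\mu}^{P}(X),\cC_{[0,\infty)}\rangle$. On the other hand, Lemma~\ref{L:id} together with the HN decomposition $\cC_{[0,\infty]}=\langle \cC_{\infty},\cC_{[0,\infty)}\rangle$ yields the L-type filtration $\dD_{\mu}=\langle \cC_{\infty},\cC_{[0,\infty)},\Coh_{\mu}^{L}(X)\rangle$. Taking the sub of $\dD_{\mu}$ of numerical class $(r,D,\ast,\ast)$ and using the uniqueness of the associated filtration from Remark~\ref{rmk:filt}, both filtrations compute the same element of $\widehat{H}_{r,D}(\aA_{\mu})$, yielding the Hall-algebra identity
\begin{align*}
\delta(\cC_{\infty})\ast \delta_{\PT}(r,D)\ast \delta(\cC_{[0,\infty)})
=\delta(\cC_{\infty})\ast \delta(\cC_{[0,\infty)})\ast \delta_{\rm{L}}(r,D).
\end{align*}
Cancelling $\delta(\cC_{\infty})$ on the left and rewriting via $\delta(\cC_{[0,\infty)})=\exp(\epsilon(\cC_{[0,\infty)}))$ and the Baker–Campbell–Hausdorff formula gives
$\delta_{\PT}(r,D)=\exp\bigl(\mathrm{Ad}(\epsilon(\cC_{[0,\infty)}))\bigr)\circ \delta_{\rm{L}}(r,D)$.

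Next, following the passage from (\ref{del:DTPT}) to (\ref{de:id}), I would multiply by $(\mathbb{L}-1)$, use Theorem~\ref{thm:pole} to project to the semiclassical quotient, and apply the integration map $I_{r,D}$ from (\ref{integ}) to obtain the Poisson-bracket identity
\begin{align*}
-\sum_{v\in \Gamma_{r,D}}\PT(v)\,c_{v}
=\exp\!\Bigl(\bigl\{I_{\sharp}(\overline{\epsilon}(\cC_{[0,\infty)})),\,-\bigr\}\Bigr)
\circ\Bigl(-\sum_{v\in \Gamma_{r,D}}\mathrm{L}(v)\,c_{v}\Bigr)
\end{align*}
in $\widehat{C}_{r,D}(X)$. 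Lemma~\ref{standard} identifies $I_{\sharp}(\overline{\epsilon}(\cC_{[0,\infty)}))=-\sum_{n\ge 0,\beta>0}N_{n,\beta}\,c_{(0,0,-\beta,-n)}$ in the notation~(\ref{Nnb}).

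Finally I would evaluate the exponential of the Poisson-adjoint action explicitly. The crucial simplification is that any two classes $v_1,v_2\in \Gamma_{\sharp}$ satisfy $\chi(v_1,v_2)=0$ by (\ref{RR}), so the sharp basis elements Poisson-commute and the iterated bracket reduces to a simple product: by (\ref{RR}) again,
\begin{align*}
\{c_{(0,0,-\beta,-n)},\,c_{(r,D,-\beta',-n')}\}
=(-1)^{rn-D\beta}(rn-D\beta)\,c_{(r,D,-\beta-\beta',-n-n')},
\end{align*}
and this repeats without correction at each application. Summing the resulting geometric contributions over $k$ produces an exponential factor on the generating series, and the independence of different $(n,\beta)$ walls turns it into the claimed product. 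Rewriting $(-1)^{rn-D\beta}q^{n}t^{\beta}=\{(-1)^{r}q\}^{n}\{(-1)^{D}t\}^{\beta}$ and absorbing $(D\beta-rn)$ into the exponent yields
\begin{align*}
\PT_{r,D}(q,t)=\mathrm{L}_{r,D}(q,t)\cdot
\prod_{n\ge 0,\beta>0}
\exp\!\bigl(N_{n,\beta}\{(-1)^{r}q\}^{n}\{(-1)^{D}t\}^{\beta}\bigr)^{D\beta-rn}.
\end{align*}
The main technical obstacle is the bookkeeping in this last step: one must verify that nested Poisson brackets involving $k$ distinct sharp classes combine correctly (the Leibniz rule together with the vanishing of $\chi$ on $\Gamma_{\sharp}\times\Gamma_{\sharp}$ is what makes it work), and that the generating-series manipulations are well-defined in the completed Poisson algebra $\widehat{C}_{r,D}(X)$, which follows from the finiteness assertions of Lemma~\ref{lem:fin}.
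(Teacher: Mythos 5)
Your proposal is correct and follows essentially the same route as the paper: the categorical identity $\langle \Coh_{\mu}^P(X), \cC_{[0,\infty)}\rangle = \langle \cC_{[0,\infty)}, \Coh_{\mu}^{L}(X)\rangle$ from Lemmas~\ref{lem:cid} and~\ref{L:id} (your version with the extra $\delta(\cC_{\infty})$ factor cancelled is equivalent), the passage to the semiclassical quotient via $(\mathbb{L}-1)$ and Theorem~\ref{thm:pole}, and the integration map combined with Lemma~\ref{standard} and the computation (\ref{RR}). Your explicit verification that the vanishing of $\chi$ on $\Gamma_{\sharp}\times\Gamma_{\sharp}$ collapses the nested brackets into a product is exactly the bookkeeping the paper leaves implicit in the phrase ``the same argument deducing (\ref{de:id})''.
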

\begin{proof}
The results of 
Lemma~\ref{lem:cid} and Lemma~\ref{L:id}
imply 
\begin{align*}
\langle \Coh_{\mu}^P(X), \cC_{[0, \infty)} \rangle
=\langle \cC_{[0, \infty)}, \Coh_{\mu}^{\mathrm{L}}(X) \rangle. 
\end{align*}
We obtain the following identity in 
$\widehat{H}_{r, D}(\aA_{\mu})$
\begin{align*}
\delta_{\PT}(r, D) \ast \delta(\cC_{[0, \infty)})
=\delta(\cC_{[0, \infty)}) \ast
\delta_{\mathrm{L}}(r, D). 
\end{align*}
Therefore by the same argument deducing (\ref{de:id}) 
in the proof of Theorem~\ref{thm:DTPTseries},  
we obtain the following identity in 
$\widehat{H}_{r, D}^{\rm{sc}}(\aA_{\mu})$: 
\begin{align}\notag
\overline{\delta}_{\PT}(r, D) 
=\exp\left(\mathrm{Ad}^{\rm{sc}}(\overline{\epsilon}(\cC_{[0, \infty)}))
 \right) \circ 
\overline{\delta}_{\mathrm{L}}(r, D).
\end{align}
By applying the integration map $I_{r, D}$ in (\ref{integ}), 
using Lemma~\ref{standard} and the computation (\ref{RR}),
we obtain the formula
\begin{align}\notag
\PT_{r, D}(q, t)
&=\mathrm{L}_{r, D}(q, t) \\
\label{rat:form}&\cdot \exp\left(
\sum_{n\ge 0, \beta>0}
(-1)^{rn+D\beta-1}(rn-D\beta)N_{n, \beta}q^n t^{\beta}
\right). 
\end{align}
Therefore we obtain the desired result. 
\end{proof}
The result of Theorem~\ref{intro:rational}
is a corollary of the above result:
\begin{cor}\label{cor:rat}
Theorem~\ref{intro:rational} holds. 
\end{cor}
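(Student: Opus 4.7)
The plan is to extract the coefficient of $t^\beta$ from the product formula of Theorem~\ref{thm:PT=L} and combine the rationality of the $N$-invariant series (Lemma~\ref{lem:Nrational}) with the Laurent polynomial structure of the $\mathrm{L}$-invariant series (Lemma~\ref{lem:Lpoly}) to produce the desired factorization.

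First I would write the exponential factor in Theorem~\ref{thm:PT=L} as $E(q,t)=\prod_{\beta'>0}\exp(A_{\beta'}(q)\,t^{\beta'})$, where
\begin{align*}
A_{\beta'}(q)=(-1)^{D\beta'-1}\sum_{n\ge 0}(rn-D\beta')\,N_{n,\beta'}\,((-1)^{r}q)^{n}
\end{align*}
is a rational function of $q$ by Lemma~\ref{lem:Nrational}. Since $\omega$ is integral and ample, every effective class $\beta'$ satisfies $\omega\cdot\beta'\ge 1$, so for the fixed class $\beta$ only finitely many decompositions $\beta=\sum_i k_i\beta_i'$ into effective summands $\beta_i'>0$ are possible. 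Expanding the product of exponentials, each coefficient $E_{\beta_2}(q):=[t^{\beta_2}]E(q,t)$ is then a finite polynomial expression in finitely many $A_{\beta'}(q)$'s, hence is itself a rational function of $q$.

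By the same finiteness, extracting $t^\beta$ from $\PT_{r,D}(q,t)=\mathrm{L}_{r,D}(q,t)\cdot E(q,t)$ yields the finite convolution
\begin{align*}
\PT_\beta(q):=\sum_{6n\in\mathbb{Z}}\PT(r,D,-\beta,-n)\,q^{n}=\sum_{\beta_1+\beta_2=\beta}\mathrm{L}_{\beta_1}(q)\cdot E_{\beta_2}(q),
\end{align*}
where $\mathrm{L}_{\beta_1}(q):=\sum_n \mathrm{L}(r,D,-\beta_1,-n)\,q^n$ is a Laurent polynomial in $q^{1/6}$ with integer coefficients by Lemma~\ref{lem:Lpoly}. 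Finally I would choose a common denominator $D(q)\in\mathbb{Z}[q,q^{-1}]$ for the finitely many rational functions $E_{\beta_2}(q)$ appearing. Then $D(q)\cdot\PT_\beta(q)=\sum_{\beta_1+\beta_2=\beta}\mathrm{L}_{\beta_1}(q)\cdot(DE_{\beta_2})(q)$ is manifestly a Laurent polynomial in $q^{1/6}$, and it has integer coefficients because $D(q)\in\mathbb{Z}[q,q^{-1}]$ and every $\PT(r,D,-\beta,-n)$ is an integer. Setting $F(q)=1/D(q)$ and $G(q^{1/6})=D(q)\cdot\PT_\beta(q)$ then yields the asserted factorization.

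The hard part will be the dual requirement on $G$: it must simultaneously be a Laurent polynomial (which forces the choice of $D$ to clear all denominators of the rational functions $E_{\beta_2}$) and have integer coefficients (which forces $D\in\mathbb{Z}[q,q^{-1}]$ and exploits the a priori integrality of the PT invariants). Everything else is direct bookkeeping from Theorem~\ref{thm:PT=L} combined with the two preparatory lemmas.
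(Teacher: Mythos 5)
Your argument is correct and follows exactly the route the paper takes: its proof of Corollary~\ref{cor:rat} is a one-line deduction from the product formula (\ref{rat:form}) of Theorem~\ref{thm:PT=L} combined with Lemma~\ref{lem:Nrational} and Lemma~\ref{lem:Lpoly}, which is precisely the coefficient-of-$t^{\beta}$ bookkeeping you carry out. Your explicit choice of a common denominator $D(q)\in\mathbb{Z}[q,q^{-1}]$ and the observation that integrality of $G$ comes from the a priori integrality of the PT invariants (rather than from the rational numbers $N_{n,\beta}$) just makes explicit what the paper leaves implicit.
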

\begin{proof}
This is a consequence of the formula (\ref{rat:form})
together with Lemma~\ref{lem:Nrational} and Lemma~\ref{lem:Lpoly}. 
\end{proof}

\bibliographystyle{amsalpha}
\bibliography{math}

Kavli Institute for the Physics and 
Mathematics of the Universe, University of Tokyo,
5-1-5 Kashiwanoha, Kashiwa, 277-8583, Japan.

\textit{E-mail address}: yukinobu.toda@ipmu.jp

\end{document}